\newtheorem{theorem}{Theorem}[section]
\newtheorem{lemma}[theorem]{Lemma}
\newtheorem{cor}[theorem]{Corollary}
\newtheorem{definition}[theorem]{Definition}
\newcommand{\ls}{\setbox0\hbox{$-$}
\mathbin{\hbox{$-$\kern-\wd0\raise2\dp0\hbox{$\cdot$}\kern.3\wd0\lower2\dp0\hbox{$\cdot$}}}}
\newcommand{\rs}{\setbox0\hbox{$-$}
\mathbin{\hbox{$-$\kern-\wd0\lower2\dp0\hbox{$\cdot$}\kern.3\wd0\raise2\dp0\hbox{$\cdot$}}}}
\renewcommand{\ln}{\mathord{\sim}}
\newcommand{\rn}{\mathord{-}}
\def\ld{\mathord{\backslash}}
\def\rd{\mathord{/}}
\def\up{\mathord{\uparrow}}
\title[Kites and Pseudo BL-algebras]{Kites and Pseudo BL-algebras}
\date{\today}
\author[A. Dvure\v censkij]{Anatolij Dvure\v censkij$^1$}
\address{$^1$Mathematical Institute, Slovak Academy of Sciences,
\v Stef\'anikova 49, SK--814~73 Bratislava, Slovakia.  Department
of Algebra and Geometry, Faculty of Sciences, Palack\'{y}
University, t\v{r}. 17. listopadu 1192/12, CZ-771 46 Olomouc, Czech
Republic}
\email{dvurecenskij@mat.savba.sk}
\author[T. Kowalski]{Tomasz Kowalski$^2$}
\address{$^2$The University of Melbourne,
Department of Mathematics and Statistics,
Parkville, VIC 3010, Australia}
\email{tomaszk@unimelb.edu.au}
\thanks{The paper has been supported  by the
Center of Excellence SAS -~Quantum Technologies,   meta-QUTE ITMS
26240120022, the grant VEGA No. 2/0059/12 SAV, and by
CZ.1.07/2.3.00/20.0051.}
\subjclass[2010]{Primary 06D35; Secondary 03G12, 03B50}
\keywords{Keywords: MV-algebra, pseudo MV-algebra, pseudo
BL-algebra,  $\ell$-group, unital $\ell$-group, kite, subdirectly irreducible kite, Scrimger
group.}
\begin{document}

\begin{abstract}
We investigate a construction of a pseudo BL-algebra out of an
$\ell$-group called a kite. We show that many well-known examples of algebras
related to fuzzy logics can be obtained in that way. We describe subdirectly irreducible kites. As another
application, we exhibit a new countably infinite family of varieties
of pseudo BL-algebras covering the variety of Boolean algebras.
\end{abstract}

\maketitle

\section{Introduction}

Lattice-ordered groups ($\ell$-groups) are intimately connected with
certain algebras related to fuzzy logics. Indeed, the discovery of
one such connection predates fuzzy logics, as it was made by Chang
\cite{Cha} in his algebraic proof of completeness of
infinitely-valued \L{u}kasiewicz's logic. Much later Mundici
\cite{Mun} proved a categorical equivalence between the variety of
MV-algebras and the class of Abelian $\ell$-groups with strong unit.
This was extended by Dvure\v{c}enskij \cite{Dvu1} to an equivalence
between the variety of \emph{pseudo MV-algebras} (a noncommutative
generalization of MV-algebras) and the class of all $\ell$-groups
with strong unit. For \emph{BL-algebras}, which constitute an
algebraic semantics of classical fuzzy logic, the $\ell$-group
connection was investigated by Aglian\`o and Montagna \cite{AgMo}
who proved that all linearly ordered BL-algebras consist of building blocks that are
either negative cones of $\ell$-groups, or negative intervals in unital
$\ell$-groups. A similar result was proved by Dvure\v{c}enskij for
representable (i.e., such that subdirectly irreducibles are linearly
ordered) pseudo BL-algebras (cf. \cite{Dvu2}). Jipsen and Montagna
\cite{JiMo} constructed a subdirectly irreducible pseudo BL-algebra,
which is not linearly ordered, yet it is made out of an $\ell$-group
in a rather special way, which may be visualized as resembling a
kite: a two-dimensional head joined to a one-dimensional tail. In
this paper we generalize their construction, and show that many
well-known examples of the algebras mentioned above can be seen as
particular cases of the generalized construction. As our
generalization consists essentially in allowing more dimensions for
the head and tail, we call our algebras \emph{kites}.

The paper is organized as follows. Basic notions and notations are in Section 2. Section 3 defines kites, they are always pseudo BL-algebras. It gathers     the main properties of kites. In particular, some kites are pseudo MV-algebras. Section 4 presents a list of important kites. Subdirectly irreducible kites and their classification  are described in Section 5. Very important kites are finite-dimensional ones which are studied in Section 6, where it is shown that the variety of pseudo BL-algebras generated by all kites is generated by all finite-dimensional kites. Finally, Section 7 shows some applications of theory of kites, in particular,  countably many covers generated by kites of the variety of Boolean algebras in the variety of pseudo BL-algebras are presented.

\section{Basic notions and notation}

In terminology and notation we will follow~\cite{GJKO}, which is
also our standard reference for undefined notions and details. All
classes (varieties) of algebras we deal with in the paper can be
viewed as subclasses (subvarieties) of \emph{FL-algebras}, that is,
algebras $\mathbf{A} = \langle
A;\wedge,\vee,\ld,\rd,\cdot,0,1\rangle$ of type $(2,2,2,2,2,0,0)$
such that
\begin{itemize}
\item $\langle A;\wedge,\vee\rangle$ is a lattice,
\item $\langle A;\cdot,\ld,\rd, 1\rangle$ is a residuated monoid,
\item $0$ is an element of $A$.
\end{itemize}
The operations $\ld$ and $\rd$ are called, respectively,
\emph{left division} (or \emph{right residuation}) and
\emph{right division} (or \emph{left residuation}).
Two unary operations: \emph{left negation} $\ln x = x\ld 0$, and
\emph{right negation} $\rn x = 0\rd x$, are commonly used, and will play a
role in the paper.
Negations bind stronger than multiplication, which binds stronger than
divisions, which in turn
bind stronger than the lattice connectives.
The following identities will be of some importance later.
\begin{enumerate}
\item\label{integr} $1\geq x$,
\item\label{0-bound} $0\leq x$,
\item\label{rl} $1 = 0$,
\item\label{divis} $x(x\ld (x\wedge y)) = x\wedge y = ((x\wedge y)\rd x)x$,
\item\label{div-int} $x(x\ld y) = x\wedge y = (y\rd x)x$,
\item\label{prelin} $x\ld y \vee y\ld x = 1 = y\rd x \vee x\rd y$,
\item\label{MV-int} $x\rd(y\ld x) = x\vee y = (x\rd y)\ld x$,
\item\label{MV-gen} $x\rd((x\vee y)\ld x) = x\vee y = (x\rd(x\vee y))\ld x$,
\item\label{dbl-neg} $\ln\rn x = x = \rn\ln x$,
\item\label{good} $\ln\rn x = \rn\ln x$,
\item\label{lg} $1 = x(x\ld 1)$,
\item\label{comm} $xy = yx$.
\end{enumerate}
We write $\mathsf{FL}$ for the variety of all FL-algebras. In
general, if X-algebras are defined and happen to form a variety, we
will use sans-serif $\mathsf{X}$ for that variety. Thus,
$\mathsf{FL_i}$ is the variety of $\text{FL}_i$-algebras: these are
FL-algebras satisfying (\ref{integr}); they are also known as
\emph{integral}. FL-algebras satisfying (\ref{0-bound}) are called
$\text{FL}_o$-algebras, or \emph{zero-bounded}. FL-algebras that are
both integral and zero-bounded are known as $\text{FL}_w$-algebras.
FL-algebras satisfying (\ref{rl}) are \emph{residuated lattices};
this trick allows us to make the constant $0$ disappear; it is
especially useful for viewing lattice-ordered groups ($\ell$-groups)
as a variety of FL-algebras.

The identities (\ref{divis}) and (\ref{div-int}) are both known as
\emph{divisibility}; in varieties of integral FL-algebras
(\ref{divis}) is equivalent to (\ref{div-int}), but not so in
general. FL-algebras satisfying divisibility in the form
(\ref{divis}) are \emph{GBL-algebras}; integral GBL-algebras are
typically defined using (\ref{div-int}) instead. This has the
advantage of making lattice meet definable by means of (either of)
the divisions. As integrality is derivable from (\ref{div-int}), it
is a natural and economical choice of an equational base. For more
on $\mathsf{GBL}$ we refer the reader to~\cite{GaTs,JiMo}. The
identities in (\ref{prelin}) are together known as
\emph{prelinearity}. $\text{FL}_w$-algebras satisfying prelinearity
and divisibility are \emph{pseudo BL-algebras} (cf.
e.g.,~\cite{DGI1,DGI2}). This variety is of prime importance in this
paper, since the kites from the title are certain pseudo BL-algebras
constructed out of $\ell$-groups in a special way. We use
$\mathsf{psBL}$ as a name for the variety of pseudo BL-algebras.

An important subvariety of $\mathsf{psBL}$ is the variety
$\mathsf{psMV}$ of \emph{pseudo MV-algebras}: these are pseudo
BL-algebras satisfying (\ref{MV-int}), or,
equivalently\footnote{Over pseudo BL-algebras; the equivalence  does
not hold in general.}, pseudo BL-algebras satisfying
(\ref{dbl-neg}). Pseudo MV-algebras are categorically equivalent to
the class of $\ell$-groups with a strong unit, as shown in
\cite{Dvu1}. The role of (\ref{MV-gen}) with respect to
(\ref{MV-int}) is analogous to that of (\ref{divis}) with respect to
(\ref{div-int}), namely, (\ref{MV-gen}) it is a non-integral version
of (\ref{MV-int}). In particular, (\ref{MV-gen}) holds in
$\ell$-groups, while (\ref{MV-int}) does not. For this to make
sense, we need to interpret $\ell$-groups as residuated lattices,
and this is done with the help of (\ref{lg}). Namely, the subvariety
of residuated lattices satisfying (\ref{lg}) is term equivalent to
the variety of $\ell$-groups, upon defining $x^{-1} = x\ld 1$ one
way, and $x\ld y = x^{-1}y$, $x\rd y = xy^{-1}$ the other.

Pseudo BL-algebras satisfying (\ref{good}), a natural weakening of
(\ref{dbl-neg}), are known by an unassuming name of \emph{good}. For
example, every pseudo MV-algebra is good. It was an open question
for some time whether every pseudo BL-algebra was good
(cf.~\cite[Problem 3.21]{DGI2}). It was resolved in the negative
in~\cite{DGK}, and the counterexample was in fact a special type of
what we now call a kite, defined in~\cite{JiMo}.

Finally, commutative pseudo BL-algebras are just \emph{BL-algebras}
and commutative pseudo MV-algebras are \emph{MV-algebras}. This
reflects the order of discovery: ``pseudo'' varieties\footnote{Not
pseudovarieties, which are classes closed
  under finite direct products, subalgebras and homomorphic images.}
were discovered as noncommutative generalizations of BL- and
MV-algebras, respectively.

Let $\mathbf{A}$ be an FL-algebra.  Given $a \in A,$ we define
$a^1:=a$ and $a^{n+1}:=a^na,$ for $n \ge 1.$ An element $a\in A$ is
said to be (i) \emph{idempotent} if $a^2 = a$, (ii) \emph{Boolean}
if it is idempotent and $\rn\ln a=a=\ln\rn a$. Let $B(\mathbf A)$ be
the set of Boolean elements of $\mathbf A.$  It is the greatest Boolean subalgebra
of $\mathbf A.$ If $\mathbf A$ is a BL-algebra, by \cite[Prop
2.10]{DGI2}, an element $a\in A$ is Boolean if{}f $a \vee \ln a =1$
if{}f $a\vee \rn a$. Then $\rn a= \ln a$.

We now recall some basic facts about
arithmetical and structural properties of FL-algebras, residuated lattices and
pseudo BL-algebras that we wish to use freely later.

\begin{lemma}\label{arith}
Let $\mathbf{A}$ be an FL-algebra. Then the equivalences
$$
x\leq z\rd y \text{\quad if{}f\quad} xy\leq z \text{\quad
if{}f\quad} y\leq x\ld z
$$
hold for any $x,y,z\in A$. If $\mathbf{A}$ is integral, then $xy
\leq x\wedge y$ always holds, so in particular $x^2\leq x$; moreover
$x\ld y = 1 = y\rd x$ hold if{}f  $x\leq y$. If $\mathbf{A}$ is
divisible and $x\geq y$, there exist elements $z_1$ and $z_2$ such
that $xz_1 = y = z_2x$; in fact $z_1 = x\ld y$ and $z_2 = y\rd x$.
If $\mathbf{A}$ is divisible and $z$ is idempotent, then $xz =
x\wedge z = zx$ holds for any $x\in A$.
\end{lemma}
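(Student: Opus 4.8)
The plan is to dispatch the four assertions in turn; each is a short derivation from the definitions together with the identities quoted in the statement, so there is essentially nothing to ``compute''.

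First, the chain ``$x\le z\rd y$ if and only if $xy\le z$ if and only if $y\le x\ld z$'' is nothing but the defining adjunction between $\cdot$ and its two residuals, that is, the very assertion that $\langle A;\cdot,\ld,\rd,1\rangle$ is a residuated monoid; I would record this first and use it silently below. Next, assuming integrality~(\ref{integr}), so that $u\le 1$ for every $u$, isotonicity of multiplication in each argument gives $xy\le x\cdot 1=x$ and $xy\le 1\cdot y=y$, hence $xy\le x\wedge y$, and with $y=x$ also $x^{2}\le x$. For the ``moreover'' clause: if $x\le y$, then $1\cdot x=x\le y$, so by the adjunction $1\le x\ld y$, and integrality forces $x\ld y=1$; symmetrically $y\rd x=1$. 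Conversely, $x\ld y=1$ yields $1\le x\ld y$, hence $x=x\cdot 1\le y$ by the adjunction again.

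For the third assertion, assume divisibility. If $x\ge y$, then $x\wedge y=y$, so instantiating~(\ref{divis}) at the pair $x,y$ (note $x\ld(x\wedge y)=x\ld y$ here) gives $x(x\ld y)=y$ and, likewise, $(y\rd x)x=y$; thus $z_{1}:=x\ld y$ and $z_{2}:=y\rd x$ are the required elements.

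The fourth assertion is the only one carrying any content, and even there the trick is tiny. Let $z$ be idempotent, $z^{2}=z$, and let $x$ be arbitrary; I work with divisibility in its integral form~(\ref{div-int}), so that $z\le 1$ and the second paragraph applies, giving $xz\le x\wedge z$ and $zx\le x\wedge z$. For the reverse inequalities, factor $x\wedge z=(x\rd z)z$ and $x\wedge z=z(z\ld x)$ via~(\ref{div-int}). From the adjunction $(x\rd z)z\le x$, so multiplying on the right by $z$ and using associativity together with $z^{2}=z$ gives $(x\rd z)z=(x\rd z)z^{2}=\bigl((x\rd z)z\bigr)z\le xz$, that is, $x\wedge z\le xz$; symmetrically, from $z(z\ld x)\le x$ we get $x\wedge z=z(z\ld x)=z\bigl(z(z\ld x)\bigr)\le zx$. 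Hence $xz=x\wedge z=zx$. The single point worth noticing — and the whole of the ``obstacle'' — is that, once $x\wedge z$ has been written as a product with a factor $z$, idempotency lets one re-absorb an extra copy of $z$ for free. It is also where some care is due, since the clause genuinely needs integrality: in a nontrivial $\ell$-group $z^{2}=z$ forces $z=1$, yet $x\wedge 1\ne x$ whenever $x\not\le 1$, so without integrality the last clause fails.
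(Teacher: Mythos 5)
Your proof is correct. The paper itself gives no proof of Lemma~\ref{arith} -- it is stated as a collection of standard facts to be used freely later -- so there is no authorial argument to compare against; your derivations (residuation adjunction, isotonicity plus integrality, instantiation of (\ref{divis}) at $x\wedge y=y$, and the re-absorption of an extra factor $z$ via $z^2=z$ in the divisibility identity (\ref{div-int})) are exactly the routine verifications the authors are taking for granted. Your closing caveat is also well taken: the last clause must be read with divisibility in the integral form (\ref{div-int}) (or with integrality assumed), since $\ell$-groups satisfy (\ref{divis}) yet violate $xz=x\wedge z$ for $z=1$; this reading is consistent with how the paper uses the clause, namely for (integral) pseudo BL-algebras to conclude that idempotents are central.
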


Let $\mathbf{A}$ be an FL-algebra. A \emph{left conjugate} of an
element $x\in A$ by an element $y\in A$ is the element $y\ld
xy\wedge 1$, and its \emph{right conjugate} is $yx\rd y\wedge 1$. An
element $x\in A$ is \emph{central} if $yx = xy$ holds for all $y\in
A$. By the last statement of Lemma~\ref{arith} all idempotent
elements in pseudo BL-algebras are central. A \emph{filter} of
$\mathbf{A}$ is a set $F\subseteq A$ such that (i) $F$ is a
subalgebra of $0$-free reduct of $\mathbf{A}$ and (ii) $F$ is convex
as an ordered set.   A filter $F$ is normal if it is closed under
conjugates, i.e., for all $x\in F$ and all $y\in A$ both $y\ld xy$
and $yx\rd y$ belong to $F$. Normal filters are also called
\emph{convex normal subalgebras} (e.g. in~\cite{GJKO}), which makes
good sense for residuated lattices but can be confusing for
FL-algebras: convex normal subalgebras may not be subalgebras in the
proper sense because they do not need to contain $0$. If
$\mathbf{A}$ is integral, filters can be alternatively defined as
subsets of $A$ that are upward closed and closed under
multiplication. For a set $S\subseteq A$ we denote its upward
closure by $\up S$. If $S = \{s\}$ we write $\up s$ instead of
$\up\{s\}$.

\begin{lemma}\label{cong}
Let $\mathbf{A}$ be an FL-algebra. For each congruence $\theta$ on $\mathbf{A}$,
its class $1/\theta$ is a normal filter. Conversely, each normal filter $F$
corresponds to a unique congruence $\theta_F$ on $\mathbf{A}$. This
correspondence establishes a lattice isomorphism
between the lattices of normal filters of $\mathbf{A}$ and
congruences of $\mathbf{A}$. If $\mathbf{A}$ is integral and $z\in A$ is
idempotent and central, then $\up z$ is a normal filter. If $\mathbf{A}$ is,
moreover, divisible and $y\in A$ is idempotent, then $\up y$ is normal.
\end{lemma}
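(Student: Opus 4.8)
The plan is to split the statement into two parts: the classical correspondence between normal filters and congruences, and the two assertions that certain upsets of idempotents are normal filters. For the first part I would follow the standard development (as in~\cite{GJKO}). Given a congruence $\theta$, the class $1/\theta$ is closed under $\wedge,\vee,\cdot,\ld,\rd$ --- for instance $a,b\in 1/\theta$ gives $a\ld b\mathrel{\theta}1\ld 1=1$ --- and it is convex, since from $a\le c\le b$ with $a\mathrel{\theta}b$ one gets $c=b\wedge c\mathrel{\theta}a\wedge c=a$; normality holds because the left conjugate $y\ld xy\wedge 1$ is $\theta$-related to $(y\ld y)\wedge 1=1$ (using $y\ld y\ge 1$), and symmetrically for the right conjugate. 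Conversely, given a normal filter $F$, one sets $a\mathrel{\theta_F}b$ iff $(a\ld b)\wedge 1$ and $(b\ld a)\wedge 1$ both lie in $F$; reflexivity uses $1\in F$, transitivity uses closure of $F$ under $\cdot$ together with convexity, and compatibility with $\wedge$, $\vee$, $\cdot$ is routine. The delicate point, which I expect to be the main obstacle, is compatibility with the two divisions: this is precisely where closure of $F$ under conjugation is needed, and it is the technical heart of the argument (carried out in detail in~\cite{GJKO}). One then checks $1/\theta_F=F$ and $\theta_{1/\theta}=\theta$, and notes that both assignments preserve inclusion, so they constitute a lattice isomorphism of the two lattices.

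For the integral case with $z$ idempotent and central, $\up z$ is upward closed by definition, and if $a,b\ge z$ then $ab\ge z\cdot z=z$, since multiplication is order-preserving (a standard consequence of residuation) and $z$ is idempotent; as the algebra is integral, this shows $\up z$ is a filter. For normality, recall that in an integral algebra the conjugates of $z$ by $y$ are $y\ld zy$ and $yz\rd y$; by the residuation equivalences of Lemma~\ref{arith}, $z\le y\ld zy$ is equivalent to $yz\le zy$ and $z\le yz\rd y$ is equivalent to $zy\le yz$, and both hold because $z$ is central. Hence both conjugates of $z$ belong to $\up z$, so $\up z$ is a normal filter.

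Finally, if $\mathbf{A}$ is moreover divisible and $y$ is idempotent, then by the last clause of Lemma~\ref{arith} we have $xy=x\wedge y=yx$ for every $x\in A$, so $y$ is central and the preceding paragraph applies verbatim, giving that $\up y$ is a normal filter. Thus, apart from the classical correspondence, the only new computation is the short monotonicity-and-residuation argument of the last two paragraphs.
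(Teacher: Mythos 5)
The paper states this lemma without proof, recalling it as a known fact with~\cite{GJKO} as the reference, so there is no in-paper argument to compare against; your reconstruction follows exactly the standard route that citation points to (the class $1/\theta$ is a convex subalgebra of the $0$-free reduct closed under the conjugates $y\ld xy\wedge 1$ and $yx\rd y\wedge 1$; conversely $a\,\theta_F\,b$ if{}f $(a\ld b)\wedge 1$ and $(b\ld a)\wedge 1$ lie in $F$, with conjugation carrying the compatibility with $\ld$ and $\rd$), and your handling of the two upset claims by residuation and centrality is the intended argument. Incidentally, your use of the conjugates \emph{with} the meet $\wedge 1$ is the right reading: the paper's sentence defining normality omits the $\wedge 1$, but under that literal reading even $\{1\}$ need not be normal in a non-integral FL-algebra (since $y\ld y$ can exceed $1$), so the conjugates as you use them --- the meet being redundant in the integral case anyway --- are what is meant.

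One small step you should make explicit: normality of $\up z$ requires that the conjugates of \emph{every} $x\in\up z$ lie in $\up z$, not only the conjugates of $z$ itself, and your closing sentence jumps over this. It is immediate from what you proved, because conjugation is monotone in the conjugated element: if $x\geq z$ then $y\ld xy\geq y\ld zy\geq z$ and $yx\rd y\geq yz\rd y\geq z$, and $\up z$ is an upset. With that line added the argument is complete; the final reduction of the divisible case to the central case via the last clause of Lemma~\ref{arith} is correct, since the ``moreover'' in the statement retains integrality from the preceding sentence.
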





\section{Kites}

Let $\mathbf{G}$ be an $\ell$-group, and $I$, $J$ be sets with
$|J|\leq |I|$. Since only the cardinalities of $I$ and $J$ matter
for the construction, it is harmless to think of these sets as
ordinals. We will do so explicitly in the next section. Let further
$\lambda,\rho\colon J\to I$ be injections. Now we define an algebra
with the universe $(G^+)^J \uplus (G^-)^I$. We order its universe by
keeping the original coordinatewise ordering within $(G^+)^J$ and
$(G^-)^I$, and setting $x\leq y$ for all $x\in(G^+)^J$,
$y\in(G^-)^I$. It is easy to verify that this is a (bounded) lattice
ordering of $(G^+)^J \uplus (G^-)^I$. Notice also that the case
$I=J$ is not excluded, so the element $e^I$ may appear twice: at the
bottom of $(G^+)^J$ and at the top of $(G^-)^I$. To avoid confusion
in the definitions below, we adopt a convention of writing
$a_i^{-1},b_i^{-1}, \dots$ for elements of $(G^-)^I$ and
$f_j,g_j,\dots$ for elements of $(G^+)^J$. In particular, we will
write $e^{-1}$ for $e$ as an element of $G^-$. We also put $1$ for
the constant sequence $(e^{-1})^I$ and $0$ for the constant sequence
$e^J$. With these conventions in place we are ready to define
multiplication, putting:
\begin{align*}
\langle a_i^{-1}\colon i\in I\rangle\cdot\langle b_i^{-1}\colon i\in I\rangle &=
  \langle(b_ia_i)^{-1}\colon i\in I\rangle\\
\langle a_i^{-1}\colon i\in I\rangle\cdot\langle f_j\colon j\in J\rangle &=
  \langle a_{\lambda(j)}^{-1}f_j\vee e\colon j\in J\rangle\\
\langle f_j\colon j\in J\rangle\cdot\langle a_i^{-1}\colon i\in I\rangle &=
  \langle f_ja_{\rho(j)}^{-1}\vee e\colon j\in J\rangle\\
\langle f_j\colon j\in J\rangle\cdot\langle g_j\colon j\in J\rangle &=
  \langle e\colon j\in J\rangle = 0.\\
\end{align*}

\begin{definition}\label{resid}
Divisions, $\rd$ and $\ld$, corresponding to multiplication defined
as above on $(G^+)^J \uplus (G^-)^I$  are defined by:
\begin{align*}
\langle a_i^{-1}\colon i\in I\rangle\ld\langle b_i^{-1}\colon i\in I\rangle &=
 \langle a_ib_i^{-1}\wedge e^{-1}\colon i\in I\rangle\\
\langle b_i^{-1}\colon i\in I\rangle\rd\langle a_i^{-1}\colon i\in I\rangle &=
 \langle b_i^{-1}a_i\wedge e^{-1}\colon i\in I\rangle\\
\langle a_i^{-1}\colon i\in I\rangle\ld\langle f_j\colon j\in J\rangle &=
 \langle a_{\lambda(j)}f_j\colon j\in J\rangle\\
\langle f_j\colon j\in J\rangle\rd\langle a_i^{-1}\colon i\in I\rangle &=
 \langle f_ja_{\rho(j)}\colon j\in J\rangle\\
\langle f_j\colon j\in J\rangle\ld\langle g_j\colon j\in J\rangle &= \langle a_i^{-1}\colon i\in I\rangle,\\
\text{ where } a_i^{-1} &=\begin{cases}
f_{\rho^{-1}(i)}^{-1}g_{\rho^{-1}(i)}\wedge e^{-1} &
\text{ if } \rho^{-1}(i) \text{ is defined}\\
e^{-1} & \text{ otherwise}
\end{cases}\\
\langle g_j\colon j\in J\rangle\rd\langle f_j\colon j\in J\rangle &= \langle b_i^{-1}\colon i\in I\rangle,\\
\text{ where } b_i^{-1} &=\begin{cases}
g_{\lambda^{-1}(i)}f_{\lambda^{-1}(i)}^{-1}\wedge e^{-1} &
\text{ if } \lambda^{-1}(i) \text{ is defined}\\
e^{-1} & \text{ otherwise},
\end{cases}\\
\langle a_i^{-1}\colon i\in I\rangle\rd \langle f_j\colon j\in
J\rangle &=(e^{-1})^I= \langle f_j\colon j\in J\rangle \ld \langle
a_i^{-1}\colon i\in I\rangle.
\end{align*}
\end{definition}

We will call the algebra we have just defined a \emph{kite} of
$\mathbf{G}$, and write $K_{I,J}^{\lambda,\rho}(\mathbf{G})$ for it.
Observe that if we take $I=J$, then $\lambda$ and $\rho$ become permutations of
the set of coordinates and so the kite construction is reminiscent of
wreath product. This analogy is not mistaken, as we will see later.
For the moment, let us focus on the algebra
$K_{I,J}^{\lambda,\rho}(\mathbf{G})$.
The next lemma shows that the algebra
$K_{I,J}^{\lambda,\rho}(\mathbf{G}) =
\langle (G^+)^J \uplus (G^-)^I;\vee,\wedge,\cdot,\ld,\rd, 1,0\rangle$
is a pseudo BL-algebra.

\begin{lemma}\label{hyper-psBL}
For any $\ell$-group $\mathbf{G}$ and any choice of appropriate sets $I,J$
and maps $\lambda,\rho$, the algebra $K_{I,J}^{\lambda,\rho}(\mathbf{G})$
is a pseudo BL-algebra.
\end{lemma}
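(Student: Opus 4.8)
The plan is to check that $K:=K_{I,J}^{\lambda,\rho}(\mathbf{G})$ meets the definition of a pseudo BL-algebra head on: it is an $\mathrm{FL}_w$-algebra (an integral, zero-bounded FL-algebra) that additionally satisfies prelinearity~(\ref{prelin}) and divisibility~(\ref{divis}). The lattice reduct of $K$ has already been noted to be a bounded lattice, with greatest element $1=(e^{-1})^I$ and least element $0=e^J$, so integrality~(\ref{integr}) and zero-boundedness~(\ref{0-bound}) are immediate. All the remaining work concerns the monoid operation $\cdot$ and the divisions $\ld,\rd$ of Definition~\ref{resid}; throughout, the $\ell$-group facts I shall lean on are that left and right translations in $\mathbf{G}$ distribute over $\vee$ and $\wedge$, that $g\vee g^{-1}\geq e$, that $g(g^{-1}h\wedge e)=h\wedge g$ together with its mirror image, and the elementary observation that an entry $a_i^{-1}$ of $(G^-)^I$ satisfies $a_i^{-1}\leq e$ while an entry $f_j$ of $(G^+)^J$ satisfies $f_j\geq e$.

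First I would verify that $\langle A;\cdot,1\rangle$ is a monoid. That $1$ is a two-sided unit is a one-line computation in each of the four defining clauses of $\cdot$, using only the sign observation just mentioned. Associativity is treated by splitting into cases according to which of the two strata $(G^+)^J$ and $(G^-)^I$ the three factors belong to --- eight cases, most of them trivial. The pattern is uniform: a product whose value falls into $(G^-)^I$ is computed coordinatewise in $\mathbf{G}$; a product whose value falls into $(G^+)^J$ is a coordinatewise product in $\mathbf{G}$ clipped from below by $e$; and a product of two elements of $(G^+)^J$ collapses to $0$. Reassociation then comes down to distributivity of translations over $\vee$, together with the fact that the clip ``$\vee e$'' may be inserted, deleted, or moved past other factors at will because it interacts correctly with the inequalities $a_i^{-1}\leq e\leq f_j$.

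Next, and this is where I expect the real work to lie, comes the residuation adjunction $xy\leq z \iff y\leq x\ld z \iff x\leq z\rd y$. The difficulty is not in any one case but in the number of stratum-combinations and the fussiness of the mixed clauses of Definition~\ref{resid}. Again one splits on the strata of $x$, $y$, $z$. Whenever $xy$ and $z$ lie in different strata, the inequality $xy\leq z$ is decided outright by the order --- $(G^+)^J$ lies entirely below $(G^-)^I$ --- and one checks that $y\leq x\ld z$, respectively $x\leq z\rd y$, is decided the same way; this disposes in one stroke of every combination in which $xy$ is a product of two elements of $(G^+)^J$, since such a product is the bottom $0$. When $xy$ and $z$ lie in the same stratum the adjunction reduces coordinatewise to the residuation law $hg\leq k\iff g\leq h^{-1}k$ of $\mathbf{G}$ and its mirror, modulo clipping by $e$ or $e^{-1}$, absorbed once more via $a_i^{-1}\leq e\leq f_j$. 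It is precisely here that injectivity of $\lambda$ and $\rho$ enters: it is what makes the partial inverses $\lambda^{-1},\rho^{-1}$ meaningful in Definition~\ref{resid}, and the equations $\lambda^{-1}\lambda=\mathrm{id}_J$ and $\rho^{-1}\rho=\mathrm{id}_J$ are what let the ``otherwise $e^{-1}$'' branch be shown harmless, the point being that in any such product only the coordinates in the images of $\lambda$ and $\rho$ are ever consulted. Monotonicity of $\cdot$ then follows from the adjunction, so $K$ is an $\mathrm{FL}_w$-algebra.

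It remains to establish prelinearity and divisibility. For $x\ld y\vee y\ld x=1$ (and its $\rd$-mirror): if $x$ and $y$ straddle the two strata then one of the two divisions is already $1$ by Definition~\ref{resid}, so the join is $1$; if they lie in the same stratum then in each coordinate the join is either $e$ outright or of the form $(g\wedge e)\vee(g^{-1}\wedge e)=(g\vee g^{-1})\wedge e=e$, whence the join equals $(e^{-1})^I=1$. For divisibility I would verify~(\ref{div-int}) in the form $x(x\ld y)=x\wedge y=(y\rd x)x$; since $K$ is integral, (\ref{divis}) then follows. If $x$ lies below $y$ across the strata, both divisions are $1$ and the identity is just the unit law, with $x\wedge y=x$; if $y$ lies below $x$ across the strata, a single cancellation in $\mathbf{G}$ gives $x(x\ld y)=(y\rd x)x=y=x\wedge y$; and within a stratum each coordinate computation is an instance of $g(g^{-1}h\wedge e)=h\wedge g$ or its mirror, modulo the clip by $e$, with injectivity of $\lambda,\rho$ again carrying the $(G^+)^J$ case. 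This exhausts the pseudo BL-algebra axioms, and the lemma follows.
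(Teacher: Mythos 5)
Your proposal is correct and follows essentially the same route as the paper: a case-by-case verification, split along the two strata $(G^+)^J$ and $(G^-)^I$, of the monoid laws, divisibility in the form $x(x\ld y)=x\wedge y=(y\rd x)x$, and prelinearity, using exactly the coordinatewise $\ell$-group identities and the harmlessness of the ``$\vee e$'' clip that the paper's computations rely on. The only difference is that you also verify explicitly that the formulas of Definition~\ref{resid} satisfy the residuation adjunction (where injectivity of $\lambda,\rho$ enters), a step the paper treats as implicit in the definition of the divisions.
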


\begin{proof}
It is clear that  $K_{I,J}^{\lambda,\rho}(\mathbf{G})$ is a
lattice-ordered groupoid with unit. To show that multiplication is
associative, first observe that triples from $(G^-)^I$ associate
because $(G^-)^I$ is just the negative cone of $\mathbf{G}^I$. Next,
triples involving at least two elements from $(G^+)^J$ associate
because both the products equal $0$. The remaining cases all involve
one element from $(G^-)^I$ and two from $(G^+)^J$. One such case is:
\begin{align*}
(\langle a_i^{-1}\colon i\in I\rangle\cdot\langle f_j\colon j\in J\rangle)\cdot\langle b_i^{-1}\colon i\in I\rangle &=
\langle a_{\lambda(j)}^{-1}f_j\vee e\colon j\in J\rangle\cdot\langle b_i^{-1}\colon i\in I\rangle\\
&= \langle( a_{\lambda(j)}^{-1}f_j\vee e)b_{\rho(j)}^{-1}\vee e\colon j\in J\rangle\\
&= \langle a_{\lambda(j)}^{-1}f_jb_{\rho(j)}^{-1}\vee b_{\rho(j)}^{-1}\vee e\colon j\in J\rangle\\
&= \langle a_{\lambda(j)}^{-1}f_jb_{\rho(j)}^{-1}\vee e\colon j\in J\rangle\\
&= \langle a_{\lambda(j)}^{-1}f_jb_{\rho(j)}^{-1}\vee a_{\lambda(j)}^{-1}\vee e\colon j\in J\rangle\\
&= \langle a_{\lambda(j)}^{-1}(f_jb_{\rho(j)}^{-1}\vee e)\colon j\in J\rangle\\
&= \langle a_i\colon i\in I\rangle\cdot\langle f_jb_{\rho(j)}^{-1}\vee e\colon j\in J\rangle\\
&= \langle a_i^{-1}\colon i\in I\rangle\cdot(\langle f_j\colon j\in J\rangle\cdot\langle b_i^{-1}\colon i\in I\rangle).
\end{align*}
Other cases follow by similar calculations. Now,
to show that divisibility holds, we also proceed case by case. Let us
deal with two cases here. The first is:
\begin{align*}
\langle f_j\colon j\in J\rangle\cdot(\langle f_j\colon j\in J\rangle\ld\langle g_j\colon j\in J\rangle)
&= \langle f_j\colon j\in J\rangle\cdot\langle a_i^{-1}\colon i\in I\rangle \\
&= \langle f_ja_{\rho(j)}^{-1}\vee e\colon j\in J\rangle
\end{align*}
where
$$
a_i^{-1} =\begin{cases}
f_{\rho^{-1}(i)}^{-1}g_{\rho^{-1}(i)}\wedge e^{-1} &
\text{ if } \rho^{-1}(i) \text{ is defined}\\
e^{-1} & \text{ otherwise}
\end{cases}
$$
but, observe that $\rho^{-1}(\rho(j))$ is always defined and equals $j$,
so calculating further we obtain
$f_ja_{\rho(j)}^{-1} = g_j\wedge f_j$ for every $j\in J$, and therefore
$$
\langle f_ja_{\rho(j)}^{-1}\vee e\colon j\in J\rangle =
\langle(f_j\wedge g_j)\vee e\colon j\in J\rangle =
\langle f_j\wedge g_j\colon j\in J\rangle
$$
as required. For the second, take:
\begin{align*}
\langle a_i^{-1}\colon i\in I\rangle\cdot(\langle a_i^{-1}\colon i\in I\rangle\ld\langle f_j\colon j\in J\rangle) &=
\langle a_i^{-1}\colon i\in I\rangle\cdot\langle a_{\lambda(j)}f_j\colon j\in J\rangle\\
&= \langle a_{\lambda(j)}^{-1}a_{\lambda(j)}f_j\vee e\colon j\in J\rangle\\
&= \langle f_j\colon j\in J\rangle.
\end{align*}
All other cases are straightforward. It remains to show prelinearity. Since
multiplication and divisions in $(G^-)^I$ are defined  coordinatewise,
prelinearity for $x,y\in (G^-)^I$ is inherited from $G^-$. If
$x\in (G^-)^I$ and $y\in (G^+)^J$, or \emph{vice versa} prelinearity holds
trivially. For the only remaining case, calculating
$$
\langle f_j\colon j\in J\rangle\ld\langle g_j\colon j\in J\rangle\vee
\langle g_j\colon j\in J\rangle\ld\langle f_j\colon j\in J\rangle
$$
yields two cases: (1) if $\rho^{-1}(i)$ is defined, we have
\begin{align*}
& f_{\rho^{-1}(i)}^{-1}g_{\rho^{-1}(i)}\wedge e^{-1}
\vee  g_{\rho^{-1}(i)}^{-1}f_{\rho^{-1}(i)}\wedge e^{-1}\\
&= (f_{\rho^{-1}(i)}^{-1}g_{\rho^{-1}(i)}\vee
g_{\rho^{-1}(i)}^{-1}f_{\rho^{-1}(i)})\wedge e^{-1})\\
&=  e^{-1}
\end{align*}
and (2) if $\rho^{-1}(i)$ is not defined, we have
$$
a^{-1}_i \vee a^{-1}_i = e^{-1}\vee e^{-1}= e^{-1}
$$
as well. Thus, prelinearity holds and that finishes the proof of all
the claims in the lemma.
\end{proof}

Somewhat surprisingly, many kites turn out to be pseudo MV-algebras.

\begin{lemma}\label{hyper-psMV}
Let $\mathbf{G}$ be an $\ell$-group, and
suppose $|I| = |J|$ and $\lambda,\rho$ are bijections. Then
$K_{I,J}^{\lambda,\rho}(\mathbf{G})$
is a pseudo MV-algebra.
\end{lemma}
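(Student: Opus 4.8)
The plan is to leverage Lemma~\ref{hyper-psBL}. Since $K_{I,J}^{\lambda,\rho}(\mathbf{G})$ is already a pseudo BL-algebra, and since, as recalled in Section~2, over pseudo BL-algebras the defining identity (\ref{MV-int}) of $\mathsf{psMV}$ is equivalent to the double negation law (\ref{dbl-neg}), namely $\ln\rn x = x = \rn\ln x$, it suffices to verify (\ref{dbl-neg}) in $K_{I,J}^{\lambda,\rho}(\mathbf{G})$. Recall that $0 = e^J$ sits at the bottom of $(G^+)^J$ and $1 = (e^{-1})^I$ at the top of $(G^-)^I$, so the negations $\ln x = x\ld 0$ and $\rn x = 0\rd x$ are read off directly from the division formulas of Definition~\ref{resid}; note that only divisions of the forms $\langle a_i^{-1}\rangle\ld\langle f_j\rangle$, $\langle f_j\rangle\rd\langle a_i^{-1}\rangle$, $\langle f_j\rangle\ld\langle g_j\rangle$ and $\langle g_j\rangle\rd\langle f_j\rangle$ are involved, since $0$ always lies in $(G^+)^J$.

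The key structural point, and the only place the hypothesis is used, is that when $|I|=|J|$ and $\lambda,\rho$ are bijections both negations interchange the two halves of the universe, sending $(G^-)^I$ onto $(G^+)^J$ and back, with the branches ``$e^{-1}$ otherwise'' never triggered because $\lambda^{-1}$ and $\rho^{-1}$ are everywhere defined, and with the meets $\wedge\, e^{-1}$ in those formulas vacuous because the relevant $G$-coordinate already lies in $G^-$. Explicitly, for $x=\langle a_i^{-1}\colon i\in I\rangle$ one gets $\ln x=\langle a_{\lambda(j)}\colon j\in J\rangle$ and $\rn x=\langle a_{\rho(j)}\colon j\in J\rangle$, while for $x=\langle f_j\colon j\in J\rangle$ one gets $\ln x=\langle f_{\rho^{-1}(i)}^{-1}\colon i\in I\rangle$ and $\rn x=\langle f_{\lambda^{-1}(i)}^{-1}\colon i\in I\rangle$.

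With these four formulas the four required instances of (\ref{dbl-neg}) collapse to cancelling $\lambda,\rho$ against their inverses. For example, from $x=\langle a_i^{-1}\colon i\in I\rangle$ we obtain $\rn x=\langle a_{\rho(j)}\colon j\in J\rangle\in(G^+)^J$ and then $\ln\rn x=\langle\,(a_{\rho(\rho^{-1}(i))})^{-1}\wedge e^{-1}\colon i\in I\,\rangle=\langle a_i^{-1}\colon i\in I\rangle=x$; the cases $\rn\ln x=x$ for $x\in(G^-)^I$ and $\ln\rn x=x=\rn\ln x$ for $x\in(G^+)^J$ are entirely analogous, each reducing to $\lambda\circ\lambda^{-1}=\mathrm{id}$ or $\rho\circ\rho^{-1}=\mathrm{id}$ together with the fact that the auxiliary meets with $e^{-1}$ are redundant.

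I do not anticipate a real obstacle: once the case of bijective $\lambda,\rho$ is isolated, the argument is a short bookkeeping exercise in tracking which half of $(G^+)^J\uplus(G^-)^I$ an element lands in after each negation. The only point needing a little care is checking that the ``otherwise'' branches and the meets with $e^{-1}$ genuinely play no role under the hypothesis — which is precisely where bijectivity enters, and also indicates why the statement can fail for merely injective $\lambda,\rho$.
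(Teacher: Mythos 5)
Your proposal is correct, but it takes a different route from the paper. The paper's own proof checks the pseudo MV identity (\ref{MV-int}), $x\rd(y\ld x)=x\vee y=(x\rd y)\ld x$, directly from Definition~\ref{resid}, via two case computations (one for $x\in(G^+)^J$, $y\in(G^-)^I$, one for $x,y\in(G^+)^J$), using bijectivity of $\lambda$ and $\rho$ inside those calculations. You instead verify the double negation law (\ref{dbl-neg}) and invoke the equivalence, stated in Section~2, of (\ref{MV-int}) and (\ref{dbl-neg}) over pseudo BL-algebras, together with Lemma~\ref{hyper-psBL}. Your four negation formulas are exactly right (they agree with the computations carried out later in the proof of Lemma~\ref{good-kites}), the ``otherwise'' branches indeed never fire when $\lambda^{-1},\rho^{-1}$ are total, and the meets with $e^{-1}$ are vacuous since $f_j^{-1}\in G^-$ and $a_i^{-1}\in G^-$; so the cancellation argument goes through in all four cases. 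What your approach buys is brevity and a clear localisation of where bijectivity matters, and it dovetails with Lemma~\ref{good-kites}(2); what it costs is reliance on the (standard, but not proved in this paper) equivalence of (\ref{MV-int}) and (\ref{dbl-neg}) over $\mathsf{psBL}$, which the paper's direct verification of (\ref{MV-int}) avoids. Since the paper itself uses that equivalence as background when defining $\mathsf{psMV}$, your argument is acceptable as written.
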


\begin{proof}
By Lemma~\ref{hyper-psBL} we only need to show that under the conditions of the
lemma, the identity $x\rd(y\ld x) = x\vee y = (x\rd y)\ld x$ holds.
We have two nontrivial cases to consider:

\medskip
\noindent
\emph{Case 1}. $x\in(G^+)^J$ and $y\in(G^-)^I$. Then,
$x = \langle f_j\colon j\in J\rangle \leq \langle a_i^{-1}\colon i\in I\rangle = y$, and so we calculate:
\begin{align*}
\langle f_j\colon j\in J\rangle\rd(\langle a_i^{-1}\colon i\in I\rangle\ld\langle f_j\colon j\in J\rangle) &=
\langle f_j\colon j\in J\rangle\rd\langle a_{\lambda(j)}f_j\colon j\in J\rangle\\
&= \langle f_{\lambda^{-1}(i)}(a_{\lambda(j)}f_j)^{-1}_{\lambda^{-1}(i)}\wedge e^{-1}\colon i\in I\rangle\\
&= \langle f_{\lambda^{-1}(i)}f_{\lambda^{-1}(i)}^{-1}a_i^{-1}\colon i\in I\rangle\\
&= \langle a_i^{-1}\colon i\in I\rangle
\end{align*}
which shows that $x\rd(y\ld x) = y = x\vee y$ holds. Notice that we used
bijectiveness of $\lambda$ to pass from the first to the second equality above.

\medskip
\noindent
\emph{Case 2}. $x,y\in(G^+)^J$. Then,
$x = \langle f_j\colon j\in J\rangle$ and $y = \langle g_j\colon j\in J\rangle$,
and so we calculate:
\begin{align*}
\langle f_j\colon j\in J\rangle\rd(\langle g_j\colon j\in J\rangle\ld\langle f_j\colon j\in J\rangle) &=
\langle f_j\colon j\in J\rangle\rd\langle g_{\rho^{-1}(i)}^{-1}f_{\rho^{-1}(i)}\wedge e^{-1}\colon i\in I\rangle\\
&= \langle f_j(g_{\rho^{-1}(i)}^{-1}f_{\rho^{-1}(i)}\wedge e^{-1})^{-1}_{\rho(j)}\colon j\in J\rangle\\
&= \langle f_j(f_j^{-1}g_j\vee e)\colon j\in J\rangle\\
&= \langle g_j\vee f_j\colon j\in J\rangle
\end{align*}
which again shows that $x\rd(y\ld x) = x\vee y$ holds. The proofs for
$x\vee y = (x\rd y)\ld x$ are symmetric.
\end{proof}

As we already mentioned, it was an open problem (\cite[Problem
3.21]{DGI2}) for a while whether every pseudo BL-algebra is good. We
found a negative solution in \cite{DGK} using special types of kites
from~\cite{JiMo}. Below we characterize good kites, and in the last
section we will exhibit a countably infinite family of varieties of
kites with the property that their all and only good members are
Boolean algebras.

\begin{lemma}\label{good-kites}
Let $\mathbf{G}$ be an $\ell$-group, and
$K_{I,J}^{\lambda,\rho}(\mathbf{G})$ a kite.
\begin{enumerate}
\item
$K_{I,J}^{\lambda,\rho}(\mathbf{G})$ is good if and only if
$\lambda(J)=\rho(J)$.
\item $K_{I,J}^{\lambda,\rho}(\mathbf{G})$ is a pseudo MV-algebra
if and only if $\lambda(J)=I=\rho(J)$.
\end{enumerate}
\end{lemma}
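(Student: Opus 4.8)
The plan is to compute the two negations $\ln x = x\ld 0$ and $\rn x = 0\rd x$ explicitly on each of the two pieces $(G^+)^J$ and $(G^-)^I$ of the kite, and then compose them. Recall $0=e^J$ and $1=(e^{-1})^I$. Plugging these constants into Definition~\ref{resid}, and noting that the meets with $e^{-1}$ occurring there are redundant in this situation (their other argument always lies in the negative cone, hence is $\leq e^{-1}$), one obtains: for $\langle a_i^{-1}\colon i\in I\rangle\in(G^-)^I$,
\[
\ln\langle a_i^{-1}\colon i\in I\rangle=\langle a_{\lambda(j)}\colon j\in J\rangle,\qquad
\rn\langle a_i^{-1}\colon i\in I\rangle=\langle a_{\rho(j)}\colon j\in J\rangle;
\]
and for $\langle f_j\colon j\in J\rangle\in(G^+)^J$, the $i$-th coordinate of $\ln\langle f_j\colon j\in J\rangle$ is $f_{\rho^{-1}(i)}^{-1}$ if $i\in\rho(J)$ and $e^{-1}$ otherwise, while the $i$-th coordinate of $\rn\langle f_j\colon j\in J\rangle$ is $f_{\lambda^{-1}(i)}^{-1}$ if $i\in\lambda(J)$ and $e^{-1}$ otherwise. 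In particular each negation swaps the pieces $(G^+)^J$ and $(G^-)^I$.

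Composing these formulas and using $\rho^{-1}(\rho(j))=j=\lambda^{-1}(\lambda(j))$, one finds that $\ln\rn x = x = \rn\ln x$ holds for \emph{every} $x\in(G^+)^J$, with no restriction on $\lambda,\rho$; whereas for $x=\langle a_i^{-1}\colon i\in I\rangle\in(G^-)^I$, the $i$-th coordinate of $\ln\rn x$ equals $a_i^{-1}$ when $i\in\rho(J)$ and $e^{-1}$ otherwise, and the $i$-th coordinate of $\rn\ln x$ equals $a_i^{-1}$ when $i\in\lambda(J)$ and $e^{-1}$ otherwise.

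Both statements now follow. For (1): $K_{I,J}^{\lambda,\rho}(\mathbf{G})$ is good iff $\ln\rn x=\rn\ln x$ for all $x$; by the above this is automatic on $(G^+)^J$, and on $(G^-)^I$ it holds for all $x$ precisely when $\rho(J)=\lambda(J)$. For the nontrivial implication here one uses that $\mathbf{G}$ has an element distinct from $e$: if, say, $i_0\in\rho(J)\setminus\lambda(J)$, then taking $x\in(G^-)^I$ with $i_0$-th coordinate some $a\neq e^{-1}$ and all other coordinates $e^{-1}$ yields $\ln\rn x\neq\rn\ln x$, the two differing at $i_0$. For (2): by Lemma~\ref{hyper-psBL} the kite is already a pseudo BL-algebra, so it is a pseudo MV-algebra iff it satisfies the double-negation law $\ln\rn x = x = \rn\ln x$; by the previous paragraph this need only be checked on $(G^-)^I$, where ``$\ln\rn x = x$ for all $x$'' is equivalent to $\rho(J)=I$ and ``$\rn\ln x = x$ for all $x$'' to $\lambda(J)=I$, that is, to $\lambda(J)=I=\rho(J)$. (Alternatively, the ``if'' direction of (2) is an instance of Lemma~\ref{hyper-psMV}, since $\lambda(J)=I=\rho(J)$ together with injectivity of $\lambda,\rho$ forces them to be bijections and $|I|=|J|$.)

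The only substantive work is the first step: unwinding Definition~\ref{resid} at $0=e^J$ and $1=(e^{-1})^I$, and checking carefully that the ``$\rho^{-1}(i)$ is defined'' / ``$\lambda^{-1}(i)$ is defined'' case distinctions, and the redundancy of the meets with $e^{-1}$, work out as stated. Everything after that is immediate. Throughout one assumes $\mathbf{G}$ nontrivial; for the trivial group the kite reduces to the two-element Boolean algebra.
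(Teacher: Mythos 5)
Your proposal is correct and follows essentially the same route as the paper: compute $\ln$ and $\rn$ explicitly from Definition~\ref{resid}, compose them on each of the two pieces, and read off that goodness amounts to $\lambda(J)=\rho(J)$ while the double-negation law (equivalent to the pseudo MV identity over pseudo BL-algebras) amounts to $\lambda(J)=I=\rho(J)$. If anything, your version is stated more carefully than the paper's: your observation that double negations are the identity on $(G^+)^J$ and truncate elements of $(G^-)^I$ to $\rho(J)$, resp.\ $\lambda(J)$, matches the definitions exactly (the paper's write-up has the indices swapped and a spurious case distinction on $(G^+)^J$), and your explicit nontriviality assumption on $\mathbf{G}$ is the same tacit assumption the paper makes when it says ``we can assume that each $a_i^{-1}\ne e^{-1}$''.
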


\begin{proof}
(1) Let $x= \langle a_i\colon i\in I\rangle.$  Then $\rn x = \langle
a_{\rho(j)}\colon j\in J\rangle$ and $\ln x= \langle
a_{\lambda(j)}\colon j\in J\rangle.$ In addition, $\ln\rn x=
\langle z_i\colon i\in I\rangle, $ where
$$
z_i^{-1}=\begin{cases} a_i^{-1} &
\text{ if } \lambda^{-1}(i) \text{ is defined}\\
e^{-1} & \text{ otherwise},
\end{cases}
$$
and $\rn\ln x= \langle y_i\colon i\in I\rangle$, where
$$
y_i^{-1}=\begin{cases} a_i^{-1} &
\text{ if } \rho^{-1}(i) \text{ is defined}\\
e^{-1} & \text{ otherwise}.
\end{cases}
$$
Now, if $x = \langle f_j\colon j\in J\rangle$, we have
$\ln\rn x = \langle g_j\colon j\in J\rangle$, where
$$
g_j =
\begin{cases} f_j & \text{ if } \rho^{-1}(i) \text{ is defined}\\
e & \text{ otherwise},
\end{cases}
$$
and $\rn\ln x = \langle h_j\colon j\in J\rangle$, where
$$
h_j =
\begin{cases} f_j & \text{ if } \lambda^{-1}(i) \text{ is defined}\\
e & \text{ otherwise}.
\end{cases}
$$
Hence, if $\lambda(J)=\rho(J),$ the kite
$K_{I,J}^{\lambda,\rho}(\mathbf{G})$ is good.

Conversely, assume that the kite
$K_{I,J}^{\lambda,\rho}(\mathbf{G})$ is good, and let $\lambda(J)\ne
\rho(J).$ For  our aims we can assume that each $a_i^{-1} \ne
e^{-1}.$ There is an $i\in I$ such that either $\lambda^{-1}(i)$ or
$\rho^{-1}(i)$ is not defined. Equivalently, $x_i^{-1}= e^{-1}$ and
$x_i^{-1}=a_i^{-1}$ or $y_i^{-1}=e^{-1}$ and $y_i^{-1}=a_i^{-1}.$
Hence, $\lambda(J)=\rho(J).$

(2) If $\lambda(J)=I=\rho(J)$, then
$K_{I,J}^{\lambda,\rho}(\mathbf{G})$ is a pseudo MV-algebra by Lemma
\ref{hyper-psMV}. Conversely, let the kite be a pseudo MV-algebra.
Since every pseudo MV-algebra is good, by the first part of the
present proof, we have $\lambda(J)=\rho(J)$. Now assume by absurd
that there is an $i\in I\setminus \lambda(J)$. Then both
$\lambda^{-1}(i)$ and $\rho^{-1}(i)$ are not defined and whence
$x_i^{-1}=e^{-1} = y_i^{-1} \ne a_i$ which contradicts the property
$\ln\rn x=x=\rn\ln x$. Whence $\lambda(J)=I=\rho(J)$.
\end{proof}

\subsection{One-dimensional elements}
One property of kites that is frequently used in calculations is
that double negations amount to certain shifts (often, rotations) of
coordinate system. Below, we will state this observation in the form
that will later help characterize finite-dimensional kites.

Let $K_{I,J}^{\lambda,\rho}(\mathbf{G})$ be a kite. An element
$a\in (G^-)^I$  will be called $\alpha$-dimensional, for some
cardinal $\alpha$, if $|\{i\in I\colon a(i) \neq e\}| = \alpha$.
Similarly, this notion applies to elements of $(G^+)^J$.
One-dimensional elements are particularly easy to work with, and, moreover,
it is immediately seen that every element of a kite is a join or a meet of
one-dimensional elements.

\begin{lemma}\label{rotations}
Let $K_{I,J}^{\lambda,\rho}(\mathbf{G})$ be a kite and $a\in
(G^-)^I$ be one-dimensional, with $a(i)\neq e^{-1}$. Then $\ln\ln
a$, $\ln\rn a$, $\rn\ln a$, and $\rn\rn a$ are at most
one-dimensional. If $\lambda^{-1}(i)$ and $\rho^{-1}(i)$ are
defined, these elements are exactly one-dimensional. More precisely,
the following hold:
\begin{enumerate}
\item $\ln\ln a< 1$ and $\rn\ln a<1$ if{}f $\lambda^{-1}(i)$ is defined,
\item $\rn\rn a< 1$ and $\ln\rn a<1$ if{}f $\rho^{-1}(i)$ is defined,
\item $\rn\ln a = a$ if{}f $\lambda^{-1}(i)$ is defined,
\item $\ln\rn a = a$ if{}f $\rho^{-1}(i)$ is defined,
\item $\ln\ln a\vee a = 1$ if{}f $\rho(\lambda^{-1}(i))\neq i$,
\item $\rn\rn a\vee a = 1$ if{}f $\lambda(\rho^{-1}(i))\neq i$.
\end{enumerate}
\end{lemma}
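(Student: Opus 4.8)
The statement is a compendium of six biconditionals about a one-dimensional element $a\in (G^-)^I$, together with the coarse "at most one-dimensional" claim. The unifying tool is the formula for double negation on $(G^-)^I$ already worked out inside the proof of Lemma~\ref{good-kites}: for $x=\langle a_i^{-1}\colon i\in I\rangle$ one has $\ln\rn x=\langle z_i\rangle$ with $z_i^{-1}=a_i^{-1}$ when $\lambda^{-1}(i)$ is defined and $z_i^{-1}=e^{-1}$ otherwise, and $\rn\ln x=\langle y_i\rangle$ with the analogous clause for $\rho^{-1}(i)$. So the first thing I would do is record the four relevant composite maps explicitly. The negations send $(G^-)^I$ to $(G^+)^J$: $\ln x=\langle a_{\lambda(j)}^{-1}\cdot{}\rangle$ — more precisely, using Definition~\ref{resid}, $\ln x = x\ld 0$ lands in $(G^+)^J$ with $j$-th coordinate a function of $a_{\lambda(j)}$, and similarly $\rn x = 0\rd x$ with $j$-th coordinate a function of $a_{\rho(j)}$. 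Composing a second negation (which reads coordinate $\lambda(j)$ or $\rho(j)$ of the $(G^+)^J$ vector and writes back into $(G^-)^I$, padding with $e^{-1}$ off the image) gives four "read–write" operators on $(G^-)^I$. The point to isolate cleanly is: $\ln\ln a$ at coordinate $k$ equals (a $G$-expression in) $a_i$ exactly when $k$ lies on the orbit segment dictated by the relevant composite of $\lambda,\rho$ and their partial inverses, and is $e^{-1}$ otherwise. Since $a$ is one-dimensional supported at $i$, each of $\ln\ln a,\ln\rn a,\rn\ln a,\rn\rn a$ is supported at a single coordinate or is the constant $e^{-1}$ (i.e.\ $=1$); this gives the "at most one-dimensional" assertion at once, and it is exactly one-dimensional precisely when the required partial inverse at $i$ is defined.

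Next I would dispatch items (3) and (4), which are the cleanest. By the explicit formula, $\rn\ln a = a$ iff no coordinate gets zeroed out, i.e.\ iff $\lambda^{-1}(i)$ is defined (when $\lambda^{-1}(i)$ is undefined, the $i$-th coordinate of $\rn\ln a$ is $e^{-1}\neq a_i$, using that we may assume $a_i\neq e$; when it is defined the formula returns $a_i$ unchanged). Symmetrically $\ln\rn a=a$ iff $\rho^{-1}(i)$ is defined. Items (1) and (2) are then immediate corollaries of the support computation: $\ln\ln a<1$ means $\ln\ln a\neq (e^{-1})^I$, i.e.\ its unique possibly-nontrivial coordinate is actually nontrivial, which again reduces to definedness of $\lambda^{-1}(i)$ (for $\ln\ln a$ and $\rn\ln a$, both of which factor through $\ln$ first, reading coordinate $\lambda^{-1}(i)$ back into $I$) resp.\ $\rho^{-1}(i)$ (for $\rn\rn a$ and $\ln\rn a$). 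Here one must be a little careful about which of the two outer negations controls the final support versus the value — the inner negation determines whether the $(G^+)^J$-vector is nonzero at the needed spot, the outer one only relocates — so I would phrase the support lemma so that it reads off the inner negation.

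The genuinely substantive cases are (5) and (6): $\ln\ln a\vee a=1$ iff $\rho(\lambda^{-1}(i))\neq i$. By the support analysis, $\ln\ln a$ is one-dimensional supported at some coordinate $i' = \rho(\lambda^{-1}(i))$ (when $\lambda^{-1}(i)$ is defined; when it is undefined, $\ln\ln a=1$ and the join is trivially $1$, consistent with $\rho(\lambda^{-1}(i))\neq i$ being vacuously/conventionally true — I would handle this boundary case explicitly). Now $a$ is supported at $i$, so the join of these two one-dimensional negative-cone elements is $(e^{-1})^I=1$ iff their supports are disjoint, i.e.\ iff $i'\neq i$, i.e.\ iff $\rho(\lambda^{-1}(i))\neq i$ — and if $i'=i$ one must also check the values don't conspire to give $e^{-1}$ at that coordinate, but since $\ln\ln a$ at $i'$ is $a_i$ (or $a_i^{-1}a_i = \ldots$, whatever the explicit formula yields — it should be $a_i$ again by the double-negation formula) and the join in $G^-$ of two elements $\leq e^{-1}$ equals $e^{-1}$ only when one of them is $e^{-1}$, the join is $<1$. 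Item (6) is the mirror image with $\lambda,\rho$ swapped: $\rn\rn a$ is supported at $\lambda(\rho^{-1}(i))$.

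\textbf{Main obstacle.} The bookkeeping in steps (5)–(6): correctly identifying the support coordinate of $\ln\ln a$ as $\rho(\lambda^{-1}(i))$ rather than some other composite, and being consistent about whether a "read" uses $\lambda$ or $\rho$ and whether it uses the map or its partial inverse — the four operators $\ln\ln,\ln\rn,\rn\ln,\rn\rn$ pair up the maps in all four ways, and the asymmetry of $\ld$ versus $\rd$ in Definition~\ref{resid} makes it easy to transpose a $\lambda$ and a $\rho$. I would therefore set up a single auxiliary computation that tracks the image of the support coordinate under one negation, state it once for $\ln$ and once for $\rn$, and then obtain all six items mechanically by composition, rather than re-deriving each from scratch.
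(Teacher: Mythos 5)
Your proposal is correct and takes essentially the same route as the paper: compute the four double negations of a one-dimensional $a$ explicitly from Definition~\ref{resid}, observe that the inner negation decides whether the result is $1$ (when $\lambda^{-1}(i)$, resp.\ $\rho^{-1}(i)$, is undefined) while the outer one relocates the support to $\rho(\lambda^{-1}(i))$, resp.\ $\lambda(\rho^{-1}(i))$, with value $a(i)^{-1}$ unchanged, and then read off (1)--(6) from the coordinatewise order (the paper merely shortcuts the ``at most one-dimensional'' claim for $\ln\rn a$ and $\rn\ln a$ via the residuated-lattice inequalities $\ln\rn a\geq a$ and $\rn\ln a\geq a$). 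One caution: your auxiliary claim that a join of two elements $\leq e^{-1}$ in $G^-$ can equal $e^{-1}$ only if one of them is $e^{-1}$ fails for non-linearly ordered $\mathbf{G}$ (in $\mathbb{Z}^2$ one has $(-1,0)\vee(0,-1)=e$), but it is not needed, since when $\rho(\lambda^{-1}(i))=i$ the explicit formula gives $\ln\ln a=a$ exactly, so the join is $a<1$.
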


\begin{proof}
To begin with, since $\ln\rn a \geq a$ and $\rn\ln a\geq a$ hold in
any residuated lattice, $\ln\rn a$ and $\rn\ln a$ are at most
one-dimensional. Let us calculate $\ln\ln a$. To make the notation
less cumbersome, we first regard $a$ as a sequence $\langle
e^{-1},\dots, a^{-1}(i),e^{-1},\dots\rangle$, and then write
$\langle a^{-1}(i)\rangle$ for $\langle
e^{-1},\dots,a^{-1}(i),e^{-1},\dots\rangle$ and $\langle
a(i)\rangle$ for $\langle e,\dots, a(i), e,\dots\rangle$. Similarly,
we put $\langle e\rangle$ for $e^J = 0$ and $\langle e^{-1}\rangle$
for $(e^{-1})^I = 1$. Calculating $\langle
a^{-1}(i)\rangle\ld\langle e\rangle$ we see that it is different
from $e$ at coordinate $\lambda(j)$ if and only if $\lambda(j) = i$.
Therefore
$$
\langle a^{-1}(i)\rangle\ld\langle e\rangle =\begin{cases}
\langle a(\lambda^{-1}(i))\rangle &
\text{ if } \lambda^{-1}(i) \text{ is defined}\\
\langle e\rangle & \text{ otherwise}.
\end{cases}
$$
Then, calculating $\langle a(\lambda^{-1}(i))\rangle\ld\langle e\rangle$ in
turn, we get that it is different from $e^{-1}$ at
coordinate $k$ if and only if $\rho^{-1}(k)$ is defined and
equal to $\lambda^{-1}(i)$, that is
if and only if $k = \rho(\lambda^{-1}(i))$. Altogether, we have
$$
\bigl(\langle a^{-1}(i)\rangle\ld\langle e\rangle\bigr)\ld\langle e\rangle =\begin{cases}
\langle a^{-1}(\rho\circ\lambda^{-1}(i))\rangle &
\text{ if } \lambda^{-1}(i) \text{ is defined}\\
\langle e^{-1}\rangle & \text{ otherwise}.
\end{cases}
$$
By similar calculations we also obtain
\begin{align*}
\langle e\rangle\rd\bigl(\langle a^{-1}(i)\rangle\ld\langle e\rangle\bigr) &=\begin{cases}
\langle a^{-1}(i)\rangle &
\text{ if } \lambda^{-1}(i) \text{ is defined}\\
\langle e^{-1}\rangle & \text{ otherwise}
\end{cases}\\
\bigl(\langle e\rangle\rd\langle a^{-1}(i)\rangle\bigr)\ld\langle e\rangle &=\begin{cases}
\langle a^{-1}(i)\rangle &
\text{ if } \rho^{-1}(i) \text{ is defined}\\
\langle e^{-1}\rangle & \text{ otherwise}
\end{cases}\\
\langle e\rangle\rd\bigl(\langle e\rangle\rd\langle a^{-1}(i)\rangle\bigr) &=\begin{cases}
\langle a^{-1}(\lambda\circ\rho^{-1}(i))\rangle &
\text{ if } \rho^{-1}(i) \text{ is defined}\\
\langle e^{-1}\rangle & \text{ otherwise}.
\end{cases}
\end{align*}
Then all the claims follow from these calculations and
coordinatewise ordering of $(G^-)^I$. To give just one example, for
(5) we have $\ln\ln a\vee a = 1$ if and only if $\langle
a^{-1}(\rho\circ\lambda^{-1}(i))\rangle\vee \langle a^{-1}(i)\rangle
= \langle e^{-1}\rangle$ if and only if
$\rho\circ\lambda^{-1}(i)\neq i$.
\end{proof}

\section{Examples of kites}

As we have already remarked, in the kite construction we can think of the index
sets $I$ and $J$ as ordinals. Doing so systematically also makes
classification of kites easier, so throughout this section we
assume that $I$ and $J$ are ordinals. Below we give five examples of rather
familiar algebras that can be rendered as kites.

\subsection{Boolean algebras}
Let $I = 0 = J$. Then, $(G^-)^I$ and $(G^+)^J$ are both singletons,
and $\lambda = \rho$ can only be the empty function (hence, an
injection). Thus, $K_{0,0}^{\emptyset,\emptyset}(\mathbf{G})$ is the
two-element Boolean algebra for any $\ell$-group $\mathbf{G}$. We
also get the two-element Boolean algebra in another way. Namely, let
$\mathbf{O}$ be the trivial $\ell$-group. Then $\lambda = id =
\rho$, and $K_{I,J}^{id,id}(\mathbf{O})$ is the two-element Boolean
algebra for any choice of $I$ and $J$.

\subsection{Product logic algebras}
Let $I = 1$ and $J=0$. As the only function from $J$ to $I$ is the
empty function (which is an injection),  the kite
$K_{1,0}^{\emptyset,\emptyset}(\mathbf{G})$ is well-defined for any
$\ell$-group $\mathbf{G}$. Let $\mathbb{R}^+$ stand for the
$\ell$-group of positive reals under multiplication. Then
$K_{1,0}^{\emptyset,\emptyset}(\mathbb{R}^+)$ is isomorphic to the
\emph{standard product logic algebra}, i.e., the real interval
$[0,1]$ with usual multiplication, and divisions given by $x\rd y =
\frac{x}{y} = y\ld x$ for $y\neq 0$ and $x\rd 0 = 1 = 0\ld x$.
Taking $\mathbb{Z}$ for $\mathbf{G}$, we obtain as
$K_{1,0}^{\emptyset,\emptyset}(\mathbb{Z})$ the algebra
$\mathbb{Z}^-_\bot =
\langle\mathbb{Z}^-\cup\{\bot\};\mathrm{max},\mathrm{min},
+,-,\bot,0\rangle$, where $\bot = -\infty$. This is also a product
logic algebra. Both $K_{1,0}^{\emptyset,\emptyset}(\mathbb{R}^+)$
and $K_{1,0}^{\emptyset,\emptyset}(\mathbb{Z})$ generate the whole
variety of product logic algebras. This variety covers the variety
of Boolean algebras.

\subsection{Jipsen-Montagna algebras}
Taking $I = 2$, $J =1$, $\lambda(0) = 0$, and $\rho(0) = 1$, we get
that $K_{2,1}^{\lambda,\rho}(\mathbf{G})$ is a Jipsen-Montagna
algebra, for any $\ell$-group $\mathbf{G}$. In particular, the
algebra $K_{2,1}^{\lambda,\rho}(\mathbb{Z})$ is a pseudo BL-algebra which is not good \cite{DGK} and it generates another cover
of the variety of Boolean algebras. In Section~\ref{appl} we will
show that the same holds for $K_{n+1,n}^{\lambda,\rho}(\mathbb{Z})$
with $\lambda(i) = i$ and $\rho(i) = i+1$, for an arbitrary $n\in
\omega$.

\subsection{Chang chain and its subdirect powers}
Now, let $I = 1 = J$. Then $K_{1,1}^{id,id}(\mathbb{Z})$ is the
Chang chain, denoted $\mathbf{S}^\omega_1$ in Komori's paper, and
$\mathbf{C}_\infty$ in \cite{GJKO}. The variety generated by
$K_{1,1}^{id,id}(\mathbb{Z})$ is also a cover of the variety of
Boolean algebras. For $I = J > 1$, the kite
$K_{I,I}^{id,id}(\mathbb{Z})$ is subdirectly embeddable in
$(\mathbf{C}_\infty)^I$ and thus $\mathsf V(K_{1,1}^{id,id}(\mathbb{Z})) =
\mathsf V(K_{I,I}^{id,id}(\mathbb{Z}))$.

\subsection{Intervals in Scrimger groups}
Taking $I = J = n$ for $n\geq 2$, and putting $\lambda(i) = i$ and
$\rho(i) = i+1$ ($\mathrm{mod}\ n$), we get that
$K_{I,I}^{\lambda,\rho}(\mathbb{Z})$ is isomorphic to
$\Gamma(\mathbf{G}_n, (\langle 0\rangle,1))$, where $\mathbf{G}_n$
is the subgroup of $\mathbb{Z}\wr\mathbb{Z}$ (antilexicographically
ordered), consisting of the elements $\langle\langle a_i\colon
i\in\mathbb{Z}\rangle, b\rangle$, such that $i=j$ ($\mathrm{mod}\
n$) implies $a_i = a_j$.

\section{Subdirectly irreducible kites}

We will characterise subdirectly irreducible kites and show they fall into five
broad classes. To begin with, we state a few facts on normal filters in kites.

\begin{lemma}\label{up-max-norm}
Let $K_{I,J}^{\lambda,\rho}(\mathbf{G})$ be a kite. Then
$(G^-)^I$ is a maximal normal filter of $K_{I,J}^{\lambda,\rho}(\mathbf{G})$.
\end{lemma}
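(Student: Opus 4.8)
The plan is to verify three things in turn: that $(G^-)^I$ is a filter, that it is normal, and that no proper filter properly contains it.

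For the first point I would use that the kite is integral, being a pseudo BL-algebra by Lemma~\ref{hyper-psBL}, so that it suffices to show $(G^-)^I$ is upward closed and closed under multiplication. Upward closure is immediate from the definition of the order on the kite, which places every element of $(G^+)^J$ below every element of $(G^-)^I$; hence nothing outside $(G^-)^I$ can sit above an element of it. Closure under multiplication holds because the product of two elements of $(G^-)^I$ is formed coordinatewise in the negative cone of $\mathbf{G}$, hence stays in $(G^-)^I$.

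For normality I would fix $x\in (G^-)^I$ and an arbitrary element $y$ of the kite, and show that the conjugates $y\ld xy\wedge 1$ and $yx\rd y\wedge 1$ lie in $(G^-)^I$; the meets with $1$ are redundant since the kite is integral. The key observation is that, whichever half of the kite $y$ lies in, both $xy$ and $yx$ lie in that same half: this is clear if $y\in (G^-)^I$, and if $y=\langle f_j\colon j\in J\rangle\in (G^+)^J$ then, by the definition of multiplication, $xy=\langle a_{\lambda(j)}^{-1}f_j\vee e\colon j\in J\rangle$ and $yx=\langle f_ja_{\rho(j)}^{-1}\vee e\colon j\in J\rangle$, both in $(G^+)^J$. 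Thus $y\ld xy$ is a left division of two elements of one half of the kite and $yx\rd y$ a right division of two such elements; consulting Definition~\ref{resid}, a left or right division of two elements of $(G^-)^I$ again lies in $(G^-)^I$ (there the divisions are coordinatewise in $G^-$), and a left or right division of two elements of $(G^+)^J$ also lies in $(G^-)^I$. So the conjugates land in $(G^-)^I$ in every case. This case split is the only spot requiring attention, and the step I expect to be the most tedious, though it consists of nothing beyond reading off the clauses of Definition~\ref{resid}.

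Finally, maximality. Since $0=e^J$ lies in $(G^+)^J$ and hence not in $(G^-)^I$, the filter $(G^-)^I$ is proper. Suppose $F$ is a normal filter properly containing $(G^-)^I$, and pick $f\in F\cap (G^+)^J$. As $F$ is closed under multiplication and the product of two elements of $(G^+)^J$ equals $0$, we get $0\in F$; being upward closed and containing the least element, $F$ is the whole kite. So no proper filter, normal or not, properly contains $(G^-)^I$, and therefore $(G^-)^I$ is a maximal normal filter. One can also check that the congruence corresponding to $(G^-)^I$ via Lemma~\ref{cong} collapses the kite onto the two-element Boolean algebra, which is simple, giving another view of the same fact.
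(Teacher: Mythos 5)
Your proposal is correct and follows essentially the same route as the paper: the filter and maximality parts are immediate (the latter because the product of any two elements of $(G^+)^J$ is $0$), and normality reduces to observing that for $y\in(G^+)^J$ the products $xy$ and $yx$ stay in $(G^+)^J$, whence the divisions $y\ld xy$ and $yx\rd y$ land back in $(G^-)^I$ by Definition~\ref{resid}. Your write-up just spells out the steps the paper declares clear.
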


\begin{proof}
It is clear that $(G^-)^I$ is a maximal filter, so we only need to show that
$(G^-)^I$ is closed under conjugation by elements outside $(G^-)^I$.
Let $x\in (G^-)^I$ and $y\in(G^+)^J$. Taking $y\ld xy$ we observe that
$xy\in(G^+)^J$. By definition of divisions, we then get that
$y\ld xy\in(G^-)^I$. By symmetry the same holds for right conjugates.
\end{proof}

\begin{lemma}\label{hyper-filters}
Let $\mathbf{G}$ be an $\ell$-group, and
$K_{I,J}^{\lambda,\rho}(\mathbf{G})$ a kite.
Let $F$ be a convex normal subgroup of $\mathbf{G}$, and
$N = F|_{G^-}$. Then $N^I =
\{\langle a^{-1}_i\colon i\in I\rangle\colon a^{-1}_i\in N\}$ is a normal
filter of $K_{I,J}^{\lambda,\rho}(\mathbf{G})$.
\end{lemma}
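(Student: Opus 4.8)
The plan is to verify the two conditions defining a normal filter of a pseudo BL-algebra — that $N^I$ is a convex subalgebra of the $0$-free reduct, and that it is closed under conjugation — directly from the definitions of the kite operations. The key observation is that, restricted to $(G^-)^I$, all the kite operations act coordinatewise and coincide with the residuated-lattice operations of the negative cone of $\mathbf{G}^I$, and that $N^I$ is precisely the negative cone of the convex normal $\ell$-subgroup $F^I$ of $\mathbf{G}^I$.

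First I would record the relevant facts about $N=F|_{G^-}$: it contains $e^{-1}$, it is a sublattice of $G^-$ closed under the product of negative elements and under the map $a^{-1},b^{-1}\mapsto ab^{-1}\wedge e^{-1}$, and, since $F$ is convex in $\mathbf{G}$, it is upward closed in $G^-$ and more generally convex there, in the sense that $u\leq v\leq w$ with $u,w\in N$ and $v\in G^-$ forces $v\in N$. From this, $1=(e^{-1})^I\in N^I$ and closure of $N^I$ under $\wedge,\vee,\cdot,\ld,\rd$ are immediate, since these operations on $(G^-)^I$ are coordinatewise. Convexity of $N^I$ inside the kite follows because every element of $(G^+)^J$ lies below every element of $(G^-)^I$: if $x\in N^I$ and $z\geq x$ in the kite then necessarily $z\in(G^-)^I$, and coordinatewise $z_i^{-1}\geq x_i^{-1}$, so $z\in N^I$. (Since the kite is integral by Lemma~\ref{hyper-psBL}, one could instead just check upward closure and closure under $\cdot$.)

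The substantive part is normality: I would check $y\ld xy\in N^I$ and $yx\rd y\in N^I$ for every $x\in N^I$ and every $y$ in the kite, splitting on whether $y\in(G^-)^I$ or $y\in(G^+)^J$. When $y\in(G^-)^I$ the products $xy,yx$ stay in $(G^-)^I$, and this is just normality of the filter $N^I$ in the negative cone of $\mathbf{G}^I$; concretely, the negative-cone division rules give that the $i$-th coordinates of $y\ld xy$ and $yx\rd y$ are $y_ix_i^{-1}y_i^{-1}\wedge e^{-1}$ and $y_i^{-1}x_i^{-1}y_i\wedge e^{-1}$ respectively, which lie in $F$ because $F$ is normal in $\mathbf{G}$ and are $\leq e^{-1}$, hence lie in $N$. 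When $y=\langle f_j\colon j\in J\rangle\in(G^+)^J$ the products $xy,yx$ land in $(G^+)^J$, and applying the division rules for two elements of $(G^+)^J$ yields, at coordinate $i=\rho(j)$ (resp.\ $i=\lambda(j)$), a value of the form $\bigl((y_j^{-1}x_{\lambda(j)}^{-1}y_j)\vee y_j^{-1}\bigr)\wedge e^{-1}$ (resp.\ $\bigl((y_jx_{\rho(j)}^{-1}y_j^{-1})\vee y_j^{-1}\bigr)\wedge e^{-1}$) and the value $e^{-1}$ at coordinates outside $\rho(J)$ (resp.\ $\lambda(J)$).

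The one step that needs genuine care — and the place I expect to be the only real obstacle — is checking that these mixed-case coordinates actually lie in $N$, since they manifestly involve the arbitrary negative element $y_j^{-1}$, which need not belong to $F$. This is rescued by convexity of $F$: the conjugate $y_j^{-1}x_{\lambda(j)}^{-1}y_j$ lies in $F$ (as $F$ is normal) and is $\leq e^{-1}$, so it sits below the displayed value, while $e^{-1}\in F$ sits above it; hence convexity of $F$ forces the displayed value into $F$, and being $\leq e^{-1}$ it lies in $N$ (the same argument applies with $y_jx_{\rho(j)}^{-1}y_j^{-1}$ for the right conjugate). With both conjugates handled in both cases, $N^I$ is closed under conjugation, and together with the second paragraph this establishes that $N^I$ is a normal filter of $K_{I,J}^{\lambda,\rho}(\mathbf{G})$.
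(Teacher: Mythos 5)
Your proof is correct and follows essentially the same route as the paper: coordinatewise reasoning handles the filter conditions and conjugation by elements of $(G^-)^I$, while the mixed conjugates are computed explicitly (your $\bigl((y_j^{-1}x_{\lambda(j)}^{-1}y_j)\vee y_j^{-1}\bigr)\wedge e^{-1}$ is just the distributed form of the paper's formula $(0)$) and placed in $N$ via normality of $F$ together with convexity, i.e.\ upward closedness of $N$ in $G^-$. The only difference is that you spell out both conjugates and both cases, where the paper does one calculation and appeals to symmetry.
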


\begin{proof}
Since multiplication and order are defined coordinatewise on $(G^-)^I$, it is
obvious that $N^I$ is a filter. We need to show that $N^I$ is closed under
conjugates. Now, conjugation by an element of $(G^-)^I$ also proceeds
coordinatewise, so $N^I$ is closed under all such conjugates. Take an element
$\langle f_j\colon j\in J\rangle$ and consider
$\langle f_j\colon j\in J\rangle\ld
\langle a_i^{-1}\colon i\in I\rangle\cdot\langle f_j\colon j\in J\rangle$, for
some $\langle a_i^{-1}\colon i\in I\rangle\in N^I$. This is equal to
$\langle f_j\colon j\in J\rangle\ld
\langle a_{\lambda(j)}^{-1}f_j\vee e\colon j\in J\rangle$, which in turn equals
$\langle b_i^{-1}\colon i\in I\rangle$, where
$$
b_i^{-1} =\begin{cases}
\bigl(f_{\rho^{-1}(i)}^{-1}a_{\lambda\circ\rho^{-1}(i)}^{-1}f_{\rho^{-1}(i)}\wedge
e^{-1}\bigr) \vee f_{\rho^{-1}(i)}^{-1} &
\text{ if } \rho^{-1}(i) \text{ is defined}\\
e^{-1} & \text{ otherwise}
\end{cases}\eqno(0)\\
$$
as one can verify by a series of simple calculations. Notice that
$f_{\rho^{-1}(i)}^{-1}a_{\lambda\circ\rho^{-1}(i)}^{-1}f_{\rho^{-1}(i)}$
is a conjugate of a member of $N$ and thus belongs to $F$ by
normality. Therefore
$f_{\rho^{-1}(i)}^{-1}a_{\lambda\circ\rho^{-1}(i)}^{-1}f_{\rho^{-1}(i)}\wedge
e^{-1}$ belongs to $N$. Thus, by upward closedness of $N$ we get
that $b_i\in N$, for each $i\in I$, and so $\langle b_i^{-1}\colon
i\in I\rangle\in N^I$. This shows that $N^I$ is normal, as required.
\end{proof}

The converse of Lemma~\ref{hyper-filters} is also true, but it will
be useful to define a technical notion before we show it. Namely,
for a proper normal filter $N$ of a kite
$K_{I,J}^{\lambda,\rho}(\mathbf{G})$, and a $k\in I$, we define the
restriction of $N$ to a coordinate $k$ as the set of elements $a\in
N$ with $a(i) \neq e^{-1}$ if{}f $i = k$.

\begin{lemma}\label{restrict}
Let $\mathbf{G}$ be an $\ell$-group,
$K_{I,J}^{\lambda,\rho}(\mathbf{G})$ a kite, and $N$ a normal filter of
$K_{I,J}^{\lambda,\rho}(\mathbf{G})$. For any $k\in I,$ the
restriction of $N$ to $k$ is a normal filter of $\mathbf{G}^-$, and therefore
the negative part of a convex normal subgroup of $\mathbf{G}$.
\end{lemma}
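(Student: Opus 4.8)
The statement claims that for any $k \in I$, the restriction $N_k$ of a normal filter $N$ to the coordinate $k$ is a normal filter of $\mathbf{G}^-$. Note first that $N_k$ consists of those $a \in N$ that are supported on coordinate $k$ alone, together with (to make it a genuine filter of $\mathbf{G}^-$) the element $e^{-1}$; more precisely, one identifies $N_k$ with a subset of $G^-$ via the $k$-th coordinate. I would proceed in three stages: (i) show $N_k$ is upward closed in $\mathbf{G}^-$; (ii) show $N_k$ is closed under multiplication, hence a filter of $\mathbf{G}^-$; (iii) show $N_k$ is closed under conjugation by arbitrary elements of $G^-$, hence a normal filter; then invoke the standard correspondence between normal filters of $\mathbf{G}^-$ and negative parts of convex normal subgroups of $\mathbf{G}$.

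For (i) and (ii), the key observation is that within $(G^-)^I$ the operations $\cdot$, $\wedge$, $\vee$ are all coordinatewise, and that $(G^-)^I$ is itself a (normal) filter of the kite by Lemma~\ref{up-max-norm}, with $N \cap (G^-)^I = N$ since $N$ is proper (a proper normal filter cannot contain any element of $(G^+)^J$, as $1 = (e^{-1})^I \leq y$ for such $y$ would force $1 \in N$; more to the point the product of any two elements of $(G^+)^J$ is $0$). So $N$ is a normal filter of the residuated lattice $(G^-)^I \cong (\mathbf{G}^-)^I$. Upward closure of $N_k$ in $\mathbf{G}^-$ then follows: if $a \in N_k$ with support $\{k\}$ and $a \leq b$ in $\mathbf{G}^-$, then the one-dimensional element $\langle b \rangle$ supported on $k$ dominates $a$ coordinatewise, so lies in $N$, hence in $N_k$. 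Closure under multiplication is immediate from coordinatewise multiplication: the product of two elements supported on $\{k\}$ is again supported on $\{k\}$ (or is $e^{-1}$).

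The main obstacle is (iii): closure under conjugation by elements of $G^-$. Here I would exploit that conjugation in $\mathbf{G}$ by an element of $G^-$, lifted coordinatewise, is conjugation by an element of $(G^-)^I$ which is one-dimensional only in the sense that we conjugate at a single coordinate — concretely, given $c \in G^-$, form the element $\gamma \in (G^-)^I$ which is $c$ at coordinate $k$ and $e^{-1}$ elsewhere; conjugation of a $k$-supported $a \in N$ by $\gamma$ stays $k$-supported and reproduces, at coordinate $k$, the conjugate $c^{-1} a c \wedge e^{-1}$ in $\mathbf{G}^-$. Since $\gamma \in (G^-)^I$ and $N$ is normal in the kite, this conjugate lies in $N$, hence in $N_k$. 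This shows $N_k$ is closed under all conjugates by elements of $G^-$, so it is a normal filter of $\mathbf{G}^-$. I expect the only delicate point to be bookkeeping: verifying that the kite's conjugation operation $y \ld xy \wedge 1$, when $y$ is the chosen element of $(G^-)^I$, really does restrict coordinatewise to the $\mathbf{G}^-$-conjugate at coordinate $k$ — but this is a direct unwinding of the coordinatewise definitions of $\cdot$ and $\ld$ on $(G^-)^I$. Finally, a normal filter of $\mathbf{G}^-$ is exactly the negative part of a convex normal subgroup of $\mathbf{G}$ (as recalled in Section 2 via the term-equivalence of $\ell$-groups and residuated lattices satisfying (\ref{lg}), together with Lemma~\ref{cong}), which completes the proof.
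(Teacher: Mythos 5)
Your proposal is correct and follows essentially the same route as the paper: properness forces $N\subseteq (G^-)^I\cong(\mathbf{G}^-)^I$, where all operations are coordinatewise, and the restriction to coordinate $k$ is then a normal filter of $\mathbf{G}^-$ (the paper phrases this as taking the $k$-th projection, while you verify upward closure, multiplicativity and normality directly via one-dimensional conjugators, which is just a more explicit version of the same argument). The only blemish is the garbled clause ``$1=(e^{-1})^I\leq y$ would force $1\in N$'' in your properness argument, but it is harmless since your second reason (the product of two elements of $(G^+)^J$ is $0$) is the correct one.
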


\begin{proof}
Since $N$ is proper, it is contained in $(G^-)^I$, which is
a direct power of $\mathbf{G}^-$. Then, the restriction of $N$ to $k$ is just
the $k$-th projection of $N$, and the claim follows.
\end{proof}

The next lemma shows that subdirectly irreducible kites can only
arise out of subdirectly irreducible $\ell$-groups, and a first
characterization of subdirectly irreducible kites follows.

\begin{lemma}\label{si-only-if-si}
Let $\mathbf{G}$ be an $\ell$-group, and $K_{I,J}^{\lambda,\rho}(\mathbf{G})$ a
kite. If $K_{I,J}^{\lambda,\rho}(\mathbf{G})$ is subdirectly irreducible, so is
$\mathbf{G}$.
\end{lemma}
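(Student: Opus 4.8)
The plan is to argue contrapositively, using the correspondence between congruences and normal filters (Lemma~\ref{cong}) together with the structural lemmas just established about normal filters in kites. Suppose $\mathbf G$ is not subdirectly irreducible. Then $\mathbf G$ has at least two distinct minimal nontrivial convex normal subgroups, say $F_1$ and $F_2$, whose intersection is trivial; equivalently, the monolith of the congruence lattice of $\mathbf G$ is absent. I would first translate this into the language of convex normal subgroups: there exist convex normal subgroups $F_1, F_2$ of $\mathbf G$ with $F_1 \cap F_2 = \{e\}$ and both $F_1, F_2$ nontrivial. Restricting to the negative cone, set $N_1 = F_1|_{G^-}$ and $N_2 = F_2|_{G^-}$; these are proper normal filters of $\mathbf G^-$, and $N_1 \cap N_2 = \{e^{-1}\}$.

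Next I would lift these to the kite via Lemma~\ref{hyper-filters}: the sets $N_1^I$ and $N_2^I$ are normal filters of $K_{I,J}^{\lambda,\rho}(\mathbf G)$. Clearly $N_1^I \cap N_2^I = (N_1 \cap N_2)^I = \{(e^{-1})^I\} = \{1\}$, which corresponds (via the lattice isomorphism of Lemma~\ref{cong}) to the equality $\theta_{N_1^I} \wedge \theta_{N_2^I} = \Delta$, the identity congruence. It remains to check that both $N_1^I$ and $N_2^I$ are nontrivial, i.e., strictly larger than $\{1\}$: since $N_1$ and $N_2$ are nontrivial normal filters of $\mathbf G^-$, each contains some $a^{-1} \neq e^{-1}$, and then the corresponding one-dimensional element of $(G^-)^I$ (with $a^{-1}$ in one coordinate and $e^{-1}$ elsewhere) lies in $N_k^I \setminus \{1\}$. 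Hence $\theta_{N_1^I}$ and $\theta_{N_2^I}$ are two distinct nontrivial congruences with trivial meet, so $K_{I,J}^{\lambda,\rho}(\mathbf G)$ is not subdirectly irreducible.

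The one subtlety worth a remark is the degenerate case: if $I = \emptyset$ then $(G^-)^I$ is a singleton and the whole construction collapses to the two-element Boolean algebra, which \emph{is} subdirectly irreducible regardless of $\mathbf G$; but in that situation $\mathbf G$ may be trivial, and a trivial algebra is (by the usual convention) not subdirectly irreducible, so the implication is vacuously fine, or one simply assumes $I \neq \emptyset$ as is implicit throughout. Assuming $I \neq \emptyset$, the argument above goes through verbatim. I expect the main (though modest) obstacle to be purely bookkeeping: making sure that ``$\mathbf G$ not subdirectly irreducible'' is correctly unpacked as the existence of two convex normal subgroups with trivial intersection (using the congruence/convex-normal-subgroup correspondence for $\ell$-groups, the analogue of Lemma~\ref{cong}), and that the passage $F \mapsto F|_{G^-} \mapsto (F|_{G^-})^I$ preserves both nontriviality and the trivial-intersection property — all of which are immediate from the coordinatewise definitions.
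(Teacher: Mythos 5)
There is a genuine (though easily repaired) gap at the very first step: you unpack ``$\mathbf G$ is not subdirectly irreducible'' as ``there exist two nontrivial convex normal subgroups $F_1,F_2$ with $F_1\cap F_2=\{e\}$''. This is not equivalent. Failure of subdirect irreducibility only gives a \emph{family} (possibly infinite) of nontrivial convex normal subgroups with trivial intersection; it does not guarantee two of them meeting trivially. Indeed, in any totally ordered $\ell$-group the convex $\ell$-subgroups form a chain, so any two nontrivial ones intersect nontrivially, yet such a group need not be subdirectly irreducible (take, e.g., finitely supported real sequences ordered lexicographically: the convex subgroups $C_k=\{a\colon a_i=0 \text{ for } i<k\}$ are a strictly descending chain with trivial intersection and no minimal nontrivial member). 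Your argument as written simply does not apply to such $\mathbf G$. The paper's proof is the same argument but done correctly: it takes an arbitrary family $\{\mathbf M_s\colon s\in S\}$ of nontrivial convex normal subgroups with $\bigcap_s M_s=\{e\}$, lifts each to the normal filter $(M_s^-)^I$ via Lemma~\ref{hyper-filters}, and checks coordinatewise that $\bigcap_s (M_s^-)^I=\{1\}$. All the remaining steps in your proposal (lifting via Lemma~\ref{hyper-filters}, the coordinatewise computation of the intersection, nontriviality witnessed by one-dimensional elements) go through verbatim for an arbitrary family, so replacing ``two'' by ``a family'' repairs the proof and recovers exactly the paper's argument.

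A smaller remark: your handling of the degenerate case is also slightly off. If $I=\emptyset$ the kite is the two-element Boolean algebra for \emph{every} $\mathbf G$, including a nontrivial non--subdirectly-irreducible one, so the implication is not ``vacuously fine'' there; one really must read the lemma with $I\neq\emptyset$ (an assumption the paper's proof makes implicitly as well, since it picks a coordinate $k\in I$). This is a side issue, not the main defect.
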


\begin{proof}
We will prove the contrapositive. Suppose $\mathbf{G}$ is not
subdirectly irreducible. Then, there exists a set
$\{\mathbf{M}_s\colon s\in S\}$ of nontrivial convex normal
subgroups of $\mathbf{G}$, such that $\bigcap_{s\in S}M_s = \{e\}$.
Let $M^-_s = M_s|_{G^-}$. Then, by Lemma~\ref{hyper-filters},
$(M^-_s)^I$ is a normal filter of
$K_{I,J}^{\lambda,\rho}(\mathbf{G})$, for each $s\in S$. Suppose
$\langle a_i\colon i\in I\rangle$ belongs to $(M^-_s)^I$ for each
$s\in S$. Then, for any coordinate $k\in I$ we have that $a_k\in
M^-_s$ for all $s\in S$, and thus $a_k = e$. Therefore,
$\bigcap_{s\in S}(M^-_s)^I = \{e^I\}$, showing that the set
$\{(M^-_s)^I\colon s\in S\}$ of nontrivial normal filters of
$K_{I,J}^{\lambda,\rho}(\mathbf{G})$ intersects trivially. Thus,
$K_{I,J}^{\lambda,\rho}(\mathbf{G})$ is not subdirectly irreducible,
proving the claim.
\end{proof}

\begin{theorem}\label{si-kites}
Let $\mathbf{G}$ be an $\ell$-group, and
$K_{I,J}^{\lambda,\rho}(\mathbf{G})$ a kite.
The following are equivalent:
\begin{enumerate}
\item $\mathbf{G}$ is subdirectly irreducible and for all $i,j\in I$ there exists
  $m\in\omega$ such that
$(\rho\circ\lambda^{-1})^m(i) = j$ or
$(\lambda\circ\rho^{-1})^m(i) = j$.
\item $K_{I,J}^{\lambda,\rho}(\mathbf{G})$ is subdirectly irreducible.
\end{enumerate}
\end{theorem}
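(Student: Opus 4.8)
The plan is to exploit the lattice isomorphism between congruences and normal filters (Lemma~\ref{cong}): a kite is subdirectly irreducible if and only if its lattice of normal filters has a least nontrivial element (a monolith). Throughout, write $\sigma=\rho\circ\lambda^{-1}$ for the partial bijection on $I$ with domain $\lambda(J)$ and range $\rho(J)$; since $\sigma^{-1}=\lambda\circ\rho^{-1}$, condition (1) says exactly that the orbit equivalence relation generated by $\sigma$ on $I$ is the total relation, i.e.\ that $I$ is a single $\sigma$-orbit. We may assume $\mathbf G$ is nontrivial; if $\mathbf G$ is trivial the kite is the two-element Boolean algebra and the statement is degenerate.

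\emph{(2)$\Rightarrow$(1).} By Lemma~\ref{si-only-if-si}, $\mathbf G$ is subdirectly irreducible, so only the orbit condition needs proof. Suppose it fails; then $I$ splits into at least two disjoint $\sigma$-orbits, and we pick two of them, $O_1\neq O_2$. For $t\in\{1,2\}$ set $F_t=\{a\in(G^-)^I:a(i)=e^{-1}\text{ for all }i\notin O_t\}$. Each $F_t$ is a normal filter: it is obviously a convex subalgebra of the $0$-free reduct of $(G^-)^I$, conjugation by tail elements acts coordinatewise and cannot enlarge supports, and conjugation by a head element, computed via formula~$(0)$ from the proof of Lemma~\ref{hyper-filters}, has nontrivial $i$-th coordinate only when $\sigma^{-1}(i)$ lies in the support of the conjugated element — hence, as $O_t$ is closed under $\sigma$ and $\sigma^{-1}$, only when $i\in O_t$. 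Since $\mathbf G$ is nontrivial, $F_1$ and $F_2$ are nontrivial, while $F_1\cap F_2=\{1\}$ because $O_1\cap O_2=\emptyset$. Thus the kite has two nontrivial normal filters meeting trivially, contradicting subdirect irreducibility.

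\emph{(1)$\Rightarrow$(2).} Let $\mathbf M$ be the monolith of $\mathbf G$ and $M^-=M|_{G^-}$ the associated normal filter of $\mathbf G^-$. I claim the monolith of the kite is
\[
W=\{a\in(M^-)^I:a(i)=e^{-1}\text{ for all but finitely many }i\in I\}.
\]
That $W$ is a normal filter is checked coordinatewise; the two points needing attention are that $M^-$ is \emph{convex} in $G^-$, so the ``$\vee f^{-1}$'' corrections appearing in formula~$(0)$ cannot carry a coordinate out of $M^-$, and that conjugation shifts supports only by $\sigma^{\pm1}$, so finite support is preserved. That $W$ is the \emph{least} nontrivial normal filter is the substantive part. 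Let $N\neq\{1\}$ be a normal filter. If $N$ meets $(G^+)^J$, upward closure gives $N\supseteq(G^-)^I\supseteq W$. Otherwise $N\subseteq(G^-)^I$, and we choose $a\in N$ with $a(i_0)=c\neq e$ for some $i_0$; by upward closure $N$ contains the one-dimensional element $u$ equal to $c$ at $i_0$ and $e^{-1}$ elsewhere. Conjugating $u$ by the head element whose $\lambda^{-1}(i_0)$-th coordinate is $c^{-1}$ (the other coordinates arbitrary) yields, by formula~$(0)$ and the identity $f^{-1}cf\vee f^{-1}=c$ with $f=c^{-1}$, the one-dimensional element equal to $c$ at $\sigma(i_0)$ and $e^{-1}$ elsewhere; symmetrically, right conjugation produces the analogous element at $\sigma^{-1}(i_0)$. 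As $I$ is a single $\sigma$-orbit, iterating transports this one-dimensional element to every coordinate $k\in I$. Taking powers, conjugating by tail elements and multiplying (all legitimate inside a normal filter), and then using upward closure, we conclude that for every $k\in I$ and every $m^{-1}\in M^-$ the one-dimensional element equal to $m^{-1}$ at $k$ and $e^{-1}$ elsewhere lies in $N$; here one uses that the convex $\ell$-subgroup of $\mathbf G$ generated by $c$ contains $\mathbf M$, because $\mathbf G$ is subdirectly irreducible and $c\neq e$. Since every finitely-supported member of $(M^-)^I$ is a finite product of such one-dimensional elements, $N\supseteq W$. Hence $W$ is a monolith and the kite is subdirectly irreducible.

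The main obstacle I anticipate is the bookkeeping around formula~$(0)$: one must confirm both that $W$ is genuinely closed under conjugation by head elements (which hinges on convexity of $M^-$, and is precisely why the ``full power'' $(M^-)^I$ is \emph{not} the monolith when $I$ is infinite) and that the orbit-transport of one-dimensional elements does not pick up unwanted extra coordinates. Once these coordinatewise computations are in hand, the equivalence follows from the congruence/normal-filter correspondence exactly as above.
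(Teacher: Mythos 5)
Your proof is correct and follows essentially the same route as the paper: for (2)$\Rightarrow$(1) you exhibit two support-separated nontrivial normal filters attached to two disjoint connected components (the paper's $N^{I_0},N^{I_1}$ are your $F_1,F_2$, with monolith-values instead of all of $G^-$ — an immaterial difference), and for (1)$\Rightarrow$(2) you build a least nontrivial normal filter from the monolith $\mathbf{M}$ of $\mathbf{G}$ and transport one-dimensional elements along the $\sigma$-orbit. Two points of divergence are worth recording. First, you transport by conjugating with a head element whose relevant coordinate is $c^{-1}$, whereas the paper transports by iterated double negations $\ln\ln$ and $\rn\rn$ via Lemma~\ref{rotations}; both are legitimate operations on the $1$-class of a congruence (for the paper's version one uses $\ln\ln 1=1$), and your identity $(f^{-1}cf\wedge e)\vee f^{-1}=c$ for $f=c^{-1}$ checks out against formula $(0)$. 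Second, you identify the monolith as the \emph{finitely supported} part $W$ of $(M^-)^I$, while the paper asserts that all of $N^I$ is the least nontrivial normal filter and finishes by ``taking appropriate meets''; for infinite $I$ the paper's literal claim fails (a normal filter generated by one element is not closed under infinite meets), and the paper's own Case~(1) example names the finitely supported filter as the smallest one, so your identification is the sharper and, in the infinite-dimensional case, the correct formulation — what matters for subdirect irreducibility is only that every nontrivial normal filter contains the fixed nontrivial $W$, which is exactly what you prove. One small slip to fix: you justify $M^-\subseteq N$ at a coordinate by saying the convex $\ell$-subgroup of $\mathbf{G}$ generated by $c$ contains $\mathbf{M}$; subdirect irreducibility of $\mathbf{G}$ only guarantees this for the convex \emph{normal} subgroup (equivalently, the normal filter of $\mathbf{G}^-$, cf.\ Lemma~\ref{restrict}) generated by $c$. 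Since your listed operations include conjugation by tail elements, i.e.\ coordinatewise conjugation in $G^-$, what you actually generate is that normal closure, so the argument stands once the word ``normal'' is inserted.
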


\begin{proof}
To prove that (1) implies (2), let $\mathbf{M}$ be the smallest
nontrivial convex normal subgroup of $\mathbf{G}$, and $N$ be the
restriction of $M$ to the negative cone of $\mathbf{G}$. Then, $N$
is the smallest nontrivial normal filter of $\mathbf{G}^-$. By
Theorem~\ref{hyper-filters} we have that $N^I$ is a normal filter of
$K_{I,J}^{\lambda,\rho}(\mathbf{G})$, so it remains to prove that
$N^I$ is the smallest nontrivial such. Since for any $a\in
N^I\setminus\{1\}$ there is a one-dimensional element $a'$ with
$a\leq a'<1$, it suffices to prove that any one-dimensional element
$b\in N^I\setminus\{1\}$ generates $N^I$. Without loss of generality
assume $b = \langle b^{-1}_0,e^{-1},\dots\rangle$; this is always
achievable by a suitable re-ordering of $I$, regardless of its
cardinality. Observe that $b^{-1}_0$ generates $N$, since $N$ is the
smallest nontrivial normal filter of $\mathbf{G}^-$. It follows that
$b$ generates all members of $N^I$ of the form $\langle a^{-1},
e^{-1},\dots\rangle$, using only conjugates of the same form.
Consider an arbitrary $i\in I$. By assumption, there is $m\in\omega$
with $\rho(\lambda^{-1})^m(0) = i$ or $\lambda(\rho^{-1})^m(0) =
i$. Now, repeating $m$ times the calculation from the proof of
Lemma~\ref{rotations}, we obtain that for an element $u = \langle
a^{-1}, e^{-1},\dots\rangle$, one of the following must be the case:
\begin{itemize}
\item if $\rho(\lambda^{-1})^m(0) = i$, then
$\underbrace{\ln\dots\ln}_{\text{$2m$-times}}u= \langle
e^{-1},\dots,e^{-1},a^{-1},e^{-1},\dots\rangle$,
\item if $\lambda(\rho^{-1})^m(0) = i$, then
$\underbrace{\rn\dots\rn}_{\text{$2m$-times}}u = \langle
e^{-1},\dots,e^{-1},a^{-1},e^{-1},\dots\rangle$,
\end{itemize}
where $a^{-1}$ occurs at a coordinate $i$; again by a suitable
renumbering of $I$ we can assume it to be the $m$-th coordinate. By
taking appropriate meets it then follows that every element of $N^I$
can be generated, which proves the claim.

For the the converse, by Lemma~\ref{si-only-if-si} we can assume
$\mathbf{G}$ is subdirectly irreducible. Then, suppose there are
$i,j\in I$ such that for all $m\in \omega$ we have
$\rho(\lambda^{-1})^m(i) \neq j$ and $\lambda(\rho^{-1})^m(i) \neq
j$. We will call such $i$ and $j$ \emph{disconnected}; otherwise,
$i$ and $j$ will be called \emph{connected}. If all distinct members
of a $K\subseteq I$ are connected, we will call $K$ a
\emph{connected component} of $I$. Now, let $I_0$ and $I_1$ be
connected components of $I$ such that $i\in I_0$ and $j\in I_1$.
Clearly, $I_0$ and $I_1$ are disconnected, that is no member of
$I_0$ is connected to any member of $I_1$. We will prove that
$N^{I_0}\cap N^{I_1} = \{1\}$, from which it follows immediately
that $K_{I,J}^{\lambda,\rho}(\mathbf{G})$ is not subdirectly
irreducible. In fact, it suffices to show that for an element $u =
\langle a^{-1}_i\colon i\in I\rangle$ such that $a^{-1}_i = e^{-1}$
for all $i\notin I_0$, and for any element $b$, the conjugate $b\ld
ub$ has $b\ld ub(i) = e^{-1}$ if $i\notin I_0$, and the same holds
for $bu\rd b$. Take $b = \langle b_j\colon j\in J\rangle$. We have
\begin{align*}
b\ld ub &=
\langle b_j\colon j\in J\rangle\ld
\langle a^{-1}_i\colon i\in I\rangle\cdot
\langle b_j\colon j\in J\rangle\\
&= \langle b_j\colon j\in J\rangle\ld
\langle a_{\lambda(j)}^{-1}b_j\vee e\colon j\in J\rangle\\
&= \langle c_i^{-1}\colon i\in I\rangle
\end{align*}
where  $c_i^{-1} =\begin{cases}
b_{\rho^{-1}(i)}^{-1}
(a_{\lambda(\rho^{-1}(i))}b_{\rho^{-1}(i)}\vee e)
\wedge e^{-1} &
\text{ if } \rho^{-1}(i) \text{ is defined}\\
e^{-1} & \text{ otherwise.}
\end{cases}$\\
Now, by assumption $a^{-1}_i = e^{-1}$ for $i\notin I_0$, and
by connectedness, $\lambda(\rho^{-1}(i))\notin I_0$ if $i\notin I_0$.
Therefore, $c_i^{-1}$ can be different from $e^{-1}$ only if
$i\in I_0$, and thus $b\ld ub$ is of the required form. The claim for the other
conjugate follows by symmetry.
\end{proof}

If $I$ and $J$ are finite, the form of subdirectly irreducible kites is even
more restricted than Theorem~\ref{si-kites} explicitly states. In such a case,
$I$ can only be the same size as $J$ or bigger by one,
and $\lambda$ and $\rho$ are essentially determined by the sizes
of $I$ and $J$.

\begin{lemma}\label{si-sizes}
If $\mathbf{G}$ is an $\ell$-group,
$K_{I,J}^{\lambda,\rho}(\mathbf{G})$ a subdirectly irreducible kite,
and $I$ and $J$ are finite, then $K_{I,J}^{\lambda,\rho}(\mathbf{G})$
is isomorphic to one of:
\begin{enumerate}
\item $K_{n,n}^{\lambda,\rho}(\mathbf{G})$, with $\lambda(j) = j$ and
$\rho(j) = j+1\ (\mathrm{mod}\ n)$,
\item $K_{n+1,n}^{\lambda,\rho}(\mathbf{G})$, with $\lambda(j) = j$ and
$\rho(j) = j+1$.
\end{enumerate}
\end{lemma}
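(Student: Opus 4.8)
The plan is to read off the combinatorial restrictions on $I$, $J$, $\lambda$, $\rho$ from Theorem~\ref{si-kites} and then to normalise the pair $(\lambda,\rho)$ by relabelling the two index sets. I would start by recording what is already used implicitly in the proof of Theorem~\ref{si-kites}: for any bijections $\phi\colon I\to I'$ and $\psi\colon J\to J'$, transporting coordinates gives an isomorphism $K_{I,J}^{\lambda,\rho}(\mathbf{G})\cong K_{I',J'}^{\phi\circ\lambda\circ\psi^{-1},\,\phi\circ\rho\circ\psi^{-1}}(\mathbf{G})$, because every operation of a kite is built from the coordinatewise $\ell$-group operations together with $\lambda$ and $\rho$, and all of these commute with relabelling. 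So it suffices to show that, for a suitable choice of $\phi$ and $\psi$, the relabelled $\lambda$ and $\rho$ acquire one of the two prescribed shapes.

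Next I would study the partial injection $\sigma:=\rho\circ\lambda^{-1}$ of $I$. Since $\lambda$ and $\rho$ are total on $J$, the domain of $\sigma$ is $\lambda(J)$, its range is $\rho(J)$, and its inverse is $\lambda\circ\rho^{-1}$. Because $I$ is finite, the functional graph of $\sigma$ is a disjoint union of finitely many cycles $i_0\mapsto\dots\mapsto i_{p-1}\mapsto i_0$ and finitely many chains $i_0\mapsto\dots\mapsto i_p$ with $i_0\notin\mathrm{ran}\,\sigma$ and $i_p\notin\mathrm{dom}\,\sigma$; two indices lie in the same component exactly when some $\sigma^m$ or $\sigma^{-m}$ carries one to the other. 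Thus, since $K_{I,J}^{\lambda,\rho}(\mathbf{G})$ is subdirectly irreducible, Theorem~\ref{si-kites} yields (among other things) that $I$ is a single $\sigma$-component. If that component is a cycle of length $n$, then $\sigma$ is total and surjective, so $\lambda(J)=I=\rho(J)$; hence $\lambda$ and $\rho$ are bijections, $|I|=|J|=n$, and relabelling $I$ along the cycle by some $\phi$ while setting $\psi:=\phi\circ\lambda$ turns the relabelled $\lambda=\phi\circ\lambda\circ\psi^{-1}$ into the identity and the relabelled $\rho=\phi\circ\sigma\circ\phi^{-1}$ into $j\mapsto j+1\ (\mathrm{mod}\ n)$, which is case (1). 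If instead the single component is a chain with vertices $i_0,\dots,i_n$, then $\lambda(J)=\{i_0,\dots,i_{n-1}\}$ and $\rho(J)=\{i_1,\dots,i_n\}$, so $|J|=n$ and $|I|=n+1$, and relabelling $i_m\mapsto m$ (again with $\psi:=\phi\circ\lambda$) turns the relabelled $\lambda$ into the identity and the relabelled $\rho$ into $j\mapsto j+1$, which is case (2).

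The only point that needs care is the bookkeeping with the partial maps: confirming $\mathrm{dom}\,\sigma=\lambda(J)$ and $\mathrm{ran}\,\sigma=\rho(J)$, that the relation ``$\sigma^m(i)=j$ or $\sigma^{-m}(i)=j$ for some $m\in\omega$'' is an equivalence relation whose classes are exactly the cycles and chains of $\sigma$, and that the two relabellings are mutually compatible --- which is why one takes $\psi=\phi\circ\lambda$, forcing the relabelled $\lambda$ to collapse to the identity and the relabelled $\rho$ to become $\phi\circ\sigma\circ\phi^{-1}$. The structural fact underlying everything --- a partial injection of a finite set decomposes into cycles and chains --- is entirely standard, so beyond this careful checking there is no real obstacle.
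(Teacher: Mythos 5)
Your proposal is correct, and all the pieces fit: the relabelling isomorphism $K_{I,J}^{\lambda,\rho}(\mathbf{G})\cong K_{I',J'}^{\phi\lambda\psi^{-1},\phi\rho\psi^{-1}}(\mathbf{G})$ does hold (every operation is coordinatewise up to $\lambda,\rho$), $\mathrm{dom}\,\sigma=\lambda(J)$ and $\mathrm{ran}\,\sigma=\rho(J)$ are right, and since $\sigma$ is a partial injection, any word in $\sigma,\sigma^{-1}$ reduces to a pure power, so the reachability relation in Theorem~\ref{si-kites} is exactly the component relation of the cycle/chain decomposition. The route is, however, organized differently from the paper's. The paper argues by explicit recursive renumbering: in the case $|I|=|J|$ it sets $\lambda(j)=j$, uses the connectivity condition only to exclude fixed points of $\rho$, and then asserts a renumbering making $\rho$ the cyclic successor; in the case $|J|<|I|$ it builds the numbering step by step (two alternating cases) and lets finiteness terminate the recursion, which simultaneously shows $|I|=|J|+1$. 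You instead normalise globally: the single structural fact that a partial injection of a finite set splits into cycles and chains, plus subdirect irreducibility forcing $I$ to be one component, gives both normal forms at once via $\psi=\phi\circ\lambda$. Your version is shorter and uniform, and it makes explicit a point the paper's first case leaves implicit --- fixed-point-freeness of $\rho$ alone would still allow $\rho\circ\lambda^{-1}$ to decompose into several disjoint cycles, and it is again the connectivity (single component) that rules this out and justifies the renumbering to $j\mapsto j+1\ (\mathrm{mod}\ n)$. The paper's recursion, on the other hand, is self-contained and does not presuppose the cycle/chain decomposition; either way the only inputs are Theorem~\ref{si-kites} and bookkeeping, so your argument is a valid (and arguably cleaner) substitute.
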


\begin{proof}
Assume first that $|I| = n = |J|$. Then, since $\lambda$ and $\rho$
are injections, we can number the elements of $I$ and $J$ so that
$\lambda(j) = j$. If $\rho(k) = k$ for some $k$, then $\{k\}$ is a
connected component of $I$ and thus, by Theorem~\ref{si-kites},
$K_{I,J}^{\lambda,\rho}(\mathbf{G})$ is not subdirectly irreducible,
contradicting the assumption. So, $\rho(j) \neq j$, for every $j$.
We can then renumber $I$ and $J$ so that $\rho(j) = j+1\
(\mathrm{mod}\ n)$.

For the second part, assume $|J|<|I|$. Then, there is $i\in I$
not in the range of $\rho$. We start numbering $I$ by putting $i = 0$.
By Theorem~\ref{si-kites} we get that
$I$ is connected, and so $0\in I$ is in the range of $\lambda$.
We start numbering $J$ by putting $\lambda(0) = 0$.
Now, $\rho(0)\neq 0$, so we put $\rho(0) = 1$. Then, there are
two cases to consider.

\medskip
\noindent
\emph{Case 1.\/} $\lambda^{-1}(1)$ is not defined.
Then, if $J$ contains a nonempty subset $J'$ disconnected with $\{0,1\}$,
Theorem~\ref{si-kites} yields a contradiction. Thus,
$J = \{0,1\}$ and, since $\lambda$ and $\rho$ are injections,
$I = \{0\}$. The claim holds in this case.

\medskip
\noindent
\emph{Case 2.\/} $\lambda^{-1}(1)$ is defined. Then, we extend the numbering of
$J$ putting $\lambda^{-1}(1) = 1$. Since $J|<|I|$, there
is $j\in J\setminus\{0,1\}$. Now, if $\rho(1) = 0$, then $\{0,1\}$
and $j$ are disconnected, contradicting subdirect irreducibility.
Therefore, $\rho(1)\neq 0$, and by injectiveness $\rho(1)\neq 1$.
We then put $\rho(1) = 2$ extending the numbering of $I$.

Then, we repeat the procedure recursively, and by finiteness it must terminate.
By inspection of the two cases, it is clear that it terminates in Case~1,
for $j\in J$ such that $\lambda^{-1}(j)$ is not defined. Observe that
$j = n+1 = \rho(n)$, where $n$ was numbered at the immediately preceding stage
employing Case~2. This results in the required numbering of $I$ and $J$ and at
the same time shows that $|I| = |J|+1$.
\end{proof}

Kites of the form $K_{I,J}^{\lambda,\rho}(\mathbf{G})$ with
$I$ and $J$ finite will be called \emph{finite-dimensional}. We will deal with
these in more detail in the next section.
Next, we focus on subdirectly irreducible kites with $I$ or $J$ infinite.

\begin{lemma}\label{countable-dim}
Let  $K_{I,J}^{\lambda,\rho}(\mathbf{G})$ be a subdirectly
irreducible kite. Then, $I$ and $J$ are at most countably infinite.
\end{lemma}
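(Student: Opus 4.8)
The plan is to use the connectedness characterization from Theorem~\ref{si-kites} to bound the size of $I$, and then transfer the bound to $J$ via the injections $\lambda,\rho$. Recall that if $K_{I,J}^{\lambda,\rho}(\mathbf{G})$ is subdirectly irreducible, then for all $i,j\in I$ there is $m\in\omega$ with $(\rho\circ\lambda^{-1})^m(i) = j$ or $(\lambda\circ\rho^{-1})^m(i) = j$. Fix any $i_0\in I$. The point is that every $i\in I$ is reachable from $i_0$ by finitely many applications of the partial maps $\rho\circ\lambda^{-1}$ and $\lambda\circ\rho^{-1}$ (and their finite iterates), so $I$ equals the set of values obtained from $i_0$ by finite words in these two partial functions.

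The key step is a counting argument: first I would note that there are only countably many finite words over the two-letter alphabet $\{\rho\circ\lambda^{-1},\,\lambda\circ\rho^{-1}\}$, and each such word, applied to the fixed starting point $i_0$, yields at most one element of $I$ (the partial functions are single-valued, being composites of injections and partial inverses of injections). Hence the map sending a finite word to the element of $I$ it produces from $i_0$ (when defined) is a surjection from a countable set onto $I$, so $|I|\leq\aleph_0$. Strictly speaking I should also observe $I$ is nonempty in the nontrivial case; if $I=\emptyset$ the statement is vacuous, so assume $I\neq\emptyset$ and pick $i_0$.

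Once $|I|\leq\aleph_0$ is established, the bound on $J$ is immediate from the standing hypothesis $|J|\leq|I|$ in the definition of a kite (equivalently, from $\lambda\colon J\to I$ being an injection). Thus both $I$ and $J$ are at most countably infinite, as claimed.

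I do not expect a genuine obstacle here; the only thing to be careful about is making the ``reachable by finite words'' statement precise — that $I$ really is exhausted by the orbit of $i_0$ under the monoid of partial maps generated by $\rho\circ\lambda^{-1}$ and $\lambda\circ\rho^{-1}$ — which follows directly by unwinding condition (1) of Theorem~\ref{si-kites} (each $j\in I$ is some iterate of one of the two generators applied to $i_0$, hence a particularly simple word). The cardinality bookkeeping is then routine.
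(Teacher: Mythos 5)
Your proof is correct and rests on the same key ingredient as the paper's, namely the connectivity condition of Theorem~\ref{si-kites}, whose orbits under iterates of $\rho\circ\lambda^{-1}$ and $\lambda\circ\rho^{-1}$ are countable. The only difference is presentational: you argue directly that $I$ is exhausted by the countable orbit of a single index $i_0$ and then bound $J$ by the standing hypothesis $|J|\leq|I|$, whereas the paper runs the contrapositive, splitting into the cases ``$I$ uncountable, $J$ countable'' and ``$J$ uncountable'' and exhibiting a disconnected pair in each; both are sound and essentially the same argument.
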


\begin{proof}
Suppose $I$ is uncountable, but $J$ is countable. Then,
$\lambda(J)$ and $\rho(J)$ are also countable, and
therefore $I\setminus \bigl(\lambda(J)\cup\rho(J)\bigr)$ is nonempty.
Thus for any $k$ belonging to that set, we have
that for  $i = k = j$ neither of
the conditions stated in the second part of Theorem~\ref{si-kites}(2) can hold.

Suppose $J$ is uncountable, and thus so is $I$ because $\lambda$ and $\rho$ are
injections. Take $i\in \lambda(J)$. The set
$\bigcup_{m\in\omega}(\rho\circ\lambda^{-1})^m(i)$ is also countable
and therefore $I\setminus \bigcup_{m\in\omega}(\rho\circ\lambda^{-1})^m(i)$
is uncountable. If $i\notin\rho(J)$, then any pair
$(i, j)$ with $j\in I\setminus
\bigcup_{m\in\omega}(\rho\circ\lambda^{-1})^m(i)$
witnesses a failure of the conditions from the second part of
Theorem~\ref{si-kites}(2). If $i\in\rho(J)$, then,
since
$\bigcup_{m\in\omega}(\lambda\circ\rho^{-1})^m(i)$ is countable as well, we
have a $k\in I\setminus\bigcup_{m\in\omega}(\lambda\circ\rho^{-1})^m(i)$.
Then, the pair $(i, k)$ witnesses a failure of these conditions.
\end{proof}

\begin{lemma}\label{countable-dim-maps}
Let  $K_{I,J}^{\lambda,\rho}(\mathbf{G})$ be a subdirectly
irreducible kite with countably infinite $I$ and $J$.
Then, one of the following three cases must obtain:
\begin{enumerate}
\item $\lambda$ and $\rho$ are bijections.
\item $\lambda$ is a bijection and $|I\setminus\rho(J)| = 1$.
\item $\rho$ is a bijection and $|I\setminus\lambda(J)| = 1$.
\end{enumerate}
\end{lemma}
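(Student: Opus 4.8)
The plan is to recast subdirect irreducibility as a combinatorial statement about a partial bijection on $I$, and then classify the orbit structure of such a map on a countably infinite set. Write $\sigma = \rho\circ\lambda^{-1}$. Since $\lambda$ and $\rho$ are injections, $\sigma$ is a partial bijection on $I$ with domain $\lambda(J)$ and range $\rho(J)$, and its inverse is $\sigma^{-1} = \lambda\circ\rho^{-1}$, with domain $\rho(J)$ and range $\lambda(J)$. Because $K_{I,J}^{\lambda,\rho}(\mathbf{G})$ is subdirectly irreducible, Theorem~\ref{si-kites} applies: for all $i,j\in I$ there is $m\in\omega$ with $\sigma^m(i) = j$ or $\sigma^{-m}(i) = j$. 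Fixing any $i_0\in I$, this says every $j\in I$ is reachable from $i_0$ by iterating $\sigma$ or $\sigma^{-1}$, so $I$ consists of a single $\sigma$-orbit. (Only this combinatorial consequence of Theorem~\ref{si-kites}(1) is used; the subdirect irreducibility of $\mathbf{G}$ plays no role here.)

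Next I would run through the possible shapes of a single orbit of a partial bijection, using $|I| = \aleph_0$ to cut the list down. Call $i\in I$ a \emph{source} if $i\notin\rho(J)$ (no $\sigma$-preimage) and a \emph{sink} if $i\notin\lambda(J)$ ($\sigma$ undefined at $i$). There is at most one source: if $s$ is a source then the elements reachable from $s$ are exactly $\sigma^k(s)$ for $k\geq 0$ (one cannot apply $\sigma^{-1}$ to $s$), so a second source, being reachable from $s$, would equal some $\sigma^k(s)$ with $k\geq 1$ and hence have a $\sigma$-preimage, a contradiction. Symmetrically there is at most one sink. A source $s$ and a sink $t$ cannot coexist: since $t$ is reachable from $s$ we must have $t = \sigma^k(s)$ for some $k\geq 0$, whence $\sigma$ is undefined at $\sigma^k(s)$ and the orbit $\{s,\sigma(s),\dots,\sigma^k(s)\}$ is finite, contradicting $|I| = \aleph_0$. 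Finally, if there is neither a source nor a sink, then starting from any $i_0$ we obtain a two-sided sequence $\langle\sigma^n(i_0)\colon n\in\mathbb Z\rangle$ which cannot close into a cycle (a finite cycle is a finite orbit, again contradicting $|I| = \aleph_0$), so the $\sigma^n(i_0)$ are pairwise distinct and exhaust $I$.

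These three mutually exclusive alternatives are exactly the three cases of the lemma, after translating back through the dictionary ``source $=$ element of $I\setminus\rho(J)$'' and ``sink $=$ element of $I\setminus\lambda(J)$'': no source and no sink means $\rho(J) = I = \lambda(J)$, i.e.\ both injections are bijections (Case~(1)); a unique source and no sink means $|I\setminus\rho(J)| = 1$ and $\lambda(J) = I$, so $\lambda$ is a bijection (Case~(2)); a unique sink and no source means $|I\setminus\lambda(J)| = 1$ and $\rho(J) = I$, so $\rho$ is a bijection (Case~(3)). The only part that needs real care is the orbit classification in the middle paragraph — in particular the two appeals to $|I| = \aleph_0$ that rule out finite cycles and finite chains, and the observation that the connectivity clause of Theorem~\ref{si-kites} genuinely collapses $I$ to one orbit; everything else is bookkeeping about the domains and ranges of $\sigma$ and $\sigma^{-1}$.
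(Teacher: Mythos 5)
Your proof is correct and is essentially the paper's argument in cleaner packaging: your ``sources'' and ``sinks'' are exactly the sets $\lambda(J)\setminus\rho(J)$ and $\rho(J)\setminus\lambda(J)$ (given $\lambda(J)\cup\rho(J)=I$), and the paper likewise shows each has at most one element via the connectivity condition of Theorem~\ref{si-kites} and then excludes their coexistence because that would force $I$ to be a finite chain, contradicting $|I|=\aleph_0$. The orbit-of-a-partial-bijection formulation is a nice streamlining but not a different route.
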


\begin{proof}
From the conditions in Theorem \ref{si-kites}(2) it follows that
$\rho(J)\cup\lambda(J) = I$. Suppose
$i,j\in\lambda(J)\setminus\rho(J)$. Observe that any path of
alternating $\lambda$ and $\rho$ between $i$ and $j$ must begin with
$\lambda^{-1}$. Therefore, it must end with $\rho$, and so $i = j$
and $m = 0$. So, $|\lambda(J)\setminus\rho(J)|\leq 1$. Similarly,
$|\rho(J)\setminus\lambda(J)|\leq 1$. We have three cases to
consider.

\medskip
\noindent
\emph{Case 1.\/} Suppose $|\lambda(J)\setminus\rho(J)| = 1$ and
$|\rho(J)\setminus\lambda(J)| = 1$. Put
$i = \lambda(J)\setminus\rho(J)$ and
$j = \rho(J)\setminus\lambda(J)$. Since $K_{I,J}^{\lambda,\rho}(\mathbf{G})$ is
subdirectly irreducible, there is an $m\in \omega$ such that
$(\rho\circ\lambda^{-1})^m(i) = j$. For $0\leq n\leq m$, put
$k_n = (\rho\circ\lambda^{-1})^n(i)$. Take
$k\in J\setminus \{k_0,\dots,k_m\}$ and $k_n$ with  $0\leq n \leq m$.
It is not difficult to show, by case analysis, that no path of alternating
$\lambda$ and $\rho$ can exist between $k$ and $k_n$, unless $i=j$. But then
either $|\lambda(J)\setminus\rho(J)| \neq 1$ or
$|\rho(J)\setminus\lambda(J)| \neq 1$, contradicting assumptions. This case
is thus excluded.

\medskip
\noindent
\emph{Case 2.\/} Suppose $|\lambda(J)\setminus\rho(J)| = 0$ and
$|\rho(J)\setminus\lambda(J)| = 1$. Then,
$\lambda(J)\subset\rho(J)$ and so $I = \lambda(J)\cup\rho(J) = \rho(J)$.
Thus, $\rho$ is a bijection and $|I\setminus\lambda(J)| = 1$, i.e.,
(3) holds. By symmetry, if $|\rho(J)\setminus\lambda(J)| = 0$ and
$|\lambda(J)\setminus\rho(J)| = 1$, we get that (2) holds.

\medskip
\noindent
\emph{Case 3.\/} Finally, suppose $|\lambda(J)\setminus\rho(J)| = 0 =
|\rho(J)\setminus\lambda(J)|$. Then $\lambda(J) = \rho(J) = I$ and
(1) holds.
\end{proof}

\subsection{Three infinite-dimensional kites}
It is consistent with Lemma~\ref{countable-dim-maps} that no
infinite-dimensional subdirectly irreducible kites exist. The
examples below show that cases (1), (2) and (3) in
Lemma~\ref{countable-dim-maps} are non-void.

Case (1). Take $I = J = \mathbb{Z}$ and put $\lambda(i) = i$,
$\rho(i) = i+1$. The kite
$K_{\mathbb{Z},\mathbb{Z}}^{\lambda,\rho}(\mathbb{Z})$ is
subdirectly irreducible; its smallest nontrivial normal filter is
the set of all sequences $\langle k_i\colon i\in\mathbb{Z}\rangle$
with $k_i \neq 0$ for finitely many $i$.

Case (2). Take $I = J = \omega$ and put $\lambda(i) = i$, $\rho(i) =
i+1$. The kite
$K_{\mathbb{Z},\mathbb{Z}}^{\lambda,\rho}(\mathbb{Z})$ is
subdirectly irreducible; its smallest nontrivial normal filter is
the set of all sequences $\langle k_i\colon i\in\omega\rangle$ with
$k_i \neq 0$ for finitely many $i$.

Case (3). Take $I = J = \omega$ and put $\lambda(i) = i+1$, $\rho(i)
= i$. We obtain an example which is symmetric, but not isomorphic,
to the previous one.

\subsection{Classification}
Gathering all results of this section together, we obtain the classification
of subdirectly irreducible kites promised at the beginning.

\begin{theorem}\label{classif}
Let $K_{I,J}^{\lambda,\rho}(\mathbf{G})$ be a subdirectly
irreducible kite. Then, $K_{I,J}^{\lambda,\rho}(\mathbf{G})$ is isomorphic to
precisely one of:
\begin{enumerate}
\item $K_{n,n}^{\lambda,\rho}(\mathbf{G})$, with $\lambda(j) = j$ and
$\rho(j) = j+1\ (\mathrm{mod}\ n)$.
\item $K_{\mathbb{Z},\mathbb{Z}}^{\lambda,\rho}(\mathbf{G})$, with $\lambda(j) = j$ and
$\rho(j) = j+1$.
\item $K_{\omega,\omega}^{\lambda,\rho}(\mathbf{G})$, with $\lambda(j) = j$ and
$\rho(j) = j+1$.
\item $K_{\omega,\omega}^{\lambda,\rho}(\mathbf{G})$, with $\lambda(j) = j+1$ and
$\rho(j) = j$.
\item $K_{n+1,n}^{\lambda,\rho}(\mathbf{G})$, with $\lambda(j) = j$ and
$\rho(j) = j+1$.
\end{enumerate}
Moreover, types {\rm (1)} and {\rm (2)} consist entirely of pseudo
MV-algebras, the other types contain no pseudo MV-algebras except
the two-element Boolean algebra. A kite of type {\rm (3)} or {\rm
(4)} is good if and only if it is a two-element Boolean algebra. A
kite of type {\rm (5)} is good if and only if $J=\emptyset$.
\end{theorem}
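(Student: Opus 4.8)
The plan is to assemble Theorem~\ref{classif} almost entirely from facts already proved in this section, treating the classification of the underlying combinatorial data and the case-by-case verification of the ``moreover'' clauses as the two separate tasks. First I would establish that every subdirectly irreducible kite is isomorphic to one of the five listed algebras. By Lemma~\ref{si-only-if-si}, $\mathbf{G}$ must be subdirectly irreducible, and by Lemma~\ref{countable-dim} both $I$ and $J$ are at most countably infinite. If they are finite, Lemma~\ref{si-sizes} gives exactly types (1) and (5). If they are infinite, Lemma~\ref{countable-dim-maps} forces one of its three cases, and a short argument (identical in spirit to the proof of Lemma~\ref{si-sizes}, using that $I$ is connected under $\rho\circ\lambda^{-1}$ and $\lambda\circ\rho^{-1}$) shows the maps can be renumbered to $\lambda(j)=j,\rho(j)=j+1$ on $\mathbb{Z}$, or $\lambda(j)=j,\rho(j)=j+1$ on $\omega$, or $\lambda(j)=j+1,\rho(j)=j$ on $\omega$ — exactly types (2), (3), (4). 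The three examples in Section~5.1 show all five types are realized. That the types are pairwise non-isomorphic follows from cardinality considerations on $I,J$ together with the distinction ``$\rho$ a bijection on $\omega$ vs.\ not'' separating (3) from (4): in type (3) every $i\in I$ lies in $\rho(J)$ after suitable renumbering only in the sense that $|I\setminus\rho(J)|=0$ fails, while in (4) it is $|I\setminus\lambda(J)|$ that vanishes; more concretely one checks $0\in I$ is in the range of $\rho$ in (4) but not in (3), an isomorphism-invariant property.

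Next I would verify the ``moreover'' sentences using Lemma~\ref{good-kites} and Lemma~\ref{hyper-psMV}. For the pseudo MV-algebra claim: in types (1) and (2) we have $\lambda(J)=I=\rho(J)$ (the maps are bijections), so Lemma~\ref{good-kites}(2) applies and these are pseudo MV-algebras. In types (3), (4), (5) we have $|I\setminus\lambda(J)|=1$ or $|I\setminus\rho(J)|=1$, so $\lambda(J)\neq I$ or $\rho(J)\neq I$, and Lemma~\ref{good-kites}(2) says the kite is not a pseudo MV-algebra — unless it degenerates to the two-element Boolean algebra, which happens precisely in the trivial sub-case ($n=1$ with $\mathbf{G}$ trivial, or $J=\emptyset$); one notes the two-element Boolean algebra is vacuously a pseudo MV-algebra and is the only such degenerate instance. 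For goodness: by Lemma~\ref{good-kites}(1) a kite is good iff $\lambda(J)=\rho(J)$. In type (3), $\lambda(\omega)=\omega$ but $\rho(\omega)=\omega\setminus\{0\}$ (or vice versa in type (4)), so $\lambda(J)\neq\rho(J)$ whenever $J\neq\emptyset$ — but for types (3),(4) the index sets are infinite, so $J=\emptyset$ is impossible; hence a good kite of type (3) or (4) must have degenerate $\mathbf{G}$, giving the two-element Boolean algebra. In type (5), $\lambda(J)=J$ and $\rho(J)=\{1,\dots,n\}=I\setminus\{0\}$ as subsets of $I=\{0,\dots,n\}$; these coincide iff $n=0$, i.e.\ $J=\emptyset$.

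I would write this up as one consolidated proof: first the isomorphism-type classification citing Lemmas~\ref{si-only-if-si}, \ref{countable-dim}, \ref{si-sizes}, \ref{countable-dim-maps} and the examples; then three short paragraphs for the pseudo MV, the type-(3)/(4) goodness, and the type-(5) goodness assertions, each a one-line application of Lemma~\ref{good-kites}. The main obstacle I anticipate is not any single hard step but rather making the infinite-dimensional renumbering argument (reducing Lemma~\ref{countable-dim-maps}'s three cases to the three explicit normal forms, and proving the resulting kites are genuinely non-isomorphic) fully rigorous without simply reproducing the bookkeeping already done for the finite case; I would handle it by remarking that the argument of Lemma~\ref{si-sizes} goes through verbatim once one knows which of $\lambda,\rho$ is surjective, so the only genuinely new content is checking that types (2), (3), (4) are pairwise distinct, which is immediate from $|I|$ and from whether $0\in\rho(J)$.
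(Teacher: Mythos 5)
Your proposal follows essentially the same route as the paper: the finite-dimensional cases are read off from Lemma~\ref{si-sizes}, the infinite-dimensional ones from Lemma~\ref{countable-dim-maps} by a renumbering argument mimicking Lemma~\ref{si-sizes}, and the ``moreover'' clauses from Lemma~\ref{good-kites}; in fact you supply rather more detail (the goodness computations for types (3)--(5) and the non-isomorphism of the five types) than the paper's own three-line proof does. The only looseness is in phrasing -- e.g.\ ``$0\in\rho(J)$'' should be converted, via Lemma~\ref{rotations}, into a genuine algebraic invariant (behaviour of $\ln\rn$ versus $\rn\ln$ on one-dimensional elements) to separate types (3) and (4) -- but this is cosmetic and does not affect the argument.
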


\begin{proof}
The cases with $I$ and $J$ finite follow from Lemma~\ref{si-sizes}. The
infinite-dimensional cases can be derived from Lemma~\ref{countable-dim-maps}
using arguments mimicking those of Lemma~\ref{si-sizes}.
The `moreover' statements follow from Lemma~\ref{good-kites}.
\end{proof}

For kites of the types from Theorem~\ref{classif} above
it will be convenient from now on to use the following notational convention:
\begin{enumerate}
\item $K_{n,n}^{0,1}(\mathbf{G})$
\item $K_{\mathbb{Z},\mathbb{Z}}^{0,1}(\mathbf{G})$
\item $K_{\omega,\omega}^{0,1}(\mathbf{G})$
\item $K_{\omega,\omega}^{1,0}(\mathbf{G})$
\item $K_{n+1,n}^{0,1}(\mathbf{G})$
\end{enumerate}
where the upper indices correspond to the functions $\lambda$ and
$\rho$ in such a way that whenever $\lambda$ or $\rho$ is the
identity function, we replace it by $0$; otherwise we replace it by
$1$ (which sits well with the fact that it is then a kind of
successor function).

\begin{cor}
A subdirectly irreducible kite is good if and only if it is either a
pseudo MV-algebra or it is of the form
$K_{1,0}^{\emptyset,\emptyset}(\mathbf{G})$, for a subdirectly
irreducible $\ell$-group $\mathbf{G}$.
\end{cor}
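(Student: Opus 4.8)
The plan is to read this off directly from the classification theorem (Theorem~\ref{classif}) together with its `moreover' clause; the argument is a short case analysis, with no real computation.

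For the backward implication, I would argue as follows. Let $K$ be a subdirectly irreducible kite. If $K$ is a pseudo MV-algebra, then $K$ is good, since every pseudo MV-algebra is good. If instead $K = K_{1,0}^{\emptyset,\emptyset}(\mathbf{G})$ for a subdirectly irreducible $\mathbf{G}$, then $K$ is the type~(5) kite $K_{n+1,n}^{0,1}(\mathbf{G})$ with $n = 0$, so $J = \emptyset$; hence $K$ is good by the final `moreover' assertion of Theorem~\ref{classif}. (Such a $K$ is indeed subdirectly irreducible: for $|I| = 1$ the connectedness requirement of Theorem~\ref{si-kites}(1) is vacuous, and $\mathbf{G}$ is subdirectly irreducible by assumption.)

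For the forward implication, let $K = K_{I,J}^{\lambda,\rho}(\mathbf{G})$ be a good subdirectly irreducible kite. By Lemma~\ref{si-only-if-si}, $\mathbf{G}$ is subdirectly irreducible, and by Theorem~\ref{classif}, $K$ is isomorphic to one of the five listed types. If $K$ is of type~(1) or~(2), then $K$ is a pseudo MV-algebra by the `moreover' part, and we are done. If $K$ is of type~(3) or~(4), then, being good, $K$ must be the two-element Boolean algebra, again by the `moreover' part; but the two-element Boolean algebra is an MV-algebra, hence a pseudo MV-algebra, so this case too falls under the first alternative. Finally, if $K$ is of type~(5), i.e. $K \cong K_{n+1,n}^{0,1}(\mathbf{G})$, then goodness forces $J = \emptyset$, i.e. $n = 0$, so $K \cong K_{1,0}^{\emptyset,\emptyset}(\mathbf{G})$ with $\mathbf{G}$ subdirectly irreducible, which is the second alternative. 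This exhausts the cases.

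There is no deep obstacle here; the nearest thing to one is the bookkeeping around type~(5) at $n = 0$: one has to observe that $K_{n+1,n}^{0,1}(\mathbf{G})$ with $n=0$ is exactly $K_{1,0}^{\emptyset,\emptyset}(\mathbf{G})$ (with $\lambda$ and $\rho$ necessarily the empty map), and to recall that the two-element Boolean algebra is a pseudo MV-algebra, so that the type-(3)/(4) subcase is correctly absorbed into the first disjunct rather than appearing as a spurious third alternative.
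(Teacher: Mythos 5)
Your proof is correct and follows essentially the same route the paper intends: the corollary is stated as an immediate consequence of Theorem~\ref{classif}, and your case analysis simply reads off the `moreover' clause (types (1)--(2) are pseudo MV-algebras, types (3)--(4) are good only when they are the two-element Boolean algebra, type (5) is good only when $J=\emptyset$), together with Lemma~\ref{si-only-if-si} for the subdirect irreducibility of $\mathbf{G}$. The bookkeeping you flag --- identifying type (5) at $n=0$ with $K_{1,0}^{\emptyset,\emptyset}(\mathbf{G})$ and absorbing the Boolean case into the pseudo MV disjunct --- is exactly what the paper's phrasing presupposes.
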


The last result in this section shows that subdirectly irreducible kites are
building blocks from which all kites can be built. Namely, we will show that
each kite is a subdirect product of subdirectly irreducible ones. This is not
a trivial corollary of Birkhoff's subdirect representation theorem, the value
added is that the subdirect factors are kites as well.

\begin{theorem}\label{si-kites-are-all}
Let $K_{I,J}^{\lambda,\rho}(\mathbf{G})$ be a kite. Then,
$K_{I,J}^{\lambda,\rho}(\mathbf{G})$ is subdirectly embeddable into
a product of kites of the form
$K_{I',J'}^{\lambda,\rho}(\mathbf{G})$, where $I'\cup J'$ is a
connected component of the graph of $\lambda\circ\rho^{-1}\cup
\rho\circ\lambda^{-1}$ paths in $I\cup J$.
\end{theorem}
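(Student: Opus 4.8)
The plan is to decompose the index structure of an arbitrary kite into connected components with respect to the relation generated by $\lambda\circ\rho^{-1}$ and $\rho\circ\lambda^{-1}$, and to show that the kite over each component, together with the natural projections, gives the desired subdirect embedding. First I would define, for each connected component $C$ of the graph on $I\cup J$, the sets $I' = C\cap I$ and $J' = C\cap J$, and check that $\lambda$ and $\rho$ restrict to injections $J'\to I'$ — this uses the fact that if $j\in J'$ then $\lambda(j)$ and $\rho(j)$ lie in the same component as $j$ (being adjacent in the graph), so $\lambda(J')\subseteq I'$ and $\rho(J')\subseteq I'$. Thus $K_{I',J'}^{\lambda,\rho}(\mathbf{G})$ is a well-defined kite for each component $C$.

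Next I would produce the maps. For each component $C$ define $\pi_C\colon K_{I,J}^{\lambda,\rho}(\mathbf{G})\to K_{I',J'}^{\lambda,\rho}(\mathbf{G})$ as the obvious coordinate projection: an element of $(G^-)^I$ goes to its restriction to $I'$, an element of $(G^+)^J$ goes to its restriction to $J'$ (note $0$ and $1$ are preserved since they are constant sequences, and the bottom/top structure is respected because the two blocks project to the two blocks). I would then verify that $\pi_C$ is a homomorphism: lattice operations and the $(G^-)^I$-internal multiplication and divisions are coordinatewise, hence trivially preserved; the mixed products and divisions (the cases in the definition of multiplication and in Definition~\ref{resid}) are preserved because all their defining formulas refer only to coordinates $j$, $\lambda(j)$, $\rho(j)$, $\lambda^{-1}(i)$, $\rho^{-1}(i)$, $\rho\circ\lambda^{-1}(i)$ and $\lambda\circ\rho^{-1}(i)$, all of which stay inside a single component. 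The key point is that ``$\rho^{-1}(i)$ is defined'' for $i\in I'$ is the same statement whether computed in the big kite or the small one, precisely because $\rho(J)\cap I' = \rho(J')$ by connectedness; similarly for $\lambda$. Finally, the combined map $x\mapsto (\pi_C(x))_C$ into $\prod_C K_{I',J'}^{\lambda,\rho}(\mathbf{G})$ is injective, since the components partition $I\cup J$ and an element is determined by all its coordinates; hence it is a subdirect embedding.

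The main obstacle — and the only place requiring genuine care rather than bookkeeping — is verifying that $\pi_C$ preserves the two ``partial-function'' division clauses $\langle f_j\rangle\ld\langle g_j\rangle$ and $\langle g_j\rangle\rd\langle f_j\rangle$ from Definition~\ref{resid}, where the output at coordinate $i\in I'$ is defined by cases on whether $\rho^{-1}(i)$ (resp. $\lambda^{-1}(i)$) is defined. One must check that this case split is insensitive to passing from $I,J$ to $I',J'$: if $i\in I'$ and $\rho^{-1}(i)$ is defined in the big kite, then $\rho^{-1}(i)\in J$ is adjacent to $i$ in the graph, hence lies in $C$, hence in $J'$, so $\rho^{-1}(i)$ is also defined in the small kite and yields the same value; conversely if it is undefined in the big kite it is a fortiori undefined in the small one, and both kites output $e^{-1}$ at $i$. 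Once this compatibility of the domains of the partial inverses with the component decomposition is nailed down, every remaining verification is a routine matching of the formulas in the definition of multiplication and of Definition~\ref{resid} coordinate by coordinate.
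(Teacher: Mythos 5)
Your proposal is correct and follows essentially the same route as the paper: decompose $I\cup J$ into connected components and map the kite subdirectly onto the component kites, the only real content being that the domains of $\lambda^{-1},\rho^{-1}$ (and the indices $\lambda(j),\rho(j)$) never leave a component. The paper packages this via the normal filters $N_C$ of elements trivial outside $C$ and their trivially intersecting congruences, whose quotients are the component kites; your coordinate projections $\pi_C$ are exactly the corresponding quotient maps, so the two arguments coincide up to presentation.
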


\begin{proof}
It is not difficult to show, using Lemmas~\ref{hyper-filters}
and~\ref{restrict}, that if the $\ell$-group $\mathbf{G}$ is not
subdirectly irreducible, then any subdirect representation
$\mathbf{G}\leq \prod_{s\in  S}\mathbf{G}_s$ naturally gives rise to
a subdirect representation $K_{I,J}^{\lambda,\rho}(\mathbf{G})\leq
\prod_{s\in S}K_{I,J}^{\lambda,\rho}(\mathbf{G}_s)$. We leave the
details to the reader.

Now, assume $\mathbf{G}$ is subdirectly irreducible and let $P$
stand for the graph of $\lambda\circ\rho^{-1}\cup
\rho\circ\lambda^{-1}$ paths in $I\cup J$.  Let further
$\mathcal{C}$ be the set of all connected components of $P$. From
Theorem~\ref{si-kites} it follows that for each connected component
$C = I_C\cup J_C$ of $P$ the kite
$K_{I_C,J_C}^{\lambda,\rho}(\mathbf{G})$ is subdirectly irreducible.
For each $C\in\mathcal{C}$ let $N_C$ be the set of all $a = \langle
a^{-1}_i\in I\rangle\in (G^-)^I$ such that $i\notin C$ implies $a_i
= e$. It is straightforward to see that $N_C$ is a normal filter for
each $C\in\mathcal{C}$, and that
$K_{I,J}^{\lambda,\rho}(\mathbf{G})/\Theta_C$, where $\Theta_C$ is
the congruence corresponding to $N_C$, is isomorphic to
$K_{I_C,J_C}^{\lambda,\rho}(\mathbf{G})$. Since
$\bigcap_{C\in\mathcal{C}} N_C = \{1\}$, this proves claim.
\end{proof}

\begin{cor}\label{si-kites-generate}
Let\/ $\mathsf{K}$ be the variety generated by all kites.
Then, $\mathsf{K}$ is generated by all subdirectly irreducible kites.
\end{cor}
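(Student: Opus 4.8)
The plan is straightforward since the heavy lifting has already been done. First I would recall Birkhoff's subdirect representation theorem: every algebra in a variety is a subdirect product of subdirectly irreducible members of that variety, and hence the variety is generated by its subdirectly irreducible members. Thus $\mathsf{K}$, being generated by all kites, equals the variety generated by all subdirectly irreducible algebras lying in $\mathsf{K}$. The point of the corollary is to upgrade this to: $\mathsf{K}$ is generated by the subdirectly irreducible \emph{kites}, not merely the subdirectly irreducible algebras in $\mathsf{K}$.

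The key step is to observe that Theorem~\ref{si-kites-are-all} gives exactly what is needed. Every kite $K_{I,J}^{\lambda,\rho}(\mathbf{G})$ embeds subdirectly into a product of kites of the form $K_{I',J'}^{\lambda,\rho}(\mathbf{G})$ where $I'\cup J'$ is a connected component of the relevant path graph; and by Theorem~\ref{si-kites} each such factor, being built on a single connected component, is subdirectly irreducible provided $\mathbf{G}$ is. When $\mathbf{G}$ is not subdirectly irreducible, the first paragraph of the proof of Theorem~\ref{si-kites-are-all} lets us first replace $\mathbf{G}$ by a subdirect product of subdirectly irreducible $\ell$-groups and pull this through the kite construction, so in all cases a kite is a subdirect product of subdirectly irreducible kites. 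Hence every kite lies in the variety generated by all subdirectly irreducible kites, so $\mathsf{K}$, being generated by all kites, is contained in (and trivially contains) the variety generated by all subdirectly irreducible kites.

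I do not anticipate a genuine obstacle here; the corollary is essentially a repackaging of Theorem~\ref{si-kites-are-all}. The only mild subtlety worth stating explicitly is that one should close the loop: $\mathsf{K}$ contains all subdirectly irreducible kites (they are kites), so it contains the variety they generate; conversely Theorem~\ref{si-kites-are-all} shows every kite, hence every generator of $\mathsf{K}$, lies in that variety. Combining the two inclusions gives equality. I would present the argument in two or three sentences, citing Theorem~\ref{si-kites-are-all} for the nontrivial direction.

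\begin{proof}
By definition $\mathsf{K}$ contains every kite, in particular every subdirectly irreducible kite, and hence contains the variety $\mathsf{K}'$ generated by all subdirectly irreducible kites. For the reverse inclusion, it suffices to show that every kite belongs to $\mathsf{K}'$. This is immediate from Theorem~\ref{si-kites-are-all}: an arbitrary kite $K_{I,J}^{\lambda,\rho}(\mathbf{G})$ is subdirectly embeddable into a product of kites $K_{I_C,J_C}^{\lambda,\rho}(\mathbf{G})$ indexed by the connected components $C = I_C\cup J_C$ of the path graph of $\lambda\circ\rho^{-1}\cup\rho\circ\lambda^{-1}$, and, when $\mathbf{G}$ itself fails to be subdirectly irreducible, the first part of the proof of that theorem lets us further decompose $\mathbf{G}$ subdirectly into subdirectly irreducible $\ell$-groups and carry this through the kite construction. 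In either case $K_{I,J}^{\lambda,\rho}(\mathbf{G})$ is a subdirect product of subdirectly irreducible kites, so it lies in $\mathsf{K}'$. Therefore $\mathsf{K}\subseteq\mathsf{K}'$, and combined with the inclusion above we get $\mathsf{K} = \mathsf{K}'$.
\end{proof}
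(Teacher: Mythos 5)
Your argument is correct and is essentially the paper's own: the corollary is stated there as an immediate consequence of Theorem~\ref{si-kites-are-all}, with exactly the two inclusions you spell out, and your remark that the non-subdirectly-irreducible-$\mathbf{G}$ case is absorbed by the first paragraph of that theorem's proof matches the paper's intent.
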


\section{Finite-dimensional kites}

A kite $K_{I,J}^{\lambda,\rho}(\mathbf{G})$ will be called
\emph{$n$-dimensional} if $|I| = n\in\omega$, and
\emph{finite-dimensional} if it is $n$-dimensional for some $n$. We
write $\mathcal{K}_n$ for the class of all $n$-dimensional kites and
$\mathsf{K_n}$ for the variety generated by $\mathcal{K}_n$. In this
section we will show that $\mathsf{K}$ is generated by
finite-dimensional kites, and thus $\mathsf{K}$ is the varietal join
of $\mathsf{K_n}$ for $n\in\omega$. Our proof proceeds by embedding
infinitely dimensional kites from Theorem~\ref{classif} into a
quotient of a subalgebra of a product of kites of finitely
dimensional kites.

For any $n\in \omega$, take the kite
$K_{2n+1,2n+1}^{0,1}(\mathbf{G})$. We think of the set $2n+1$ as the
universe of the additive group $\mathbb{Z}/(2n+1)\mathbb{Z}$ and
label its elements accordingly so that $2n+1 =
\{-n,-n+1,\dots,-1,0,1,\dots,n-1,n\}$. Consider the direct product
$\prod_{n\in\omega}K_{2n+1,2n+1}^{0,1}(\mathbf{G})$. As usual, for
$u\in\prod_{n\in\omega}K_{2n+1,2n+1}^{0,1}(\mathbf{G})$, we will
write $u(i)$ for the $i$-th element of $u$. Then, employing our
numbering convention, we will write $u(i)$ as $\langle
u_{-i},\dots,u_0,\dots,u_i\rangle$, and so we have
$$
u = \bigl\langle\langle u_{-n},\dots,u_0,\dots,u_n\rangle\colon n\in\omega\bigr\rangle.
$$
It is easy to see that the set $S_\mathbf{G} = U_\mathbf{G}\cup L_\mathbf{G}$,
where
\begin{align*}
U_\mathbf{G} &=  \bigl\{\langle a^{-1}_{-n},\dots,a^{-1}_i,\dots,a^{-1}_{n}\rangle,
\colon a^{-1}_i\in G^-,\ n\in\omega\bigr\}\\
L_\mathbf{G} &=  \bigl\{\langle f_{-n},\dots,f_i,\dots,f_{n}\rangle
\colon f_i\in G^+,\ n\in\omega\bigr\}
\end{align*}
is a subuniverse of $\prod_{n\in\omega}K_{2n+1,2n+1}^{0,1}(\mathbf{G})$.
Let $\mathbf{S}_\mathbf{G}$ be the corresponding subalgebra of
$\prod_{n\in\omega}K_{2n+1,2n+1}^{0,1}(\mathbf{G})$.
Now, on $\mathbf{S}_\mathbf{G}$ we define a binary
relation, putting
$$
u\sim w \quad\text{if{}f}\quad
\exists k\in\omega\ \forall n\geq k\colon [\![u(n)\neq w(n)]\!]\cap [-n+k,n-k] = \emptyset
$$
where $[\![u(n)\neq w(n)]\!] = \{-n\leq i\leq n\colon u_i\neq w_i\}$, as usual.
Intuitively, $u\sim w$ holds if, for sufficiently large $n\in\omega$,
the sequences $u(n)$ and $w(n)$ differ only at a bounded number of initial and
final places.  It should be clear that $u\sim w$ can hold only if either
$u,w\in U_\mathbf{G}$ or $u,w\in L_\mathbf{G}$.

\begin{lemma}\label{sim-is-cong}
The relation $\sim$ is a congruence on
$\prod_{n\in\omega}K_{2n+1,2n+1}^{0,1}(\mathbf{G})$.
\end{lemma}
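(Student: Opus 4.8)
The plan is to verify directly that $\sim$ is an equivalence relation and that it is compatible with the operations $\vee,\wedge,\cdot,\ld,\rd$ (the constants $0,1$ are handled trivially). Reflexivity and symmetry are immediate from the definition; transitivity follows because if $u\sim w$ with bound $k_1$ and $w\sim v$ with bound $k_2$, then taking $k=k_1+k_2$ the symmetric difference $[\![u(n)\neq v(n)]\!]$ is contained in $[\![u(n)\neq w(n)]\!]\cup[\![w(n)\neq v(n)]\!]$, and for $n\ge k$ both of the latter avoid the interval $[-n+k,n-k]$, hence so does the union. For compatibility, I would argue coordinatewise in $n$: for each operation $\star$ the value $(u\star u')(n)$ depends only on $u(n)$ and $u'(n)$, so it suffices to control, for each $n$, where $(u\star u')(n)$ and $(w\star w')(n)$ can differ given control on where $u(n)$ differs from $w(n)$ and $u'(n)$ from $w'(n)$.

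The key observation making this work is a \emph{locality with bounded shift} property of the kite operations on a single factor $K_{2n+1,2n+1}^{0,1}(\mathbf{G})$. Inspecting the definitions of multiplication and of divisions in Definition~\ref{resid} with $\lambda(j)=j$ and $\rho(j)=j+1\ (\mathrm{mod}\ 2n+1)$: multiplication in $(G^-)^{2n+1}$ is coordinatewise; the mixed products $\langle a_i^{-1}\rangle\cdot\langle f_j\rangle$ and $\langle f_j\rangle\cdot\langle a_i^{-1}\rangle$ read coordinate $j$ of one factor against coordinate $j$ (resp. $j+1$) of the other; the product of two elements of $(G^+)^J$ is the constant $0$; and similarly each clause for $\ld$ and $\rd$ either acts coordinatewise or composes coordinate $j$ with coordinate $j$ or $j\pm 1$. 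Consequently, if $u(n)$ and $w(n)$ agree outside $[-n+k,n-k]$ and likewise for $u'(n),w'(n)$, then $(u\star u')(n)$ and $(w\star w')(n)$ can differ only inside $[-n+k-1,n-k+1]\supseteq$ — more precisely, only at coordinates within distance $1$ of a place where some input differs, so outside $[-n+(k+1),n-(k+1)]$ they still agree. Hence $u\star u'\sim w\star w'$ with the slightly enlarged bound $k+1$, which is still a finite bound, so the relation is preserved. (The one subtlety in the modular case is that the wrap-around identifies $n$ with $-n-1\equiv n\pmod{2n+1}$, i.e. the two ``ends'' of the interval are adjacent; but a differing coordinate near one end only propagates to coordinates near that same end or, via wrap-around, near the other end — in both cases still within the excluded boundary band of width $k+1$, so the argument is unaffected.)

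I would organize the write-up as: (i) equivalence relation (a few lines); (ii) a single lemma-like paragraph stating the bounded-shift locality of each clause of $\cdot,\ld,\rd$, checked by reading off Definition~\ref{resid}; (iii) conclude compatibility by the $k\mapsto k+1$ bookkeeping above, done uniformly for all operations since the join and meet are coordinatewise (hence need no shift at all) and the residuated-monoid operations all shift by at most one coordinate. The main obstacle — really the only place demanding care rather than routine — is (ii): one must check \emph{every} case of the piecewise definitions of $\ld$ and $\rd$ (there are several, including the ones that collapse to the constants $0$ or $1$, which are harmless, and the genuinely coordinate-composing ones) and confirm none of them moves information by more than one coordinate and none of them moves a ``bounded-near-the-ends'' perturbation into the deep interior. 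Everything after that is the elementary observation that a finite union of finite-width boundary bands is again a finite-width boundary band, so that the witnessing $k$ can always be chosen finite.
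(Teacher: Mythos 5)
Your proof is correct and follows essentially the same route as the paper's: check the equivalence-relation properties directly (your bound $k_1+k_2$ versus the paper's $\max\{k_1,k_2\}$ is immaterial), then verify compatibility by observing that every clause of the kite operations with $\lambda=\mathrm{id}$, $\rho(j)=j+1\ (\mathrm{mod}\ 2n+1)$ reads inputs at most one coordinate away, so a disagreement confined to boundary bands of width $k$ propagates only to bands of width $k+1$. The paper carries this out by explicit computation of two representative multiplication cases (including the wrap-around term $f_na^{-1}_{-n}\vee e$) and declares the rest analogous, which is exactly your bounded-shift locality observation in concrete form.
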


\begin{proof}
Reflexivity and symmetry are obvious. For transitivity, suppose
$u\sim v$ and $v\sim w$. By definition then, there are
$k_1,k_2\in\omega$ such that
$$
\forall n\geq k_1\colon
[\![u(n)\neq v(n)]\!]\cap [-n+k_1,n-k_1] = \emptyset
$$
and
$$
\forall n\geq k_2\colon
[\![v(n)\neq w(n)]\!]\cap [-n+k_2,n-k_2] = \emptyset.
$$
Then, putting $k = \mathrm{max}\{k_1,k_2\}$ we obtain
$$
\forall n\geq k\colon
[\![u(n)\neq w(n)]\!]\cap [-n+k,n-k] = \emptyset
$$
and so $u\sim w$ as required. It remains to show that $\sim$ preserves the
operations. We will only prove it for two cases of multiplication.
Let $u\sim w$ and $v\sim s$, with $u,w\in L_\mathbf{G}$ and
$v,s\in U_\mathbf{G}$. Thus,
\begin{align*}
u(n) &= \langle f_{-n},\dots,f_i,\dots,f_{n}\rangle\\
w(n) &= \langle g_{-n},\dots,g_i,\dots,g_{n}\rangle\\
v(n) &= \langle a^{-1}_{-n},\dots,a^{-1}_i,\dots,a^{-1}_{n}\rangle\\
s(n) &= \langle b^{-1}_{-n},\dots,b^{-1}_i,\dots,b^{-1}_{n}\rangle
\end{align*}
with $a_i^{-1},b_i^{-1}\in G^-$ and $f_i,g_i\in G^+$,
for any $n\in\omega$. Further, by definition of $\sim$,
there are $k_1,k_2\in\omega$ such that
\begin{align*}
\forall n\geq k_1\colon
[\![u(n)\neq w(n)]\!]\cap [-n+k_1,n-k_1] &= \emptyset\\
\forall n\geq k_2\colon
[\![v(n)\neq s(n)]\!]\cap [-n+k_2,n-k_2] &= \emptyset.
\end{align*}
Consider $uv$ and $ws$.
\begin{align*}
uv &=
\langle u(n)\colon n\in\omega\rangle\cdot\langle v(n)\colon n\in\omega\rangle\\
 &= \langle u(n)\cdot v(n)\colon n\in\omega\rangle\\
 &= \bigl\langle\langle f_{-n},\dots,f_n\rangle\cdot
\langle a^{-1}_{-n},\dots,a^{-1}_n\rangle\colon n\in\omega\bigr\rangle\\
 &= \bigl\langle\langle f_{-n}a^{-1}_{-n+1}\vee e,\dots,
f_{n-1}a^{-1}_n\vee e,f_na^{-1}_{{n+1}(\mathrm{mod}\ 2n+1)}\vee e\rangle\colon n\in\omega\bigr\rangle\\
 &= \bigl\langle\langle f_{-n}a^{-1}_{-n+1}\vee e,\dots,
f_{n-1}a^{-1}_n\vee e,f_na^{-1}_{-n}\vee e\rangle\colon n\in\omega\bigr\rangle
\end{align*}
and similarly,
\begin{align*}
ws &= \langle w(n)\cdot s(n)\colon n\in\omega\rangle\\
 &= \bigl\langle\langle g_{-n}b^{-1}_{-n+1}\vee e,\dots,
g_{n-1}b^{-1}_n\vee e,g_nb^{-1}_{-n}\vee e\rangle\colon n\in\omega\bigr\rangle.
\end{align*}
Now, for a given $n\geq k = \mathrm{max}\{k_1,k_2\}$, let us compare
\begin{align*}
u(n)\cdot v(n) &=
\langle f_{-n}a^{-1}_{-n+1}\vee e,\dots,f_na^{-1}_{-n}\vee e\rangle\\
w(n)\cdot s(n) &=
\langle g_{-n}b^{-1}_{-n+1}\vee e,\dots,g_nb^{-1}_{-n}\vee e\rangle.
\end{align*}
From the definition of multiplication on
$K_{2n+1,2n+1}^{0,1}(\mathbf{G})$ it follows immediately that the
two can differ only at initial places up to
$f_{-n+k}a^{-1}_{-n+k+1}$, $g_{-n+k}b^{-1}_{-n+k+1}$, respectively,
and at final places from $f_{n-k-1}a^{-1}_{-n-k}$,
$g_{n-k-1}b^{-1}_{-n-k}$, respectively. It follows that
$$
\forall n\geq k+1\colon
[\![u(n)\cdot v(n)\neq w(n)\cdot s(n)]\!]\cap [-n+k+1,n-k-1] =\emptyset
$$
and thus $uv\sim ws$ as required. Other cases of multiplication are similar, and
analogous arguments prove that $\sim$ preserves all the operations, completing
the proof.
\end{proof}

We are now ready to show that $K_{\mathbb{Z},\mathbb{Z}}^{0,1}(\mathbf{G})$
embeds into $\mathbf{S}_\mathbf{G}/\kern-3pt\sim$. To this end, first
define a map
$\mu^-\colon K_{\mathbb{Z},\mathbb{Z}}^{0,1}(\mathbf{G})
\to \mathbf{S}_\mathbf{G}$ putting
$$
\langle u_i\colon i\in\mathbb{Z}\rangle \mapsto
\bigl\langle\langle u_{-n},\dots, u_0,\dots, u_{n}\rangle, n\in\omega\bigr\rangle
$$
where $\langle u_i\colon i\in\mathbb{Z}\rangle$ is either
$\langle a_i^{-1}\colon i\in\mathbb{Z}\rangle$ with $a_i\in G^{-}$ for each $i$,
or $\langle f_i\colon i\in\mathbb{Z}\rangle$ with $f_i\in G^{+}$ for each $i$.
The map $\mu^-$ extends naturally to a map
$\mu\colon K_{\mathbb{Z},\mathbb{Z}}^{0,1}(\mathbf{G})
\to \mathbf{S}_{\mathbf{G}}/\kern-3pt\sim$, namely $\mu = \mu^-/\kern-3pt\sim$.

\begin{lemma}\label{embed-Z}
The map $\mu$ above is an embedding of
$K_{\mathbb{Z},\mathbb{Z}}^{0,1}(\mathbf{G})$ into
$\mathbf{S}_{\mathbf{G}}/\kern-3pt\sim$.
\end{lemma}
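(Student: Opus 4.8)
The plan is to verify that $\mu$ is a well-defined, injective homomorphism. Well-definedness of $\mu^-$ is immediate: a sequence $\langle u_i\colon i\in\mathbb{Z}\rangle$ from $(G^-)^{\mathbb Z}$ (or $(G^+)^{\mathbb Z}$) is sent to the coherent family of its central windows, which plainly lies in $U_\mathbf{G}$ (resp. $L_\mathbf{G}$), hence in $\mathbf{S}_\mathbf{G}$; composing with the quotient map gives $\mu$. Injectivity is the first genuine point: if $\langle u_i\rangle \ne \langle w_i\rangle$, then $u_i\ne w_i$ for some fixed $i\in\mathbb Z$, and for every $n\ge |i|+k$ that coordinate $i$ lies in the window $[-n+k,n-k]$, so $\mu^-(u)(n)$ and $\mu^-(w)(n)$ differ at a position that the relation $\sim$ is not allowed to ignore; hence $\mu^-(u)\not\sim\mu^-(w)$ and $\mu(u)\ne\mu(w)$.

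Next I would check that $\mu^-$ preserves the order and, more importantly, the operations modulo $\sim$. Order and lattice operations are easy: on each window the operations of $K_{2n+1,2n+1}^{0,1}(\mathbf{G})$ restricted to $U_\mathbf{G}$ and $L_\mathbf{G}$ are coordinatewise, and joining/meeting the central windows of two $\mathbb Z$-sequences gives exactly the central windows of the join/meet, so $\mu^-$ is already a lattice homomorphism (no need to pass to the quotient for this part). Likewise the top and bottom are preserved on the nose. The multiplication and the two divisions are where $\sim$ is actually needed, because the cyclic structure of $\rho(j)=j+1\ (\mathrm{mod}\ 2n+1)$ makes the last coordinate of a product $u(n)\cdot v(n)$ ``wrap around'' and use $a^{-1}_{-n}$ instead of the value $a^{-1}_{n+1}$ that the infinite product in $K_{\mathbb{Z},\mathbb{Z}}^{0,1}(\mathbf{G})$ would use. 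So I would compute, for the product of two elements of $K_{\mathbb Z,\mathbb Z}^{0,1}(\mathbf G)$, both $\mu^-(xy)$ and $\mu^-(x)\cdot\mu^-(y)$, and observe that at window $n$ they agree on the entire inner block $[-n+1,\,n-1]$ — the only discrepancies being at the two extreme coordinates $\pm n$ caused by the wrap-around and by the missing neighbour — which is precisely the bounded initial/final mismatch that $\sim$ forgives; hence $\mu^-(xy)\sim\mu^-(x)\cdot\mu^-(y)$. The mixed cases (an element of $(G^+)^{\mathbb Z}$ times one of $(G^-)^{\mathbb Z}$, in either order) and the four division clauses of Definition~\ref{resid} are handled by the same bookkeeping; for the division $\langle f_j\rangle\ld\langle g_j\rangle$ one uses $\rho^{-1}$, which on the finite cyclic index set is the predecessor map, so again the two computations agree except on a bounded number of boundary coordinates. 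I would record these as a short case analysis, deferring the routine coordinate chases to the reader, exactly in the style of Lemma~\ref{sim-is-cong}.

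The main obstacle is the bookkeeping in the multiplicative cases: one must be careful that the index shifts induced by $\lambda$ and $\rho$ on the finite cyclic set $2n+1$ versus on $\mathbb Z$ produce mismatches only near the two ends of the window, and that iterating an operation does not let a discrepancy ``creep inward'' faster than linearly in the number of steps — but here we only apply a single operation, so a fixed loss of one coordinate at each end (absorbed by replacing $k$ with $k+1$) suffices, just as in the congruence proof. Once the homomorphism property is in place, injectivity from the previous paragraph finishes the argument, so that $\mu$ is the desired embedding of $K_{\mathbb{Z},\mathbb{Z}}^{0,1}(\mathbf{G})$ into $\mathbf{S}_{\mathbf{G}}/\kern-3pt\sim$.
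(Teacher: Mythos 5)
Your proposal is correct and follows essentially the same route as the paper: injectivity by observing that any fixed coordinate of disagreement eventually lies inside every window $[-n+k,n-k]$, and the homomorphism property modulo $\sim$ by checking that the cyclic wrap-around of $\rho$ in the finite kites produces discrepancies only at the boundary coordinates of each window, which $\sim$ absorbs. The paper likewise works out only representative multiplication cases and leaves the remaining operation cases to analogous bookkeeping.
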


\begin{proof}
To show that $\mu$ is one-one, let $u = \langle u_i\colon
i\in\mathbb{Z}\rangle$ and $w = \langle w_i\colon
i\in\mathbb{Z}\rangle$ be such that $u_i= w_i$ for all $i\neq \ell$
and $u_\ell \neq w_\ell$, where $\ell\in\mathbb{Z}$ is arbitrary,
but fixed. By definition, $\mu(u)\neq\mu(w)$ if{}f
$\mu^-(u)\not\sim\mu^-(w)$. Now, observe that for any $k\in \omega$,
if we take $n\geq k+|\ell|+1$, then $-n+k<\ell<n-k$, and therefore
$\ell\in [\![u(n)\neq w(n)[\!]$. It follows that
$$
\forall k\ \exists n\geq k\colon
[\![u(n)\neq w(n)[\!]\cap[-n+k,n-k]\neq\emptyset
$$
and so $\mu^-(u)\not\sim\mu^-(w)$ as needed.

To prove that $\mu$ is a homomorphism we proceed case by case. We only
present two cases of multiplication here. Let
$u = \langle a_i^{-1}\colon i\in \mathbb{Z}\rangle$ and
$w = \langle f_i\colon i\in \mathbb{Z}\rangle$. Then,
$uw = \langle a_i^{-1}f_i\vee e\colon i\in \mathbb{Z}\rangle$ and
$wu = \langle f_ia_{i+1}^{-1}\vee e\colon i\in \mathbb{Z}\rangle$.
Thus,
$$
\mu^-(uw) =
\bigl\langle\langle a_{-n}^{-1}f_{-n}\vee e,\dots,
a_0^{-1}f_0\vee e,\dots, a_{n}^{-1}f_{n}\vee e\rangle, n\in\omega\bigr\rangle
$$
and
$$
\mu^-(wu) =
\bigl\langle\langle f_{-n}a_{-n+1}^{-1}\vee e,\dots,
f_0a_1^{-1}\vee e,\dots, f_{n}a_{n+1}^{-1}\vee e\rangle, n\in\omega\bigr\rangle
$$
On the other hand, we have
\begin{align*}
\mu^-(u)\mu^-(w) &=
\bigl\langle\langle a_{-n}^{-1},\dots, a_0^{-1},\dots, a_{n}^{-1}\rangle,
n\in\omega\bigr\rangle\cdot
\bigl\langle\langle f_{-n},\dots, f_0,\dots, f_{n}\rangle,
n\in\omega\bigr\rangle\\
&= \bigl\langle\langle a_{-n}^{-1}f_{-n}\vee e,\dots, a_0^{-1}f_0\vee e,
\dots, a_{n}^{-1}f_n\vee e\rangle, n\in\omega\bigr\rangle
\end{align*}
and also,
\begin{align*}
\mu^-(w)\mu^-(u) &= \bigl\langle\langle f_{-n},\dots, f_0,\dots,
f_{n}\rangle, n\in\omega\bigr\rangle\cdot \bigl\langle\langle
a_{-n}^{-1},\dots, a_0^{-1},\dots, a_{n}^{-1}\rangle,
n\in\omega\bigr\rangle\\
&= \bigl\langle\langle f_{-n}a_{-n+1}^{-1}\vee e,\dots, f_0a_1^{-1}\vee e,
\dots, f_na_{{n+1}(\mathrm{mod}\ 2n+1)}^{-1}\vee e\rangle, n\in\omega\bigr\rangle\\
&= \bigl\langle\langle f_{-n}a_{-n+1}^{-1}\vee e,\dots, f_0a_1^{-1}\vee e,
\dots, f_na_{-n}^{-1}\vee e\rangle, n\in\omega\bigr\rangle.
\end{align*}
Thus, for every $n\in\omega$ we have that $\bigl(\mu^-(uw)\bigr)(n)
= \bigl(\mu^-(u)\mu^-(w)\bigr)(n)$ and the sequences
$\bigl(\mu^-(wu)\bigr)(n)$ and $\bigl(\mu^-(w)\mu^-(u)\bigr)(n)$
differ at most at the final place. It follows that
$$
[\![\bigl(\mu^-(uw)\bigr)(n)\neq
\bigl(\mu^-(u)\mu^-(w)\bigr)(n)]\!]\cap[-n+1,n-1] = \emptyset
$$
and
$$
[\![\bigl(\mu^-(wu)\bigr)(n)\neq
\bigl(\mu^-(w)\mu^-(u)\bigr)(n)]\!]\cap[-n+1,n-1] = \emptyset
$$
proving $\mu^-(uw)\sim \mu^-(u)\mu^-(w)$ and $\mu^-(wu)\sim
\mu^-(w)\mu^-(u)$, as required.
\end{proof}

Next we turn to $K_{\omega,\omega}^{0,1}(\mathbf{G})$. The argument
is by and large analogous to the one just used, but simpler. Take
the direct product $\prod_{n\in\omega}K_{n+1,n}^{0,1}(\mathbf{G})$.
We have that $S'_\mathbf{G} = U'_\mathbf{G}\cup L'_\mathbf{G}$,
where
\begin{align*}
U'_\mathbf{G} &=  \bigl\{\langle
a^{-1}_0,\dots,a^{-1}_i,\dots,a^{-1}_{n+1}\rangle,
\colon a^{-1}_i\in G^-,\ n\in\omega\bigr\}\\
L'_\mathbf{G} &=  \bigl\{\langle f_0,\dots,f_i,\dots,f_{n}\rangle
\colon f_i\in G^+,\ n\in\omega\bigr\}
\end{align*}
is a subuniverse of $\prod_{n\in\omega}K_{n+1,n}^{0,1}(\mathbf{G})$.
Let $\mathbf{S}'_\mathbf{G}$ be the corresponding subalgebra of
$\prod_{n\in\omega}K_{n+1,n}^{0,1}(\mathbf{G})$. Here we can embed
$K_{\omega,\omega}^{0,1}(\mathbf{G})$ directly into $\mathbf{S}'_\mathbf{G}$.
Define a map
$\nu\colon K_{\omega,\omega}^{0,1}(\mathbf{G})
\to \mathbf{S}'_{\mathbf{G}}$
putting
$$
\langle f_i\colon i\in\omega\rangle \mapsto
\bigl\langle\langle f_0,\dots, f_n\rangle, n\in\omega\bigr\rangle
$$
where $f_i\in G^+$ for all $i$, and
$$
\langle a_i^{-1}\colon i\in\omega\rangle \mapsto \bigl\langle\langle
a_0^{-1}, a_1^{-1}, \dots, a_{n+1}^{-1}\rangle,
n\in\omega\bigr\rangle
$$
where $a_i^{-1}\in G^-$ for all $i$. It is then rather straightforward to prove
the following lemma.

\begin{lemma}\label{embed-omega-case1}
The map $\nu$ above is an embedding of $K_{\omega,\omega}^{0,1}(\mathbf{G})$
into $\mathbf{S}'_{\mathbf{G}}$.
\end{lemma}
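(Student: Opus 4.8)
The plan is to verify directly that $\nu$ is a well-defined injection that preserves all operations, mimicking the structure of the proof of Lemma~\ref{embed-Z} but exploiting the fact that this time there is no need to pass to a quotient. First I would check that $\nu$ maps into $\mathbf{S}'_\mathbf{G}$: an element of $K_{\omega,\omega}^{0,1}(\mathbf{G})$ lying in $(G^+)^\omega$ is sent to an element of $L'_\mathbf{G}$ (its $n$-th component is the truncation to the first $n+1$ coordinates, which lies in the $(G^+)$-part of $K_{n+1,n}^{0,1}(\mathbf{G})$), and an element lying in $(G^-)^\omega$ is sent to an element of $U'_\mathbf{G}$ (truncation to the first $n+2$ coordinates, landing in the $(G^-)$-part of $K_{n+1,n}^{0,1}(\mathbf{G})$). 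Injectivity is immediate and needs no $\sim$: if $\langle u_i\rangle$ and $\langle w_i\rangle$ differ at coordinate $\ell$, then for every $n\ge\ell$ the $n$-th components of $\nu$ of the two elements already differ at coordinate $\ell$, so $\nu$ of the two are distinct in the product.

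Next I would check preservation of the lattice operations and the order, which is routine: meets and joins within $(G^+)^\omega$ and within $(G^-)^\omega$ are computed coordinatewise, as are meets and joins in each $K_{n+1,n}^{0,1}(\mathbf{G})$, and truncation commutes with coordinatewise operations; the cross case (one element from each part) is handled by the fact that the bottom part always sits below the top part in every kite, including in $K_{\omega,\omega}^{0,1}(\mathbf{G})$ and each $K_{n+1,n}^{0,1}(\mathbf{G})$. The substantive work is preservation of multiplication and the two divisions, done case by case on which of the two parts the two arguments come from. The key point is that for the maps $\lambda(i)=i$, $\rho(i)=i+1$ the formulas for $\cdot$, $\ld$, $\rd$ in $K_{\omega,\omega}^{0,1}(\mathbf{G})$ and in $K_{n+1,n}^{0,1}(\mathbf{G})$ are given by the \emph{same} local rule on coordinates (e.g.\ $\langle a_i^{-1}\rangle\cdot\langle f_i\rangle$ has $i$-th coordinate $a_i^{-1}f_i\vee e$ in both), and since in $K_{n+1,n}$ we have $|I|=|J|+1$ there is no ``wrap-around'' as there was for $K_{2n+1,2n+1}$ in Lemma~\ref{sim-is-cong} — the $\rho^{-1}(i)$-is-undefined branch occurs exactly at the last coordinate $i=n+1$, which matches the $\rho^{-1}(i)$-undefined branch for $i$ beyond the range of $\rho$ in $K_{\omega,\omega}^{0,1}(\mathbf{G})$. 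Consequently truncation of a product (or division) equals the product (or division) of truncations \emph{exactly}, for every $n$, with no discrepancy even at finitely many initial or final places, which is why $\nu$ lands in $\mathbf{S}'_\mathbf{G}$ itself rather than only in its quotient by $\sim$.

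Concretely, for multiplication I would write out the four cases. The case $\langle f_i\rangle\cdot\langle g_i\rangle=0$ is preserved since $0$ maps to $0$. The case $\langle a_i^{-1}\rangle\cdot\langle b_i^{-1}\rangle$ is coordinatewise in the negative cone and truncation clearly commutes with it. For $\langle a_i^{-1}\rangle\cdot\langle f_i\rangle=\langle a_{\lambda(i)}^{-1}f_i\vee e\rangle=\langle a_i^{-1}f_i\vee e\rangle$ and for $\langle f_i\rangle\cdot\langle a_i^{-1}\rangle=\langle f_ia_{\rho(i)}^{-1}\vee e\rangle=\langle f_ia_{i+1}^{-1}\vee e\rangle$, the $n$-th component of $\nu$ of the product uses exactly the coordinates $0,\dots,n$ of the inputs (for the first form) or $0,\dots,n$ with index shift to $a_1^{-1},\dots,a_{n+1}^{-1}$ (for the second form, and here the value $a_{n+1}^{-1}$ is precisely the one supplied by the $(n+2)$-truncation $U'_\mathbf{G}$), so it agrees with the corresponding product in $K_{n+1,n}^{0,1}(\mathbf{G})$ computed on the truncations. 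The divisions are handled the same way, the only slightly delicate spot being $\langle f_i\rangle\ld\langle g_i\rangle$ and $\langle g_i\rangle\rd\langle f_i\rangle$, whose values in $K_{\omega,\omega}^{0,1}(\mathbf{G})$ at coordinate $i$ involve $\rho^{-1}(i)$ resp.\ $\lambda^{-1}(i)$; for $\rho(i)=i+1$ this is undefined exactly at $i=0$, and in $K_{n+1,n}^{0,1}(\mathbf{G})$ it is undefined at $i=0$ as well, so again truncation commutes with the operation on the nose. I expect the main (though still modest) obstacle to be bookkeeping the index shifts correctly in these division cases and confirming that the off-by-one between $|I|=n+1$ and $|J|=n$ in $K_{n+1,n}^{0,1}(\mathbf{G})$ lines up with the truncation lengths $n+2$ and $n+1$ chosen in the definition of $\nu$; once that is pinned down, every case is a one-line verification and no quotient is needed, which is exactly why this lemma is ``simpler'' than the $K_{\mathbb{Z},\mathbb{Z}}$ case.
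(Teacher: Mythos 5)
Your overall strategy (check injectivity, then check each operation case by case on the truncations) is the natural one, and most of the cases do work exactly as you say; but the ``slightly delicate spot'' you flag is handled incorrectly, and it is precisely where the argument breaks. You claim that for both $\langle f_i\rangle\ld\langle g_i\rangle$ and $\langle g_i\rangle\rd\langle f_i\rangle$ the undefined branch occurs at $i=0$ in $K_{\omega,\omega}^{0,1}(\mathbf{G})$ and in each finite factor alike. That is true for $\ld$, which uses $\rho^{-1}$, but false for $\rd$, which by Definition~\ref{resid} uses $\lambda^{-1}$. In $K_{\omega,\omega}^{0,1}(\mathbf{G})$ the map $\lambda$ is the identity bijection of $\omega$, so the branch ``$\lambda^{-1}(i)$ undefined'' never occurs and $\langle g_i\rangle\rd\langle f_i\rangle=\langle g_if_i^{-1}\wedge e^{-1}\colon i\in\omega\rangle$; in the finite factor, however, the $I$-index set has one more element than $J$, that top index lies outside the range of $\lambda$, and the kite's $\rd$ forces the value $e^{-1}$ there. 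Concretely, with $\mathbf{G}=\mathbb{Z}$, $f$ the constant sequence $1$ and $g=0$ (all coordinates $e$), the $n$-th component of $\nu(g\rd f)$ is $\langle -1,\dots,-1\rangle$, whereas the $n$-th component of $\nu(g)\rd\nu(f)$ is $\langle -1,\dots,-1,0\rangle$. So $\nu$ does \emph{not} commute with $\rd$ (equivalently, with right negation $0\rd x$) on the nose, and your central claim that ``truncation of a product or division equals the product or division of truncations exactly'' fails in this one case.

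The failure is confined to the final coordinate of each factor, i.e.\ exactly the kind of discrepancy that the relation $\sim$ of Lemma~\ref{sim-is-cong} (bounded initial and final differences) is designed to absorb. So the correct repair is not the ``no quotient needed'' route you describe, but the same device as in Lemma~\ref{embed-Z}: introduce the analogue of $\sim$ on $\mathbf{S}'_\mathbf{G}$ (differences allowed on a bounded number of final places), check it is a congruence by the same argument as before, and verify that $\nu$ composed with the quotient map is injective (the witnessing coordinate $\ell$ eventually lies strictly inside the window $[0,n-k]$) and a homomorphism. All the other cases of your case analysis --- both multiplications involving one factor from each part, the coordinatewise operations on $(G^-)$-parts, $\langle f\rangle\ld\langle g\rangle$, and the trivial cases with value $0$ or $1$ --- are exact, as you computed, so only the $\rd$/right-negation case genuinely needs the quotient.
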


Finally, we deal with $K_{\omega,\omega}^{1,0}(\mathbf{G})$. We
again use the algebra $\mathbf{S}''_\mathbf{G}$, but this time
define a map $\nu'\colon K_{\omega,\omega}^{1,0}(\mathbf{G}) \to
\mathbf{S}'_{\mathbf{G}}$ by reversing the ordering in $\nu$,
namely, by
$$
\langle f_i\colon i\in\omega\rangle \mapsto
\bigl\langle\langle f_n,f_{n-1},\dots, f_0\rangle, n\in\omega\bigr\rangle
$$
where $f_i\in G^+$ for all $i$, and
$$
\langle a_i^{-1}\colon i\in\omega\rangle \mapsto \bigl\langle\langle
a_{n+1}^{-1}, a_n^{-1}, \dots, a_0^{-1}\rangle,
n\in\omega\bigr\rangle
$$
where $a_i^{-1}\in G^-$ for all $i$. We obtain the lemma below.

\begin{lemma}\label{embed-omega-case2}
The map $\nu'$ above is an embedding of $K_{\omega,\omega}^{0,1}(\mathbf{G})$
into $\mathbf{S}'_{\mathbf{G}}$.
\end{lemma}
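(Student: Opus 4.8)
The plan is to follow the proof of Lemma~\ref{embed-omega-case1} almost line for line, the one new ingredient being an analysis of what reversing the order of coordinates does inside each finite factor $K_{n+1,n}^{0,1}(\mathbf{G})$. So the first step is to observe that, at each level $n$, reversing the order of the coordinates (on both the $(G^-)$-part and the $(G^+)$-part simultaneously) is an isomorphism of $K_{n+1,n}^{0,1}(\mathbf{G})$ onto the finite kite in which $\lambda$ becomes the successor map and $\rho$ becomes the identity; this is a direct unwinding of the definition of multiplication and of Definition~\ref{resid}. Composing $\nu'$ with these reversal isomorphisms, one sees that $\nu'$ is exactly the ``symmetric'' analogue of $\nu$: the coordinatewise map embedding $K_{\omega,\omega}^{1,0}(\mathbf{G})$ (whose $\lambda$ is the successor and whose $\rho$ is the identity) into $\prod_{n\in\omega}K_{n+1,n}^{1,0}(\mathbf{G})$. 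Thus it suffices to run the argument of Lemma~\ref{embed-omega-case1} with the roles of $\lambda$ and $\rho$ --- equivalently, of the two divisions --- interchanged.

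Concretely, I would proceed as in the previous lemma. For injectivity: if $u=\langle u_i\colon i\in\omega\rangle$ and $w=\langle w_i\colon i\in\omega\rangle$ differ at some coordinate $\ell$, then for every $n\geq\ell$ the blocks $\nu'(u)(n)$ and $\nu'(w)(n)$ differ at the place into which $\ell$ is carried by the level-$n$ reversal, so $\nu'(u)\neq\nu'(w)$ already as elements of $\prod_{n\in\omega}K_{n+1,n}^{0,1}(\mathbf{G})$; since no quotient is taken here, this is all that is required. For the homomorphism property, the lattice operations and the constants $0,1$ are preserved because they act coordinatewise and coordinate reversal is a bijection of each finite coordinate set. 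For multiplication and for the two divisions one checks, by the usual case split according to whether the arguments lie in $(G^+)^{\omega}$ or in $(G^-)^{\omega}$, that applying $\nu'$ and then computing in $\mathbf{S}'_\mathbf{G}$ yields the same sequence as computing first in $K_{\omega,\omega}^{1,0}(\mathbf{G})$ and then applying $\nu'$; in each case the successor occurring in $\lambda$ of the source matches the successor occurring in $\rho$ of the target factors precisely because of the reversal, so the verification reduces, level by level, to a short identity in $\mathbf{G}$, just as in Lemmas~\ref{embed-Z} and~\ref{embed-omega-case1}.

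The one point that needs real care --- and hence the step I expect to be the main obstacle --- is the index bookkeeping at the two ends of each finite block. One must check that the coordinate of $K_{\omega,\omega}^{1,0}(\mathbf{G})$ at which $\lambda^{-1}$ fails to be defined (namely the $0$-th coordinate) is sent, under the level-$n$ reversal, precisely onto the coordinate of $K_{n+1,n}^{0,1}(\mathbf{G})$ lying outside the range of $\rho$, and symmetrically at the opposite end; only then do the source and target multiplications agree on the nose rather than merely up to the congruence $\sim$ of Lemma~\ref{sim-is-cong}. Because the present embedding lands directly in $\mathbf{S}'_\mathbf{G}$ with no passage to a $\sim$-quotient, this exact matching is essential; once it is arranged correctly, all remaining verifications are routine computations in $\mathbf{G}$, and the proof concludes exactly as for $\nu$.
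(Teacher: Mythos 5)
Your overall strategy --- compose the reversed truncation with the level-$n$ reversal isomorphisms $K_{n+1,n}^{0,1}(\mathbf{G})\cong K_{n+1,n}^{1,0}(\mathbf{G})$ and then rerun the computation of Lemma~\ref{embed-omega-case1} with the roles of $\lambda$ and $\rho$ interchanged --- is exactly the intended one (the paper gives no further detail here). But the point you yourself single out as needing real care is where the argument breaks, and it cannot be ``arranged correctly'' as you claim. Under the level-$n$ reversal the source coordinate $0$ (the one outside the range of $\lambda$ in $K_{\omega,\omega}^{1,0}(\mathbf{G})$) is sent to the top coordinate of $K_{n+1,n}^{0,1}(\mathbf{G})$, i.e.\ to the coordinate outside the range of $\lambda$ there, not outside the range of $\rho$; that alignment does make $\rd$ on pairs from the $(G^+)$-part match on the nose. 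The trouble is the other division: in the source $\rho$ is the identity, hence surjective, so for $f,g\in (G^+)^{\omega}$ the element $f\ld g$ has the entry $f_i^{-1}g_i\wedge e^{-1}$ at \emph{every} coordinate $i$, whereas in each factor $K_{n+1,n}^{0,1}(\mathbf{G})$ the coordinate $0$ lies outside the range of $\rho$, so the corresponding entry of $\nu'(f)(n)\ld\nu'(g)(n)$ is forced to be $e^{-1}$, while the corresponding entry of $\bigl(\nu'(f\ld g)\bigr)(n)$ is $f_n^{-1}g_n\wedge e^{-1}$, which is different in general (take $\mathbf{G}=\mathbb{Z}$, $f_n=1$, $g_n=0$). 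Since the source has only one ``missing'' coordinate (for $\lambda$), while every finite factor $K_{n+1,n}$ has one missing coordinate for $\lambda$ and one for $\rho$, no windowing can make both divisions agree exactly; so $\nu'$, as it stands, does not preserve $\ld$ (nor $\ln$) and is not a homomorphism into $\mathbf{S}'_{\mathbf{G}}$ itself.

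What is true, and all that the application in Theorem~\ref{generation} requires, is that the two sides differ only at a bounded number of places at one end of each block; hence $\nu'$ becomes an embedding after composing with the quotient by a one-sided analogue of the congruence $\sim$ of Lemma~\ref{sim-is-cong}, exactly as in Lemma~\ref{embed-Z}. The same repair is needed for $\nu$ and the operation $\rd$ in Lemma~\ref{embed-omega-case1}, so the defect is inherited from the statement rather than introduced by you; but your proof as written, which asserts exact level-by-level agreement for both divisions and concludes ``exactly as for $\nu$'', does not go through.
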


\begin{theorem}\label{generation}
The variety $\mathsf{K}$ is generated by all finite-dimensional kites.
\end{theorem}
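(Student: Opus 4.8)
The plan is to reduce to subdirectly irreducible kites via Corollary~\ref{si-kites-generate}, and then dispose of the five types listed in Theorem~\ref{classif} one at a time. Write $\mathsf{V}$ for the variety generated by all finite-dimensional kites. The inclusion $\mathsf{V}\subseteq\mathsf{K}$ is trivial, since every finite-dimensional kite is a kite, so it suffices to prove $\mathsf{K}\subseteq\mathsf{V}$, and by Corollary~\ref{si-kites-generate} this reduces to showing that each subdirectly irreducible kite belongs to $\mathsf{V}$. By Theorem~\ref{classif}, such a kite is isomorphic to one of the five types. Types~(1) and~(5), that is $K_{n,n}^{0,1}(\mathbf{G})$ and $K_{n+1,n}^{0,1}(\mathbf{G})$, are finite-dimensional by definition, hence lie in $\mathsf{V}$ outright.

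For the three infinite-dimensional types I would invoke the embedding lemmas already established. For type~(2), Lemma~\ref{embed-Z} embeds $K_{\mathbb{Z},\mathbb{Z}}^{0,1}(\mathbf{G})$ into $\mathbf{S}_{\mathbf{G}}/\!\sim$; now $\mathbf{S}_{\mathbf{G}}$ is a subalgebra of $\prod_{n\in\omega}K_{2n+1,2n+1}^{0,1}(\mathbf{G})$, each factor being $(2n+1)$-dimensional and hence finite-dimensional, and $\sim$ is a congruence on that product by Lemma~\ref{sim-is-cong}, so $\mathbf{S}_{\mathbf{G}}/\!\sim$ is a homomorphic image of a subalgebra of a product of finite-dimensional kites and therefore lies in $\mathsf{V}$. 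Since a subalgebra of a member of a variety is again in that variety, $K_{\mathbb{Z},\mathbb{Z}}^{0,1}(\mathbf{G})\in\mathsf{V}$. For types~(3) and~(4), Lemmas~\ref{embed-omega-case1} and~\ref{embed-omega-case2} embed $K_{\omega,\omega}^{0,1}(\mathbf{G})$ and $K_{\omega,\omega}^{1,0}(\mathbf{G})$ into $\mathbf{S}'_{\mathbf{G}}$, a subalgebra of $\prod_{n\in\omega}K_{n+1,n}^{0,1}(\mathbf{G})$, whose factors are $(n+1)$-dimensional; hence these kites, too, belong to $\mathsf{V}$. Combining the five cases gives $\mathsf{K}\subseteq\mathsf{V}$, whence $\mathsf{K}=\mathsf{V}$. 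In particular, since the variety generated by all finite-dimensional kites is the varietal join $\bigvee_{n\in\omega}\mathsf{K_n}$, we also obtain $\mathsf{K}=\bigvee_{n\in\omega}\mathsf{K_n}$.

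Since all the substantial work --- associativity, residuation, the subdirect-irreducibility classification, and above all the three embedding constructions --- is already done, nothing here is genuinely hard. The only point demanding care is the varietal bookkeeping: one must check that forming $\mathbf{S}_{\mathbf{G}}$ (a subalgebra), then $\mathbf{S}_{\mathbf{G}}/\!\sim$ (a homomorphic image), and finally a subalgebra of the latter keeps us inside $\mathsf{HSP}$ of finite-dimensional kites, and to note explicitly that every factor occurring in the relevant products is finite-dimensional. Both are routine, but should be stated.
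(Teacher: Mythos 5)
Your proof is correct and follows essentially the same route as the paper: reduce to subdirectly irreducible kites via Corollary~\ref{si-kites-generate}, note that the finite-dimensional types of Theorem~\ref{classif} are already in the generated variety, and place the three infinite-dimensional types inside $\mathsf{HSP}$ of finite-dimensional kites via Lemmas~\ref{sim-is-cong}, \ref{embed-Z}, \ref{embed-omega-case1} and~\ref{embed-omega-case2}. You merely make explicit the classification step and the $\mathsf{SHSP}$ bookkeeping that the paper's proof leaves implicit.
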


\begin{proof}
By Corollary~\ref{si-kites-generate}, the variety
$\mathsf{K}$ is generated by all subdirectly irreducible kites.
Let $\mathcal{K}_{\mathrm{fin}}$ be the class of all finitely dimensional kites.
By Lemmas~\ref{embed-Z}, \ref{embed-omega-case1}, and~\ref{embed-omega-case2},
every subdirectly irreducible kite belongs to
$SHSP(\mathcal{K}_{\mathrm{fin}})$. Thus $\mathsf{K} =
V(\mathcal{K}_{\mathrm{fin}})$ as claimed.
\end{proof}

\begin{cor}
The variety $\mathsf{K}$ generated by all kites is the varietal join of
varieties $\mathsf{K}_n$, generated by $n$-dimensional kites. Briefly,
$$
\mathsf{K} = \bigvee_{n\in\omega}\mathsf{K}_n.
$$
\end{cor}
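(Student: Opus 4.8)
The plan is to read this off directly from Theorem~\ref{generation}. That theorem states $\mathsf{K} = V(\mathcal{K}_{\mathrm{fin}})$, where $\mathcal{K}_{\mathrm{fin}}$ is the class of all finite-dimensional kites. Since, by definition, a kite is finite-dimensional exactly when it is $n$-dimensional for some $n\in\omega$, we have $\mathcal{K}_{\mathrm{fin}} = \bigcup_{n\in\omega}\mathcal{K}_n$, and hence $\mathsf{K} = V\bigl(\bigcup_{n\in\omega}\mathcal{K}_n\bigr)$.

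It then remains to invoke the standard fact that, in the (complete) lattice of subvarieties of any fixed variety --- here $\mathsf{psBL}$ --- the variety generated by a union of classes coincides with the join of the varieties generated by the individual classes, i.e. $V\bigl(\bigcup_{n\in\omega}\mathcal{C}_n\bigr) = \bigvee_{n\in\omega}V(\mathcal{C}_n)$. One inclusion is immediate: each $V(\mathcal{C}_n)$ is contained in $V\bigl(\bigcup_{n\in\omega}\mathcal{C}_n\bigr)$, so their join is too. For the reverse inclusion, $\bigvee_{n\in\omega}V(\mathcal{C}_n)$ is a variety containing every $\mathcal{C}_n$, hence containing $\bigcup_{n\in\omega}\mathcal{C}_n$, and therefore containing the variety that this union generates. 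Applying this with $\mathcal{C}_n = \mathcal{K}_n$ and recalling that $\mathsf{K}_n = V(\mathcal{K}_n)$ gives $\mathsf{K} = \bigvee_{n\in\omega}\mathsf{K}_n$, as claimed.

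There is essentially no obstacle here: all the mathematical content sits in Theorem~\ref{generation}, and what is left is a routine universal-algebra observation. The only point deserving a moment's care is that the join is infinitary, so one should note that the lattice of subvarieties is complete; but this is automatic, so the argument above is complete.
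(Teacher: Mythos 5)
Your proof is correct and follows the same route as the paper: the corollary is stated there as an immediate consequence of Theorem~\ref{generation}, exactly via the observation that the finite-dimensional kites are $\bigcup_{n\in\omega}\mathcal{K}_n$ and that generating a variety from a union of classes yields the join of the corresponding varieties. Nothing further is needed.
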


\section{Applications}\label{appl}

Let $\mathbf{A}=(A;\ld,\rd, \cdot, 0,1)$ be a pseudo BL-algebra and
let $g\in A \setminus \{1\}.$ Then there is a  filter, $V,$ of $A$
not containing $g$ and is maximal with respect to this property.  We
call it a \emph{value} of $g,$ and the filter $V^*$ generated by $V$
and the element $g$ is said to be a \emph{cover} of $V$. As in
$\ell$-groups, we say that $\mathbf{A}$ is \emph{normal-valued} if
every value is normal in its cover. Let $\mathsf{NVpsBL}$ be the
class of normal-valued pseudo BL-algebras. In \cite{DGK} it was
proved that $\mathsf{NVpsBL}$ is a variety, and in contrast to the
variety of $\ell$-groups, it is not the greatest proper subvariety
of the variety of $\ell$-groups because $\mathsf{NVpsBL}$ is a
proper subvariety of the system of all pseudo BL-algebras
$\mathbf{A}$ such that every maximal filter of $\mathbf A$ is normal
that is also a variety, see \cite{DGK}.

According to Wolfenstein~\cite[Thm 41.1]{Dar}, an $\ell$-group $G$
is normal-valued if{}f  every $a,b \in G^-$ satisfy $a^2b^2 \le ba$,
or in our language
$$
a^2\cdot b^2 \le b\cdot a.\eqno(1)
$$
In \cite{BDK} it was shown a large variety of pseudo BL-algebras
that are normal-valued iff they satisfy (1). We note that the same
is true for kites.

\begin{theorem}\label{norm-val}
Let $\mathbf{G}$ be an $\ell$-group. Then the kite
$K_{I,J}^{\lambda,\rho}(\mathbf{G})$ is normal-valued if and only if
{\rm (1)} holds for all $a,b \in
K_{I,J}^{\lambda,\rho}(\mathbf{G}).$
\end{theorem}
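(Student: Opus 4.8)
The plan is to show that each of the two conditions in the theorem is equivalent to the statement that $\mathbf{G}$ (equivalently, the $\ell$-group $\mathbf{G}^I$, equivalently its negative cone) is normal-valued, and then to appeal to Wolfenstein's criterion quoted above. The guiding principle is that everything of substance in a kite lives inside $(G^-)^I$.

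First I would dispose of the inequality (1). If either of $a,b$ lies in $(G^+)^J$, then (1) holds trivially: when $a\in(G^+)^J$ we have $a^2=0$, so the left-hand side is $0$; when $a\in(G^-)^I$ and $b\in(G^+)^J$, a one-line computation with the multiplication rules gives $a^2\cdot b^2 = a^2\cdot 0 = 0 \le b\cdot a$, and symmetrically for $a\in(G^+)^J$, $b\in(G^-)^I$, and for $a,b\in(G^+)^J$. Hence (1) holds in $K_{I,J}^{\lambda,\rho}(\mathbf{G})$ if and only if it holds for all $a,b\in(G^-)^I$. But on $(G^-)^I$ the operations $\cdot,\ld,\rd$ and the order of the kite are exactly those of the negative cone of the $\ell$-group $\mathbf{G}^I$; so, by Wolfenstein's criterion applied to $\mathbf{G}^I$, (1) holds in the kite if and only if $\mathbf{G}^I$ is normal-valued, and since normal-valued $\ell$-groups form a variety and $\mathbf{G}$ embeds into its power $\mathbf{G}^I$, this happens if and only if $\mathbf{G}$ itself is normal-valued.

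Next I would analyze the filter structure of the kite. The key observation is that $(G^-)^I$ is the \emph{only} maximal filter: any filter containing an element $f\in(G^+)^J$ contains $f\cdot f = 0$ and is therefore improper, so every proper filter is contained in $(G^-)^I$, while $(G^-)^I$ is itself a proper filter by Lemma~\ref{up-max-norm}. Since $(G^-)^I$ is an up-set of the kite closed under multiplication, the filters of $K_{I,J}^{\lambda,\rho}(\mathbf{G})$ contained in $(G^-)^I$ are precisely the filters of the residuated lattice $(G^-)^I$, and conjugation of an element of $(G^-)^I$ by an element of $(G^-)^I$ is computed coordinatewise, i.e.\ within that residuated lattice. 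Consequently, for $g\in(G^-)^I\setminus\{1\}$ a value of $g$ in the kite is exactly a value of $g$ in the residuated lattice $(G^-)^I$; the associated cover is the same whether it is formed in the kite or in $(G^-)^I$ (it is the filter generated by the value together with $g$, and it stays inside $(G^-)^I$); and the value is normal in its cover in the kite if and only if it is so in $(G^-)^I$. On the other hand, if $g\in(G^+)^J$ then every proper filter avoids $g$, so the unique value of $g$ is the maximal filter $(G^-)^I$, which is normal in its cover (the whole kite) by Lemma~\ref{up-max-norm}. Thus $K_{I,J}^{\lambda,\rho}(\mathbf{G})$ is normal-valued if and only if the residuated lattice $(G^-)^I$, that is, the negative cone of $\mathbf{G}^I$, is normal-valued.

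Finally, I would close the loop via the classical dictionary between an $\ell$-group and its negative cone: the lattice of convex $\ell$-subgroups of an $\ell$-group $H$ is order-isomorphic to the lattice of convex subalgebras (filters) of $H^-$, this isomorphism carries values to values and covers to covers, and it preserves normality; hence $H^-$ is normal-valued if and only if $H$ is (cf.~\cite{Dar}). Applying this with $H=\mathbf{G}^I$ and combining with the two previous steps gives
$$
K_{I,J}^{\lambda,\rho}(\mathbf{G}) \text{ normal-valued} \iff \mathbf{G}^I \text{ normal-valued} \iff \mathbf{G} \text{ normal-valued} \iff (1) \text{ holds in } K_{I,J}^{\lambda,\rho}(\mathbf{G}),
$$
which is the claim. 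The main obstacle is the bookkeeping of the third paragraph — checking carefully that values, covers, and the relation ``value normal in its cover'' are genuinely unchanged when one passes between the kite and the substructure $(G^-)^I$ — together with the invocation of the $\ell$-group/negative-cone correspondence in the last step; a reader who prefers to avoid that correspondence can instead redo Wolfenstein's argument directly inside the residuated lattice $(G^-)^I$, proving there that normal-valuedness is equivalent to (1), and that is where the real computational work would lie.
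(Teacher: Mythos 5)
Your proof is correct and follows essentially the same route as the paper, whose proof is just the one-line remark that the claim ``follows from the definition of kites and Wolfenstein's criterion'': in both cases the point is that all proper filters, values and covers of a kite live in $(G^-)^I$, which carries exactly the negative-cone structure of $\mathbf{G}^I$, so normal-valuedness of the kite and validity of (1) each reduce to Wolfenstein's criterion for $\mathbf{G}$. Your write-up merely supplies the bookkeeping the paper leaves implicit.
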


\begin{proof}
It follows from the definition of kites and Wolfenstein's
criterion,  \cite[Thm 41.1]{Dar}.
\end{proof}

We note that if $x \in K_{I,J}^{\lambda,\rho}(\mathbf{G}),$ then
$$
x^2 = 0 \quad \mbox{or}\quad  (\ln x)^2 =0. \eqno(2)
$$
Let $\mathsf{V}(K_{I,J}^{\lambda,\rho}(\mathbf{G}))$ denote the
variety of pseudo BL-algebras generated by a kite
$K_{I,J}^{\lambda,\rho}(\mathbf{G})$.


Let $n\ge 2$ be a fixed integer and we set $J_n=\{1,\ldots,n\},$
$I_n=\{1,\ldots,n,n+1\},$ $J_n'=\{2,\ldots,n\},$ and
$I_n'=\{2,\ldots,n,n+1\}.$ We set $\lambda,\rho: J_n\to I_n$ by
$\lambda(i)=i$ and $\rho(i)=i+1$ for each $i \in J_n.$ Take two
terms $f_\sim(x):= \ln x$ and $f_-(x)=\rn x$.  Then on
$K_{I_n,J_n}^{\lambda_n,\rho_n}(\mathbf{G})$ we have
$$
f^{2n+1}_\sim(x) \in \{0,1\} =
B(K_{I_n,J_n}^{\lambda_n,\rho_n}(\mathbf{G}))\eqno(3)
$$
for every $x\in K_{I_n,J_n}^{\lambda_n,\rho_n}(\mathbf{G}).$

For abbreviation, given an integer $n\ge 1,$ we set $\mathbb
Z_n^\dag = K_{I_n,J_n}^{\lambda_n,\rho_n}(\mathbf{Z}).$ In addition,
we define $\mathbb Z_0^\dag:=
K_{I_1,J_1}^{\lambda_1,\rho_1}(\mathbf{O}),$ where $\mathbf O$ is
the trivial $\ell$-group consisting only from the identity. Then
$\mathbb Z_0^\dag$ is the two-element Boolean algebra, therefore,
$\mathbb Z_0^\dag$ generates the variety  of Boolean algebras, $\mathsf{BA}.$

\begin{theorem}\label{covers}
For any integer $n \ge 1$,  ${\mathsf{V}}(\mathbb Z_{n}^\dag)$ is
a  cover of the variety $\mathsf{BA},$ and $n\ne m$ implies
${\mathsf{V}}(\mathbb Z_{n}^\dag) \ne {\mathsf{V}}(\mathbb Z_{m}^\dag).$
\end{theorem}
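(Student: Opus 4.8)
The plan is to show two things: first, that $\mathsf{V}(\mathbb{Z}_n^\dag)$ covers $\mathsf{BA}$ for every $n \ge 1$; second, that these varieties are pairwise distinct. For the covering claim, I would argue that any variety $\mathsf{W}$ with $\mathsf{BA} \subsetneq \mathsf{W} \subseteq \mathsf{V}(\mathbb{Z}_n^\dag)$ must equal $\mathsf{V}(\mathbb{Z}_n^\dag)$. The key is that $\mathbb{Z}_n^\dag = K_{I_n,J_n}^{\lambda_n,\rho_n}(\mathbf{Z})$ is, by Lemma~\ref{si-sizes}(2) and the analysis in Section 5, subdirectly irreducible (it is of type (5) in Theorem~\ref{classif}). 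So any proper subvariety of $\mathsf{V}(\mathbb{Z}_n^\dag)$ omits $\mathbb{Z}_n^\dag$, hence by J\'onsson-type reasoning, since $\mathbb{Z}_n^\dag$ is finitely generated and $\mathsf{V}(\mathbb{Z}_n^\dag)$ is generated by a single subdirectly irreducible algebra, the only proper subvariety available is one whose subdirectly irreducibles are proper quotients/subalgebras of ultrapowers of $\mathbb{Z}_n^\dag$. First I would identify these subalgebras and quotients: the normal filters of $\mathbb{Z}_n^\dag$ correspond (via Lemmas~\ref{hyper-filters}, \ref{restrict}) to convex normal subgroups of $\mathbf{Z}$, of which there are only two, $\{0\}$ and $\mathbf{Z}$ itself; the quotient by the latter collapses the head and tail entirely, giving the two-element Boolean algebra. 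Subalgebras generated by elements other than $0,1$ will again generate $\mathsf{V}(\mathbb{Z}_n^\dag)$ because a single nontrivial one-dimensional element generates everything under double negations (the rotation argument of Lemma~\ref{rotations} and the proof of Theorem~\ref{si-kites}). Thus the only proper subvariety is $\mathsf{BA}$, giving the cover.

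For the distinctness claim, I would exhibit an identity separating $\mathbb{Z}_n^\dag$ from $\mathbb{Z}_m^\dag$. The natural candidate is the ``period'' of iterated negation. From equation~(3) we know $f_\sim^{2n+1}(x) \in \{0,1\}$ holds identically on $\mathbb{Z}_n^\dag$; more precisely, on $\mathbb{Z}_n^\dag$ applying $\ln$ repeatedly rotates the tail coordinate through the head by the map $\rho \circ \lambda^{-1}$, which on $I_n = \{1,\dots,n+1\}$ with $\lambda(i)=i$, $\rho(i)=i+1$ is the shift $i \mapsto i+1$ that eventually falls off the end, so after enough iterations a one-dimensional tail element is sent to $1$ (or a head element to $0$). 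The number of steps needed is tied to $n$. So the identity $f_\sim^{2k+1}(x) = f_\sim^{2k+3}(x)$, or more safely $f_\sim^{2k+1}(x) \vee \ln f_\sim^{2k+1}(x) = 1$ (Boolean-ness of the $(2k+1)$-st iterate), holds in $\mathbb{Z}_n^\dag$ precisely when $k \ge n$. Checking that this identity \emph{fails} in $\mathbb{Z}_m^\dag$ for $m > k$ is a direct computation: take a one-dimensional element $a$ with $a(1) \ne e^{-1}$; then $f_\sim^{2k+1}(a)$ is still one-dimensional and below $1$ (as $\lambda^{-1}, \rho^{-1}$ remain defined for the first $k < m$ steps, by Lemma~\ref{rotations}(1),(3)), so it is not Boolean. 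Hence for $m > n$ the identity valid on $\mathbb{Z}_n^\dag$ fails on $\mathbb{Z}_m^\dag$, so $\mathbb{Z}_m^\dag \notin \mathsf{V}(\mathbb{Z}_n^\dag)$, and the varieties are distinct.

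I expect the main obstacle to be the covering half, specifically verifying carefully that there is no intermediate variety strictly between $\mathsf{BA}$ and $\mathsf{V}(\mathbb{Z}_n^\dag)$. The subtlety is that $\mathsf{V}(\mathbb{Z}_n^\dag)$, being generated by a finite-type but infinite (since $\mathbf{Z}$ is infinite) subdirectly irreducible, is not obviously ``small'' in the J\'onsson sense; one has to invoke congruence distributivity of $\mathsf{psBL}$ (it is, being a variety of residuated lattices, hence of FL-algebras with distributive — indeed relatively complemented in the congruence lattice — congruences) so that J\'onsson's lemma applies, and then argue that the subdirectly irreducibles of any subvariety lie in $HSP_U(\{\mathbb{Z}_n^\dag\})$. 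One then shows that any subdirectly irreducible member there is either trivial, two-element Boolean, or contains a copy of $\mathbb{Z}_n^\dag$ (via the generation argument above, using that an ultrapower of $\mathbb{Z}_n^\dag$ is a kite over an ultrapower of $\mathbf{Z}$, and a nontrivial one-dimensional element in a normal-filter-quotient still regenerates the whole structure). Assembling these pieces cleanly — in particular handling the ultrapower step and confirming that subalgebras/quotients of $\mathbb{Z}_n^\dag$-kites over nontrivial $\ell$-groups still generate the full variety — is where the real work lies; the distinctness half is comparatively routine once the right separating identity is pinned down.
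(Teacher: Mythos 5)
Your proposal takes essentially the same route as the paper's proof: congruence distributivity plus J\'onsson's Lemma to place the subdirectly irreducibles of any subvariety in $\mathit{HSP_U}(\mathbb Z^\dag_n)$, the regeneration argument that any nontrivial element (in a subalgebra or ultrapower-derived quotient) generates an isomorphic copy of $\mathbb Z^\dag_n$ (the paper's Claims 1 and 3), hence every nontrivial subdirectly irreducible contains $\mathbb Z^\dag_n$ and the cover of $\mathsf{BA}$ follows; and distinctness via the iterated-negation law that $f_\sim^{2n+1}(x)$ is Boolean, valid in $\mathbb Z^\dag_n$ but refuted in $\mathbb Z^\dag_m$ for $m>n$. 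The only differences are cosmetic: you phrase Boolean-ness directly as the identity $f_\sim^{2n+1}(x)\vee \ln f_\sim^{2n+1}(x)=1$ and refute it with a one-dimensional witness, whereas the paper uses the witness $\langle -1,\dots,-1\rangle$ and routes the persistence of its condition (3) through an explicit $\mathit{HSP_U}$ preservation argument (Claim 2).
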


\begin{proof}
For $n = 0$, the variety ${\mathsf{V}}(\mathbb Z_{n}^\dag)$ is
precisely the variety of \emph{product logic algebras} and it is well-known that
this variety covers $\mathsf{BA}$.
For $n = 1$, it was proved in
\cite[Thm 11]{JiMo} that the variety $\mathsf V(\mathbb Z^\dag_1)$
covers $\mathsf{BA}$.  Thus, we can assume that $n \ge 1$.

It follows from Theorem~\ref{classif} that $\mathbb Z_n^\dag$ is
subdirectly irreducible.

\medskip
\emph{Claim 1. Any nontrivial element of $\mathbb
Z^\dag_n$ generates a subalgebra of $\mathbb Z^\dag_n$ that is an
isomorphic copy of $\mathbb Z^\dag_n.$}

\medskip
Assume $a=\langle a_1^{-1},\ldots,a_{n+1}^{-1}\rangle$ is an element
from $(G^-)^I$ and let $A$ be the subalgebra of $\mathbb Z^\dag_n$
generated by $a$. Let $i_0$ be the first index such that is
different of $e^{-1}$. There is an integer $n_0$ such that
$f_\sim^{n_0}(a)=\langle e^{-1},\ldots, e^{-1},a_{i_0}^{-1} \rangle
\in A$. There is also an integer $m$ such that $f_-^m(\langle
e^{-1},\ldots, e^{-1},a_{i_0}^{-1} \rangle) =\langle
a_{i_0},e^{-1},\ldots,e^{-1}\rangle \in A.$ In addition, for every
$i=1,\ldots, n+1,$ the element $x_i= \langle
b_1^{-1},\ldots,b_{n+1}^{-1}\rangle$ belongs to $A,$ where $b_i^{-1}
= a_{i_0}^{-1}$ and $b_{j}^{-1}=e^{-1}$ for $j\ne i.$

Since $\ln\langle a_1^{-1},\ldots,a^{-1}_{n+1}\rangle =
\ln\langle a_1^{-1},\ldots,a_n^{-1}\rangle, $ $\langle f_1,\ldots,f_n\rangle
= \langle e^{-1},f_1^{-1},\ldots,f_n^{-1}\rangle$, we see that
the algebra $A$ can be generated equivalently by some appropriate
element from $(G^{-})^I$ or from $(G^+)^J$.

Hence, if $a=\mbox{g.c.d.}\{a_1,\ldots,a_{n+1}\},$    we see that
$A$ is generated e.g. by the element $\langle a^{-1},e^{-1},\ldots,
e^{-1}\rangle,$ and it is an isomorphic copy of $\mathbb Z^\dag_n.$

\medskip
\emph{Claim 2.  If $\mathbf{A}$ is an algebra from
$\mathsf{V}(\mathbb Z^\dag_n)$, then for any $x\in A$, we have
$$
f_\sim^{2n+1}(x) \in B(A).\eqno(4)
$$}

\medskip
Indeed, let $C$ be a subdirectly irreducible algebra
from $\mathsf{V}(\mathbb Z^\dag_n).$  By the J\'onsson Lemma, $C$ is
a homomorphic image of a subalgebra $D$ of an ultrapower $\mathcal
U$ of   $\mathbb Z^\dag_n$ on the index set $U.$ By (3), if $x \in
\mathbb Z^\dag_n,$ then $f_\sim^{2n+1}(x)\in \{0,1\}= B(\mathbb
Z^\dag_n).$ If $x \in \langle t_u\colon u \in U\rangle, $ then
$f_\sim^{2n+1}(x) \in B((\mathbb Z^\dag_n)^U).$ Similarly, if $x \in
\langle t_u \colon u\in U\rangle/\mathcal U,$ $f^{2n+1}(x) \in
B(\mathbb (\mathbb Z^\dag_n)^U/\mathcal U).$ In the same way, we can
prove that if $x \in C,$ then $f^{2n+1}_\sim(x) \in B(C).$ The
general case follows from the statement follows from the fact that
$A$ is isomorphic to  a subdirect product of algebras from $\mathit
{HSP}_U(\mathbb Z^\dag_n).$

\medskip
\emph{Claim 3. Any subdirectly irreducible algebra
of $\mathsf{V}(\mathbb Z^\dag_n)$ is either the two-element Boolean
algebra or has a subalgebra isomorphic to $\mathbb Z^\dag_n.$}

\medskip
Let $C$ be a subdirectly irreducible algebra of
$\mathsf{V}(\mathbb Z^\dag_n)$ and assume that it has a
nontrivial element $0<c<1.$ Now we use again the J\'onsson Lemma,
and we assume that $C$ is a homomorphic image of a subalgebra $D$ of
an ultrapower $\mathcal U$ of   $\mathbb Z^\dag_n$ on the index set
$U.$ Let $d\in D$ be a preimage of $c$ under the homomorphism. We
show that $a=d^\sim$ generates a subalgebra of $D$ that is
isomorphic to $\mathbb Z^\dag_n.$ Then $a = \langle a_u: u \in
U\rangle/\mathcal U.$ Consider the set $V =\{u\in U: a^2_u =0\}$. If
$V \in \mathcal U,$ then $a$ generates in the same way as $\langle
1,\ldots, 1\rangle \in \mathbb (\mathbb Z^+)^n.$ If $V \notin \mathcal
U,$ we can use the generator like $\langle 0,\ldots,0,-1\rangle
\in \mathbb (\mathbb Z^-)^{n+1}.$

\medskip
\emph{Claim 4. Every $\mathsf{V}(\mathbb Z^\dag_{n})$
is a cover variety of the variety of Boolean algebras.}

\medskip
Let $\mathcal V$ be a subvariety of $\mathsf{V}(\mathbb Z^\dag_{n})$
containing properly the variety of Boolean algebras, and let $A$ be
a subdirectly irreducible algebra from $\mathcal V.$ By Claim 3, $A$
is either a two-element Boolean algebra or  contains an isomorphic
copy of $\mathbb Z^\dag_{n}.$  Therefore, in the later case, $A
\notin \mathsf{BA}$ and we have $\mathsf V(A) = \mathsf{V}(\mathbb
Z^\dag_{n})\subseteq \mathcal V$, which proves the statement of the
Theorem.

\medskip
\emph{Claim 5. If $n$ and $m$ are two different positive integers, then
$\mathsf{V}(\mathbb Z^\dag_{n}) \ne \mathsf{V}(\mathbb Z^\dag_{m}).$}

\medskip
Assume $n<m$.  Verifying  (4), we see that the
element $x=\langle-1,\ldots,-1 \rangle$ from $\mathbb Z^\dag_{m}$
does not belong to $\mathsf{V}(\mathbb Z^\dag_{n}).$
\end{proof}

The next result follows immediately from Lemma~\ref{good-kites},
Theorem~\ref{covers}, and the subdirect representation theorem.

\begin{cor}
An algebra $\mathbf{A}\in \mathsf V(\mathbb{Z}^\dag_n)$, for any $n\in\omega$,
is good if and only if $\mathbf{A}$ is a Boolean algebra.
\end{cor}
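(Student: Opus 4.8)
The plan is to derive this from the two ingredients that are explicitly cited: the characterization of good kites in Lemma~\ref{good-kites}(1), and the fact established in Claim~3 of the proof of Theorem~\ref{covers} that every subdirectly irreducible member of $\mathsf V(\mathbb Z^\dag_n)$ is either the two-element Boolean algebra or contains an isomorphic copy of $\mathbb Z^\dag_n$. First I would observe that one direction is trivial: every Boolean algebra is good, since in a Boolean algebra $\ln\rn a = a = \rn\ln a$ for all $a$, so if $\mathbf A$ is a Boolean algebra it is good. This holds whether or not $\mathbf A$ belongs to $\mathsf V(\mathbb Z^\dag_n)$, but in particular it holds for the members of that variety.

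For the converse, suppose $\mathbf A\in\mathsf V(\mathbb Z^\dag_n)$ is good but not a Boolean algebra. Goodness is expressed by the identity $\ln\rn x = \rn\ln x$, so it is preserved under $H$, $S$, and $P$, hence inherited by every subdirectly irreducible factor in a subdirect representation of $\mathbf A$. Since $\mathbf A$ is not Boolean, at least one subdirectly irreducible factor $C$ in such a representation is not the two-element Boolean algebra; being good and lying in $\mathsf V(\mathbb Z^\dag_n)$, by Claim~3 this $C$ contains an isomorphic copy of $\mathbb Z^\dag_n = K_{I_n,J_n}^{\lambda_n,\rho_n}(\mathbf Z)$, and goodness is again inherited by this subalgebra. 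So it suffices to show that $\mathbb Z^\dag_n$ is not good for any $n\ge 1$. This is exactly where Lemma~\ref{good-kites}(1) enters: $\mathbb Z^\dag_n$ is good iff $\lambda_n(J_n) = \rho_n(J_n)$, but $\lambda_n(J_n) = \{1,\dots,n\}$ while $\rho_n(J_n) = \{2,\dots,n+1\}$, and these are distinct as soon as $n\ge 1$. Hence $\mathbb Z^\dag_n$ is not good, contradicting the assumption. Therefore $\mathbf A$ must be Boolean.

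I do not expect a genuine obstacle here; the corollary really is an immediate consequence of the three cited results. The only point requiring a moment's care is the reduction to subdirectly irreducibles — one must note that goodness, being equational, passes to subdirect factors and to subalgebras — after which Claim~3 and the explicit computation of $\lambda_n(J_n)$ versus $\rho_n(J_n)$ finish the argument. I would write it as a short paragraph rather than a displayed multi-step proof.
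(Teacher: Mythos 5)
Your argument is correct and takes essentially the same route as the paper, which derives the corollary in one line from Lemma~\ref{good-kites}, Theorem~\ref{covers} (in particular its Claim~3), and the subdirect representation theorem---exactly the three ingredients you combine, with the key points being that goodness is an identity (hence passes to subdirect factors and subalgebras) and that $\lambda_n(J_n)=\{1,\dots,n\}\neq\{2,\dots,n+1\}=\rho_n(J_n)$ makes $\mathbb{Z}^\dag_n$ not good. The only cosmetic remark is the case $n=0$, where $\mathsf{V}(\mathbb{Z}^\dag_0)=\mathsf{BA}$ and the statement is trivial, so your restriction to $n\ge 1$ in that computation loses nothing.
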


\end{document}